\renewcommand{\headrulewidth}{0pt}
\def\R{\mathbb{R}}
\newfont{\aaa}{cmb10 at 19pt}
\newfont{\bbb}{cmb10 at 11pt}
\newtheorem{lemma}{Lemma}[section]
\newtheorem{theorem}{Theorem}[section]
\newtheorem{definition}{Definition}[section]
\newtheorem{remark}{Remark}[section]
\newtheorem{corollary}{Corollary}[section]
\newcommand{\beq}{\begin{equation}}
\newcommand{\eeq}{\end{equation}}
\newcommand{\bey}{\begin{eqnarray}}
\newcommand{\eey}{\end{eqnarray}}
\newcommand{\beyy}{\begin{eqnarray*}}
\newcommand{\eeyy}{\end{eqnarray*}}
\numberwithin{equation}{section}
\def\@oddfoot{}
\begin{document}
\thispagestyle{empty} \thispagestyle{fancy} {
\fancyhead[RO,LE]{\scriptsize \bf 
} \fancyfoot[CE,CO]{}}
\renewcommand{\headrulewidth}{0pt}

\begin{center}
{\bf \LARGE Two Generalizations of Stampacchia Lemma and Applications} 

\vspace{3mm}

{\small \textsc{HAN Yingxiao} \quad  \textsc{FANG Mi} \quad   \textsc {XIA Liuye} \quad \textsc{GAO Hongya}\footnote{Corresponding author, email: ghy@hbu.cn.}}

{\small College of Mathematics and Information Science, Hebei University, Baoding, 071002, China}
\end{center}

\begin{abstract} \noindent We present two generalizations of the classical Stampacchia Lemma which contain a non-decreasing
non-negative function $g$, and give applications. As a first application, we deal with variational integrals of the form
$$
{\cal J} (u;\Omega) = \int_{\Omega}\ f(x,Du{(x)})dx.
$$
We consider a minimizer $u: \Omega \subset \mathbb R^n \to \mathbb R $ among all functions with a fixed boundary value $u_{\ast }$
 on $\partial \Omega$. Under some nonstandard growth conditions of the integrand $f(x,\xi)$
we derive some regularity results; as a second application, we consider elliptic equations of the form
$$
\begin{cases}
-\mbox {div} \left( a(x, u(x)) D u(x) \right) = f(x), & x \in \Omega,  \\
 u(x) = 0, & x \in {\partial \Omega},
\end{cases}
$$
under the conditions
$$
\frac {\alpha }{(1+|s|) ^\theta \ln ^\theta (e+|s|)} \le a (x,s) \le \beta, \ \ \ 0<\alpha \le \beta <\infty, \ \theta \ge 0,
$$
we obtain some regularity properties of its weak solutions.

\noindent {\bf AMS Subject Classification (2020):}  35J20, 35J60

\noindent {\bf Keywords:} Generalized Stampacchia lemma, variational integral, elliptic equation, nonstandard growth, regularity.
\end{abstract}

\thispagestyle{fancyplain} \fancyhead{}
\fancyhead[L]{\textit{}\\
}


\section{Introduction and preliminaries.}
\noindent In the present paper we shall extend the classical Stampacchia Lemma to generalized versions which contain
a non-decreasing non-negative function $g$, and give some applications.
We first pay our attention to the classical Stampacchia Lemma, which can be found, for example, in \cite{Stampacchia}.
\begin{lemma}\label{Stampacchia Lemma}
Let $c, \alpha, \beta$ be positive constants and $k_0\in \R$. Let
$\varphi: [k_0,+\infty)$ $\rightarrow [0, +\infty)$ be non-increasing
and such that
\begin{equation}\label{stampacchia_assumption-1}
\varphi (h) \le \frac {c}{(h-k)^\alpha} [\varphi (k)]^\beta
\end{equation}
\noindent
for every $h,k$ with $h>k\ge k_0$. It results that:

{\bf (i)} if $\beta > 1$ then
\begin{equation*}
 \label{statement_stampacchia_beta>1}
\varphi(k_0 + d) = 0,
\end{equation*}
where
\begin{equation*}
 \label{d=}
d^\alpha =   c [\varphi(k_0)]^{\beta - 1} 2^{\frac {\alpha \beta} {\beta
- 1}};
\end{equation*}

 {\bf(ii)} if $\beta = 1$ then for any $k\ge k_0$,
\begin{equation*} \label{statement_stampacchia_beta=1}
\varphi (k) \le \varphi(k_0) e^{1 - (  c e)^{-\frac{1}{\alpha}} (k - k_0)};
\end{equation*}

 {\bf(iii)} if $0< \beta < 1$ and $k_0 > 0$ then for any $k\ge k_0$,
\begin{equation*}
 \label{statement_stampacchia_beta<1}
\varphi (k) \le
2^{\frac {\alpha}{(1-\beta)^2 }}
\left\{   c ^\frac {1}{1-\beta} +(2 k_0)^{\frac \alpha
{1-\beta}} \varphi(k_0) \right\}
\left(\frac 1 k \right) ^{\frac \alpha {1-\beta}}.
\end{equation*}
\end{lemma}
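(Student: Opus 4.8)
The plan is to handle all three parts by one device: iterate the recurrence \eqref{stampacchia_assumption-1} along a suitable increasing sequence $\{k_s\}_{s\ge 0}$ starting at $k_0$, bound $\varphi(k_s)$ by an explicit comparison sequence, and then either let $s\to\infty$ or interpolate in $k$, using only that $\varphi$ is non-increasing. Putting $h=k_{s+1}$ and $k=k_s$ in \eqref{stampacchia_assumption-1} is legitimate since $k_{s+1}>k_s\ge k_0$, and yields the basic one-step estimate $\varphi(k_{s+1})\le c\,(k_{s+1}-k_s)^{-\alpha}\varphi(k_s)^{\beta}$. The whole argument is then a matter of choosing the spacing of the $k_s$ so that this estimate closes under induction, and the constants in the statement will come out of those choices.

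For part (i) I would take the bounded sequence $k_s=k_0+d\,(1-2^{-s})$, so $k_s\uparrow k_0+d$ and $k_{s+1}-k_s=d\,2^{-s-1}$; the one-step estimate becomes $\varphi(k_{s+1})\le c\,d^{-\alpha}2^{\alpha(s+1)}\varphi(k_s)^{\beta}$, and I would prove by induction that $\varphi(k_s)\le\varphi(k_0)\,2^{-\mu s}$ with $\mu=\alpha/(\beta-1)>0$. After collecting powers of $2$, the inductive step reduces to the single inequality $c\,\varphi(k_0)^{\beta-1}d^{-\alpha}2^{\alpha\beta/(\beta-1)}\le 1$, which is precisely the definition of $d$; hence $\varphi(k_0+d)\le\varphi(k_s)\to 0$ because $k_s<k_0+d$. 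For part (ii) the multiplicative structure collapses ($\beta=1$): with the arithmetic sequence $k_s=k_0+s\delta$, $\delta=(ce)^{1/\alpha}$, the one-step estimate is the contraction $\varphi(k_{s+1})\le e^{-1}\varphi(k_s)$, so $\varphi(k_0+s\delta)\le e^{-s}\varphi(k_0)$; for arbitrary $k\ge k_0$ take $s=\lfloor(k-k_0)/\delta\rfloor$, use $\varphi(k)\le\varphi(k_0+s\delta)$ and $s>(k-k_0)/\delta-1$, and the displayed exponential bound follows.

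Part (iii) is where the argument genuinely changes, and it is the step I expect to be the main obstacle. When $0<\beta<1$ the recurrence no longer forces $\varphi$ to vanish or to decay exponentially from a fixed point, so a bounded $\{k_s\}$ is useless; instead I would iterate along the \emph{geometric} sequence $k_s=2^{s}k_0$ (the hypothesis $k_0>0$ is essential here, both so that $k_s\uparrow\infty$ and so that $(1/k)^{\alpha/(1-\beta)}$ is meaningful). Then $k_{s+1}-k_s=2^{s}k_0$ and the one-step estimate reads $\varphi(2^{s+1}k_0)\le c\,(2^{s}k_0)^{-\alpha}\varphi(2^{s}k_0)^{\beta}$; writing $\Psi(k)=k^{\alpha/(1-\beta)}\varphi(k)$ and using $\alpha+\alpha\beta/(1-\beta)=\alpha/(1-\beta)$ turns this into the \emph{scale-invariant} recurrence $\Psi(2^{s+1}k_0)\le c\,2^{\alpha/(1-\beta)}\,\Psi(2^{s}k_0)^{\beta}$, from which a one-line induction gives the uniform bound $\Psi(2^{s}k_0)\le\big(c\,2^{\alpha/(1-\beta)}\big)^{1/(1-\beta)}+\Psi(k_0)$ for all $s$; the inductive step is just $N^{1-\beta}\ge c\,2^{\alpha/(1-\beta)}$ with $N$ the right-hand side, and $\big(c\,2^{\alpha/(1-\beta)}\big)^{1/(1-\beta)}=c^{1/(1-\beta)}2^{\alpha/(1-\beta)^{2}}$ is already the constant in the statement. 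For general $k\ge k_0$ one picks $s$ with $2^{s}k_0\le k<2^{s+1}k_0$ and transfers the dyadic estimate to $k$ by monotonicity of $\varphi$. The delicate point is exactly this transfer: the interpolation bookkeeping (together with treating the initial block $k_0\le k<2k_0$ directly, and the trivial case $\varphi(k_0)=0$) must be carried out carefully enough to land on the stated constant $2^{\alpha/(1-\beta)^{2}}\{c^{1/(1-\beta)}+(2k_0)^{\alpha/(1-\beta)}\varphi(k_0)\}(1/k)^{\alpha/(1-\beta)}$ rather than merely on a bound of the same shape.
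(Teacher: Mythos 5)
The paper does not actually prove Lemma~\ref{Stampacchia Lemma}; it is quoted from \cite{Stampacchia}, and the paper's own closely analogous proofs are those of Theorem~\ref{Stampacchia Lemma, Gao Zhang and Ma generalized,generalized} and Lemma~\ref{Stampacchia Lemma, generalization}. Your device is the same as theirs: geometrically spaced levels feeding a supergeometric iteration for (i), arithmetic levels giving an $e^{-1}$-contraction for (ii), and dyadic levels with the scale-invariant normalization $\Psi(k)=k^{\alpha/(1-\beta)}\varphi(k)$ for (iii). Parts (i) and (ii) of your argument are correct and complete; in (i) you replace the paper's appeal to Giusti's iteration lemma (Lemma~\ref{Stampacchia Lemma proof need}) by a direct induction with the Ansatz $\varphi(k_s)\le\varphi(k_0)2^{-\mu s}$, $\mu=\alpha/(\beta-1)$, which closes exactly when $d^\alpha\ge c\,\varphi(k_0)^{\beta-1}2^{\alpha\beta/(\beta-1)}$ — a slightly more elementary variant of the same idea.

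In (iii), as you yourself warn, the bookkeeping does \emph{not} land on the displayed constant. With $\gamma=\alpha/(1-\beta)$ and $\Gamma=\alpha/(1-\beta)^2$, your interpolation step ($2^s k_0\le k<2^{s+1}k_0$, hence $(2^sk_0)^{-\gamma}<2^\gamma k^{-\gamma}$) gives
\begin{equation*}
\varphi(k)\le\Bigl(c^{\frac{1}{1-\beta}}2^{\Gamma+\gamma}+2^{\gamma}k_0^{\gamma}\varphi(k_0)\Bigr)k^{-\gamma},
\end{equation*}
whereas the statement asserts
\begin{equation*}
\varphi(k)\le\Bigl(c^{\frac{1}{1-\beta}}2^{\Gamma}+2^{\Gamma+\gamma}k_0^{\gamma}\varphi(k_0)\Bigr)k^{-\gamma}.
\end{equation*}
The first summand in your bound is larger by the factor $2^{\gamma}$, so your estimate is of the right \emph{shape} but does not imply the stated inequality. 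This is not a flaw peculiar to your write-up: the paper's own Theorem~\ref{Stampacchia Lemma, Gao Zhang and Ma generalized,generalized}(iii), specialized to $g(t)=t$ and $\theta=0$, gives the even larger constant $2^{\Gamma+\gamma}\bigl(c^{1/(1-\beta)}+k_0^{\gamma}\varphi(k_0)\bigr)$, also strictly worse than the classical one in the first term. So the pure dyadic route you (and the paper) use recovers Stampacchia's estimate up to a power of~$2$, which is all that the regularity applications require, but reproducing the exact constant of Lemma~\ref{Stampacchia Lemma} needs a different accounting (e.g.\ Stampacchia's original argument or the alternative proof in \cite{GHR}). You should either redo the final interpolation to absorb the extra $2^{\gamma}$, or state explicitly that you prove the inequality with a possibly larger (but still explicit) constant.
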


Stampacchia Lemma is a fundamental tool in dealing with regularity issues of solutions of elliptic partial differential equations as
well as minima of variational integrals, and is used till now repeatedly by many mathematicians, see \cite{Boccardo,Boccardo-Croce,Kovalevsky,Kovalevskii,GHR,GZM,GLW}. In \cite{GDHR},
the authors gave two generalizations of Stampacchia lemma and their applications to quasilinear elliptic systems; in \cite{Kovalevskii},
Kovalevskii and Voitovich dealt with a similar condition, but includes an additional factor on the right hand side; in \cite{GHR}, Gao, Huang and Ren gave an
alternative proof which seems to be more elementary; in \cite{GZM}, Gao, Zhang and
Ma generalized the Stampacchia lemma with an additional factor $h^{\theta \alpha } $ on the right hand side and provided applications to degenerate elliptic equations.
For some remarks on Stampacchia Lemma we refer to \cite{GLW}. For some other generalizations and applications we refer to \cite{GZH}.

For convenience of the reader, we list below a generalization of Stampacchia Lemma due to Gao, Zhang and Ma, see \cite{GZM}, which can be
used in dealing with divergence type degenerate elliptic equations.

\begin{lemma}\label{Stampacchia Lemma, Gao Zhang and Ma generalized}
Let $c, \alpha, \beta, k_0$ be positive constants and $0\le \theta < 1$. Let
$\varphi: [k_0,+\infty)$ $\rightarrow [0, +\infty)$ be non-increasing
and such that
\begin{equation}\label{stampacchia_assumption-2}
\varphi (h) \le \frac {ch^{\theta \alpha } }{(h-k)^\alpha} [\varphi (k)]^\beta
\end{equation}
\noindent
for every $h,k$ with $h>k\ge k_0>0$. It results that:
	
{\bf (i)} if $\beta > 1$ then
\begin{equation*}
\label{statement_stampacchia_beta>1}
\varphi(2L) = 0,
\end{equation*}
where
\begin{equation*}\label{the value of L}
L= \max \left\{2k_0,  c^{\frac 1 {(1-\theta ) \alpha}}  [\varphi (k_0)] ^{\frac {\beta -1}{ (1-\theta) \alpha}}
2 ^{\frac 1 {(1-\theta)\beta} \left(\beta +\theta +\frac 1 {\beta -1}\right)} \right\} >0;
\end{equation*}
	
{\bf(ii)} if $\beta = 1$ then for any $k\ge k_0$,
\begin{equation*} \label{generalied statement_stampacchia_beta=1}
\varphi (k)\le \varphi(k_0) e ^{1-\left( \frac {k-k_0}{\tau} \right) ^{1-\theta }},
\end{equation*}
where
\begin{equation*}\label{tau11}
\tau =\max \left\{k_0,\left(ce2^{\theta \alpha} \right) ^{\frac 1 {(1-\theta)\alpha}}, \left( ce 2^{\frac {(2-\theta ) \theta \alpha }{1-\theta }} (1- \theta ) ^\alpha \right)^{\frac 1 {(1- \theta ) \alpha}} \right\};
\end{equation*}
	
{\bf(iii)} if $0< \beta < 1$ then for any $k\ge k_0$,
\begin{equation*} \label{statement_stampacchia_beta<1}
\varphi (k) \le 2^{\frac { (1-\theta)\alpha}{(1-\beta)^2 }} \left\{ (c_1 2^{\theta \alpha})^\frac {1}{1-\beta} +(2 k_0)
^{\frac {(1-\theta)\alpha} {1-\beta}} \varphi(k_0) \right\} \left(\frac 1 k \right) ^{\frac
{\alpha(1-\theta)} {1-\beta}},
\end{equation*}
where
\begin{equation*} \label{c1andc2}
c_1=\max \left\{ 4^{(1-\theta) \alpha} c 2^{\theta \alpha}, c_2^{1-\beta} \right\}, \ \ c_2= 2 ^{\frac {(1-\theta )\alpha}
{(1-\beta)^2 }} \left[(c2^{\theta \alpha})^{\frac 1 {1-\beta}} + (2k_0) ^{\frac {(1-\theta)\alpha}
{1-\beta}} \varphi (k_0)\right].
\end{equation*}
\end{lemma}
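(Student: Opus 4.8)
\noindent\emph{Proof proposal.} The plan is to handle the three cases by the classical Stampacchia-type iteration, the only novelty being that the extra factor $h^{\theta\alpha}$ must be controlled uniformly along the chosen sequence. In each case one picks an increasing sequence $k_0\le k_1\le k_2\le\cdots$ contained in $[k_0,+\infty)$, applies hypothesis~\eqref{stampacchia_assumption-2} with $h=k_{m+1}$, $k=k_m$ to get a recursion for $a_m:=\varphi(k_m)$, feeds that recursion into an elementary lemma on scalar sequences, and finally upgrades the conclusion from the points $k_m$ to all $k\ge k_0$ using that $\varphi$ is non-increasing.

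For case~(i) ($\beta>1$) I would work on the interval $[L,2L]$, which lies in $[k_0,+\infty)$ since $L\ge 2k_0$. There $h^{\theta\alpha}\le(2L)^{\theta\alpha}$, so \eqref{stampacchia_assumption-2} implies the classical inequality \eqref{stampacchia_assumption-1} with $k_0$ replaced by $L$ and $c$ replaced by $c(2L)^{\theta\alpha}$; since the iteration behind Lemma~\ref{Stampacchia Lemma}(i) at the base point $L$ only uses the hypothesis on $[L,L+d]$ and the choice of $L$ is exactly what forces $d\le L$, it stays inside $[L,2L]$ and gives $\varphi(L+d)=0$, hence $\varphi(2L)\le\varphi(L+d)=0$. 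Equivalently one may iterate directly along $k_m=2L-L\,2^{-m}$: \eqref{stampacchia_assumption-2} then yields $a_{m+1}\le C\,2^{\alpha m}a_m^{\beta}$ with $C=c\,2^{(1+\theta)\alpha}L^{-(1-\theta)\alpha}$, and the standard lemma on super-geometric sequences forces $a_m\to0$ as soon as $a_0=\varphi(L)\le\varphi(k_0)\le C^{-1/(\beta-1)}2^{-\alpha/(\beta-1)^2}$, which is precisely the lower bound on $L$ in the statement (the exact exponents of $2$ come from optimizing the ratio of successive increments).

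For case~(iii) ($0<\beta<1$, $k_0>0$) the natural choice is the dyadic sequence $k_m=2^{m}k_0$. With $h=k_{m+1}=2k_m$, $k=k_m$, \eqref{stampacchia_assumption-2} becomes, for $b_m:=\varphi(2^{m}k_0)$, a recursion of the shape $b_{m+1}\le D\,2^{-(1-\theta)\alpha m}\,b_m^{\beta}$ with $D=c\,2^{\theta\alpha}k_0^{-(1-\theta)\alpha}$. I would then prove by induction the ansatz $b_m\le A\rho^m$ with $\rho:=2^{-(1-\theta)\alpha/(1-\beta)}$: the inductive step needs $D\,A^{\beta-1}\rho^{-1}\le1$, i.e.\ $A\ge D^{1/(1-\beta)}2^{(1-\theta)\alpha/(1-\beta)^2}$ — this is where the exponent $(1-\beta)^2$ and the quantity $c_2$ enter — and the base step needs $A\ge\varphi(k_0)$. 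Since $2^{m}=k_m/k_0$ this reads $\varphi(k_m)\le A(k_m/k_0)^{-(1-\theta)\alpha/(1-\beta)}$; for $k\in[k_m,k_{m+1})$ with $m\ge1$ one concludes from $\varphi(k)\le\varphi(k_m)$ and $k_m>k/2$, while the leftover range $k\in[k_0,2k_0)$ is absorbed into the term $(2k_0)^{(1-\theta)\alpha/(1-\beta)}\varphi(k_0)$ (this is exactly where $k_0>0$ is used). Matching the crude constant obtained this way with the sharper one in the statement presumably costs one bootstrap: a rough power bound reinserted into \eqref{stampacchia_assumption-2}, which is what the nested definition of $c_1$ in terms of $c_2$ reflects.

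Case~(ii) ($\beta=1$) is, I expect, the delicate one, because the factor $h^{\theta\alpha}$ destroys the constant step size of the classical argument: to guarantee a gain $\varphi(k_{m+1})\le e^{-1}\varphi(k_m)$ at every step one needs $c\,k_{m+1}^{\theta\alpha}\le e^{-1}(k_{m+1}-k_m)^{\alpha}$, i.e.\ growing increments $k_{m+1}-k_m\gtrsim(m+1)^{\theta/(1-\theta)}$. With a sequence arranged so that $k_m-k_0\sim\tau\,m^{1/(1-\theta)}$ one gets $\varphi(k_m)\le e^{-m}\varphi(k_0)$, and the real work is to compare the partial sums $k_m-k_0=\sum_{j<m}(k_{j+1}-k_j)$ with the integral of $t\mapsto t^{\theta}$ — in effect to discretize the ODE $\dot k=(ce)^{1/\alpha}k^{\theta}$ — tightly enough that only the powers of $2$ appearing in $\tau$ are lost, and so that interpolation over $k\in[k_m,k_{m+1})$ produces the factor $e^{1-((k-k_0)/\tau)^{1-\theta}}$ with the stated $\tau$ (the three terms in the maximum defining $\tau$ correspond respectively to forcing $k_1\ge2k_0$, to controlling the first increment, and to the asymptotic balance in the sum). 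In all three cases the main obstacle is thus not conceptual but the uniform bookkeeping of the factor $h^{\theta\alpha}$ and the extraction of the explicit constants $L$, $\tau$, $c_1$, $c_2$; case~(ii) is where this bookkeeping is heaviest.
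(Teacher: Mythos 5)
Your proposal follows the same scheme as the paper's own proof of the $g$-analogue (Theorem \ref{Stampacchia Lemma, Gao Zhang and Ma generalized,generalized}, whose proof is the prototype for this lemma): geometric levels $t_i=2L(1-2^{-i-1})$ plus the iteration Lemma \ref{Stampacchia Lemma proof need} for $\beta>1$; levels $k_s=k_0+\tau s^{1/(1-\theta)}$ with Taylor/integral comparison for $\beta=1$; dyadic levels $h=2k$ with a self-similar ansatz (equivalently the paper's $\psi$-substitution) for $0<\beta<1$. The structure is correct and matches.

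There is, however, one concrete imprecision in case (i) that you should be aware of, since you claim to recover ``precisely'' the $L$ in the statement. You bound $\varphi(L)\le\varphi(k_0)$ by monotonicity before invoking the iteration lemma. With $C=c\,2^{(1+\theta)\alpha}L^{-(1-\theta)\alpha}$ and $B=2^{\alpha}$, the smallness condition $\varphi(k_0)\le C^{-1/(\beta-1)}B^{-1/(\beta-1)^2}$ then gives a threshold for $L$ with exponent of $2$ equal to $\frac{1+\theta+1/(\beta-1)}{1-\theta}$, which is \emph{strictly larger} than the exponent $\frac{\beta+\theta+1/(\beta-1)}{(1-\theta)\beta}$ appearing in the statement (the difference of the numerators, after clearing the factor $\beta$, is $\theta(\beta-1)+1>0$). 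Consequently your argument proves $\varphi(2L')=0$ only for a strictly larger $L'$, and since $\varphi$ is non-increasing this does not imply $\varphi(2L)=0$ for the stated $L$. The paper obtains the sharper constant by \emph{not} discarding information at this step: it first applies \eqref{stampacchia_assumption-2} with $k=k_0$, $h=L$ (and $L\ge 2k_0$, so $L-k_0\ge L/2$) to get
\[
\varphi(L)\le \frac{cL^{\theta\alpha}2^{\alpha}}{L^{\alpha}}\,[\varphi(k_0)]^{\beta}=c\,2^{\alpha}L^{-(1-\theta)\alpha}[\varphi(k_0)]^{\beta},
\]
and only then imposes that this quantity be $\le C^{-1/(\beta-1)}B^{-1/(\beta-1)^2}$; the extra decaying factor $L^{-(1-\theta)\alpha}$ and the power $\beta$ on $\varphi(k_0)$ are exactly what shrink the resulting threshold to the one in the statement. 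Cases (ii) and (iii) you have only sketched, and the nested constants $\tau,c_1,c_2$ would need the bookkeeping you describe; the ideas there are correct.
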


We note that, the difference between (\ref{stampacchia_assumption-1}) and (\ref{stampacchia_assumption-2}) lies in the fact that there is a factor $h^{\theta \alpha}$ in the numerator of the right hand side of (\ref{stampacchia_assumption-2}). This is convenient when dealing with degenerate elliptic equations, see \cite{GZM}.

We now ask two questions: How are the situations when (\ref{stampacchia_assumption-2}) is replaced by
$$
\varphi (h) \le \frac {c h ^{\theta \alpha}}{(h-k)^\alpha \ln ^\alpha (e+h-k)} [\varphi (k)]^\beta, \ \ \forall h>k\ge k_0,
$$
or
$$
\varphi (h) \le \frac {c h^{\theta \alpha }\ln ^{\theta \alpha }(e+h) }{(h-k)^\alpha} [\varphi (k)]^\beta,   \ \ \forall h>k\ge k_0 \ ?
$$
In the present paper we shall deal with such situations. More precisely, we shall consider some more general cases, that is,
(\ref{stampacchia_assumption-2}) is replaced by
\begin{equation}\label{the first condition}
\varphi (h) \le \frac {ch^{\theta \alpha }}{g^\alpha (h-k)} [\varphi (k)]^\beta, \ \ \forall h>k\ge k_0,
\end{equation}
or
\begin{equation}\label{the second condition}
\varphi (h) \le \frac {c g^{\theta \alpha }(h) }{(h-k)^\alpha} [\varphi (k)]^\beta,   \ \ \forall h>k\ge k_0,
\end{equation}
where the function $g:[0,+\infty)\to [0,+\infty)$ be a non-decreasing function satisfying suitable conditions.

We next introduce such a function $g$ and its properties in details: let $g:[0,+\infty)\to [0,+\infty)$ be a
function satisfying the following assumptions:


(${\cal G}_1$) $g$ be a $C^1$, convex and non-decreasing function, with $g(0)=0$ and $g(t)>0$ for all $t>0$;

(${\cal G}_2$) there exists a constant $\mu \ge 1$ such that for every $\lambda > 1 $ and every $t>0$,
\begin{equation}\label{condition for g-1}
g(\lambda t) \leq \lambda^{\mu } g(t);
\end{equation}


(${\cal G}_3$) $g'(0_+)>0$.

We notice that the above condition (\ref{condition for g-1}) means that, although the function $g$ is non-decreasing, it cannot grow too fast.
An example for a function $g$ satisfying (${\cal G}_1$), (${\cal G}_2$) and (${\cal G}_3$) is
\begin{equation}\label{condition for g-2}
g(t) =t \ln (e+t), \ \ t\ge 0.
\end{equation}
Such a function $g$ belongs to $C^1$, be convex and non-decreasing, satisfying $g(0)=0$ and $g(t)>0$ for every $t>0$; in order to show that $g$
satisfies (\ref{condition for g-1}) 
we note that 
\begin{equation}\label{t1.2}
g(\lambda t) \leq \lambda^{\mu } g(t) \Longleftrightarrow e+\lambda t  \le (e+t) ^{\lambda ^{\mu-1}}.
\end{equation}
Since $e+\lambda t \le (e+t)^\lambda $ for every $\lambda > 1 $ and every $t>0$, then the right hand side inequality of (\ref{t1.2})
holds true for $\mu =2$ as well; 
it is obvious that $g'(0_+) =1>0$.

\vspace{2mm}

For a function $g$ satisfying the assumptions (${\cal G}_1$) and (${\cal G}_2$), we have the following lemmas:

\begin{lemma}
Consider $g: [0,+\infty)\rightarrow [0,+\infty)$ of class $C^1$, convex, non-decreasing, $g(0)=0$, and satisfying (\ref{condition for g-1}). Then for all $t\ge 0$,
\begin{equation}\label{condition for g-3}
g'(t)t \le \mu g(t).
\end{equation}
\end{lemma}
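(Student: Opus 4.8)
The plan is to exploit the convexity of $g$ together with the scaling bound (\ref{condition for g-1}). Since $g$ is convex, differentiable, and $g(0)=0$, the chord from $0$ to $t$ lies below the graph, so the tangent at $t$ lies above the chord; equivalently, for fixed $t>0$ the function $\lambda \mapsto g(\lambda t)$ is convex in $\lambda$, which forces its difference quotient at $\lambda = 1$ to be controlled by the right-hand behaviour. Concretely, I would start from the hypothesis $g(\lambda t) \le \lambda^{\mu} g(t)$ for all $\lambda > 1$ and $t > 0$, subtract $g(t)$ from both sides, divide by $(\lambda - 1)t$, and let $\lambda \to 1^{+}$.

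First I would fix $t > 0$ and write, for $\lambda > 1$,
\begin{equation*}
\frac{g(\lambda t) - g(t)}{(\lambda - 1) t} \le \frac{(\lambda^{\mu} - 1) g(t)}{(\lambda - 1) t}.
\end{equation*}
The left-hand side is a difference quotient of $g$ at the point $t$ with increment $(\lambda-1)t \to 0^{+}$, so by differentiability it tends to $g'(t)$ as $\lambda \to 1^{+}$. On the right-hand side, $\dfrac{\lambda^{\mu}-1}{\lambda - 1} \to \mu$ as $\lambda \to 1^{+}$ (this is just the derivative of $\lambda \mapsto \lambda^{\mu}$ at $\lambda = 1$, or a routine limit). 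Hence passing to the limit gives $g'(t) \le \dfrac{\mu g(t)}{t}$, i.e. $g'(t)\, t \le \mu g(t)$ for every $t > 0$.

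It remains to handle $t = 0$. Since $g(0) = 0$, the inequality $g'(0)\cdot 0 \le \mu g(0)$ reads $0 \le 0$, which is trivially true (and no hypothesis on $g'(0_{+})$ is even needed here, though by convexity $g'$ extends continuously from the right at $0$). This completes the argument. I do not anticipate a genuine obstacle; the only point requiring a little care is justifying the interchange of the limit $\lambda \to 1^{+}$ with the inequality, which is immediate because both sides are continuous in $\lambda$ on $(1, \infty)$ and the inequality is preserved under limits — and the left side genuinely converges to $g'(t)$ precisely because $g \in C^{1}$. One could alternatively phrase the whole proof without limits by noting that $h(\lambda) := \lambda^{\mu} g(t) - g(\lambda t)$ satisfies $h(1) = 0$ and $h(\lambda) \ge 0$ for $\lambda > 1$, so $h'(1^{+}) \ge 0$, which unwinds to exactly (\ref{condition for g-3}); I would probably present it in the difference-quotient form above since it is the most transparent.
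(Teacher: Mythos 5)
Your proof is correct and is essentially the same argument as the paper's: both fix $t>0$, form the difference quotient of $g$ at $t$ from the right, bound it via $g(\lambda t)\le\lambda^\mu g(t)$, and pass to the limit (the paper writes $\lambda = 1+\varepsilon/t$ and sends $\varepsilon\to 0^+$, you write $\lambda\to 1^+$; these are the same computation), then handle $t=0$ by $g(0)=0$. The only cosmetic difference is that your opening remarks about convexity are not actually used in the argument — neither your proof nor the paper's needs convexity here, only $g\in C^1$, $g(0)=0$, and the scaling bound.
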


\begin{proof}
For all $t>0$ and all $\varepsilon>0$, (\ref{condition for g-1}) gives
$$
\frac {g(t+\varepsilon) -g(t)}{\varepsilon} =\frac {g\left( t\left( 1+\frac \varepsilon t  \right)\right)-g(t)}{\varepsilon} \le
\frac {\left[ \left( 1+\frac \varepsilon t\right) ^\mu -1\right]g(t)}{\varepsilon}.
$$
Letting $\varepsilon \rightarrow 0_+ $ in both sides we get
\begin{equation*}\label{d-1}
g'(t_+) t \le \mu g(t)
\end{equation*}
for all $t>0$ since
$$
\lim_{\varepsilon \rightarrow 0_+}\frac {g(t+\varepsilon) -g(t)}{\varepsilon} =g'(t_+),\ \ \  \lim_{\varepsilon \rightarrow 0_+} \frac {\left[ \left( 1+\frac \varepsilon t\right) ^\mu -1\right]}{\varepsilon} =\frac \mu t.
$$
We use the fact $g\in C^1$ and then  (\ref{condition for g-3}) holds true for all $t>0$, and by continuity, for all $t\ge 0$.
\end{proof}

\begin{lemma}\label{new lemma}  Let $g: [0,+\infty)\rightarrow [0,+\infty)$ be of class $C^1$, convex and non-decreasing. Then for any $t_1,t_2\ge 0$,
\begin{equation}\label{new inequality}
g'(t_1)t_2 \le g'(t_1)t_1 +g'(t_2) t_2.
\end{equation}
\end{lemma}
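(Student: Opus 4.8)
The plan is to reduce the claimed inequality $g'(t_1)t_2 \le g'(t_1)t_1 + g'(t_2)t_2$ to a single-variable comparison by distinguishing two cases according to the order of $t_1$ and $t_2$. First I would treat the case $t_2 \le t_1$: since $g$ is convex, $g'$ is non-decreasing, so $g'(t_2) \ge 0$ and hence $g'(t_2)t_2 \ge 0$; it then suffices to observe that $g'(t_1)t_2 \le g'(t_1)t_1$ because $g'(t_1)\ge 0$ (again by monotonicity of $g'$ together with $g' \ge 0$ coming from $g$ non-decreasing) and $t_2 \le t_1$. This case is immediate.

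The substantive case is $t_2 > t_1$. Here I would subtract $g'(t_1)t_1$ from both sides, so the goal becomes $g'(t_1)(t_2 - t_1) \le g'(t_2)t_2$. The key point is that convexity gives the monotonicity $g'(t_1) \le g'(t_2)$, whence $g'(t_1)(t_2 - t_1) \le g'(t_2)(t_2 - t_1) \le g'(t_2)t_2$, the last step using $t_2 - t_1 \le t_2$ and $g'(t_2)\ge 0$. So in fact the inequality follows in both cases purely from the two facts that $g'$ is non-negative (because $g$ is non-decreasing) and $g'$ is non-decreasing (because $g$ is convex and $C^1$), without needing $g(0)=0$ or the growth condition (\ref{condition for g-1}).

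I do not expect any serious obstacle here; the only mild care needed is to make sure $g'$ is genuinely defined and non-negative everywhere on $[0,+\infty)$ — which is guaranteed by the hypothesis that $g$ is $C^1$ and non-decreasing — and, if one wants to be scrupulous at the endpoint $t=0$, to interpret $g'(0)$ as the one-sided derivative $g'(0_+)$, which exists and is finite by the $C^1$ assumption. Thus the proof is a short two-case argument; I would present it in roughly five or six lines.
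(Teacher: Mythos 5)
Your proof is correct and takes essentially the same route as the paper: a two-case split on the order of $t_1,t_2$, using only that $g'$ is non-negative (from $g$ non-decreasing) and non-decreasing (from $g$ convex and $C^1$). The paper handles your second case a touch more directly — from $t_1\le t_2$ it bounds $g'(t_1)t_2\le g'(t_2)t_2$ at once rather than subtracting $g'(t_1)t_1$ first — but the argument is the same in substance.
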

\begin{proof}
Since $g(t)$ be a $C^1$, convex and non-decreasing function, then $g'(t)$ exists and to be non-negative and non-decreasing. If $t_1\le t_2$, then
$g'(t_1)t_2 \le g'(t_2) t_2$; if $t_1>t_2$, then $g'(t_1)t_2 \le g'(t_1)t_1$. In both cases we have (\ref{new inequality}).
\end{proof}

The following lemma  can be found in \cite{Cupini-Marcellini-Mascolo-1}, see also \cite{Cupini-Marcellini-Mascolo-2,Cupini-Marcellini-Mascolo-3}.

\begin{lemma}\label{lemma 2}
Let $v \in W^{1,1} (\Omega)$ and let $g: [0,+\infty) \rightarrow [0,+\infty)$ be of class $C^1$, convex, non-decreasing, non-constant,
$g(0) = 0$ and $g(\lambda t )\le \lambda ^\mu g(t)$ for some $\mu \ge 1$, every $\lambda >1$ and every $t\ge 0$. Suppose that
$g(|Dv|) \in L_{loc} ^p(\Omega)$, then $g(|v|) \in L_{loc} ^{p*} (\Omega)$. 
\end{lemma}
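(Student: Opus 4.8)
Since the statement is local, I would replace $v$ by $|v|\in W^{1,1}(\Omega)$ (so that $\left|D|v|\right|\le|Dv|$ a.e.) and assume $v\ge0$; I also assume $p<n$, the cases $p\ge n$ being similar and easier. The cornerstone is a gradient estimate for the truncations $v^{(j)}:=\min\{v,j\}$, for which $g(v^{(j)})$ is bounded and lies in $W^{1,1}_{loc}(\Omega)$: the chain rule gives $\left|Dg(v^{(j)})\right|\le g'(v^{(j)})\,|Dv|$ a.e., and applying Lemma~\ref{new lemma} with $t_1=v^{(j)}$ and $t_2=|Dv|$, followed by (\ref{condition for g-3}), yields
\begin{equation*}
\left|Dg(v^{(j)})\right|\ \le\ g'(v^{(j)})\,v^{(j)}+g'(|Dv|)\,|Dv|\ \le\ \mu\,g(v^{(j)})+\mu\,g(|Dv|)\qquad\text{a.e.\ in }\Omega .
\end{equation*}

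The next step — the one that does not reduce to a bare application of Sobolev's inequality — is to show $g(|v|)\in L^1_{loc}(\Omega)$. On a ball $B_\rho\subset\subset\Omega$ one has the classical pointwise bound $|v(x)-(v)_{B_\rho}|\le c_n\int_{B_\rho}|Dv(y)|\,|x-y|^{1-n}\,dy$; composing with $g$, pulling the constant out by means of (${\cal G}_2$), and using the convexity of $g$ through Jensen's inequality with respect to the probability measure obtained by normalizing the (integrable) Riesz kernel on $B_\rho$, one obtains $\int_{B_\rho}g\!\left(|v-(v)_{B_\rho}|\right)\le C(n,\mu,\rho)\int_{B_\rho}g(|Dv|)<\infty$. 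Together with $g(a+b)\le 2^{\mu}\big(g(a)+g(b)\big)$ (again from (${\cal G}_2$)) and $g\!\left(|(v)_{B_\rho}|\right)<\infty$ (because $v\in L^1(B_\rho)$), this gives $\int_{B_\rho}g(|v|)<\infty$, i.e. $g(|v|)\in L^1_{loc}(\Omega)$. Letting now $j\to\infty$ in the displayed estimate — $g(v^{(j)})\uparrow g(v)$, while $g'(v^{(j)})\,Dv\,\mathbf 1_{\{v<j\}}\to g'(v)\,Dv$, dominated by the $L^1_{loc}$ function $\mu\,g(v)+\mu\,g(|Dv|)$ just found — I obtain $g(|v|)\in W^{1,1}_{loc}(\Omega)$ with
\begin{equation*}
\left|Dg(|v|)\right|\ \le\ \mu\,g(|v|)+\mu\,g(|Dv|)\qquad\text{a.e.\ in }\Omega .
\end{equation*}

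Finally I would run a Sobolev bootstrap on this differential inequality. If $g(|v|)\in L^{q}_{loc}(\Omega)$, the bound above gives $Dg(|v|)\in L^{\min\{q,p\}}_{loc}$, hence $g(|v|)\in W^{1,\min\{q,p\}}_{loc}$ and so $g(|v|)\in L^{(\min\{q,p\})^{*}}_{loc}$ by the Sobolev embedding. If $q\ge p$ this is already $L^{p^{*}}_{loc}$ and the proof is complete; if $q<p$ the exponent strictly improves to $q^{*}=nq/(n-q)$, so iterating from $q_0=1$, whose iterates satisfy $1/q_k=1-k/n$, one reaches some $q_k\ge p$ after finitely many steps (with the obvious modifications in the borderline cases), which finishes the argument.

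The step I expect to be the real obstacle is the base case $g(|v|)\in L^1_{loc}$: on the naive route, applying Sobolev directly to $g(v^{(j)})$ only produces $\big\|g(v^{(j)})\big\|_{L^{p^{*}}}\lesssim\big\|g(v^{(j)})\big\|_{L^{p}}+\big\|g(|Dv|)\big\|_{L^{p}}$, whose first term is not controlled uniformly in $j$, so the estimate is circular. Breaking the circle needs precisely the two ingredients above — a Poincar\'e-type inequality in the Orlicz scale to land the base case in $L^1_{loc}$, and the observation that the Sobolev gain is exactly enough to climb from $L^1_{loc}$ up to $L^{p^{*}}_{loc}$ in finitely many steps — together with the care, built into the truncation and the dominated passage to the limit, required to ensure that $g(|v|)$ itself (and not merely its truncations) belongs to $W^{1,1}_{loc}$.
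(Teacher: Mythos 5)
The paper states this lemma without proof, attributing it to \cite{Cupini-Marcellini-Mascolo-1}, so there is no in-text argument to compare against; your proof is correct and is precisely the argument the paper's own preliminaries are set up to support. The chain $|Dg(v^{(j)})|\le g'(v^{(j)})|Dv|\le g'(v^{(j)})v^{(j)}+g'(|Dv|)|Dv|\le\mu\, g(v^{(j)})+\mu\, g(|Dv|)$, obtained from Lemma~\ref{new lemma} followed by (\ref{condition for g-3}), is exactly the combination the authors deploy when establishing (\ref{No.2}) inside the proof of Theorem~\ref{theorem 3.1}, which confirms that this is the intended reading of the cited result. You also correctly identified that a direct Sobolev bound on $g(v^{(j)})$ would be circular, and the Riesz-potential/Jensen step (pull the factor $c_n\int_{B_\rho}|x-y|^{1-n}\,dy$, which is bounded above and below by multiples of $\rho$ uniformly for $x\in B_\rho$, out via (\ref{condition for g-1}), then apply Jensen to the normalized kernel) supplies exactly the missing $L^1_{loc}$ base case; the monotone/dominated passage $j\to\infty$ and the finite Sobolev bootstrap from $L^1_{loc}$ up to $L^{p^*}_{loc}$ are then routine and correct. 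One cosmetic point: the constant in $g(a+b)\le 2^{\mu}\bigl(g(a)+g(b)\bigr)$ improves to $2^{\mu-1}$ by writing $a+b=2\cdot\tfrac{a+b}{2}$ and using convexity together with (\ref{condition for g-1}), though the cruder bound obviously suffices for your purposes.
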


We remark that the papers \cite{Cupini-Marcellini-Mascolo-1,Cupini-Marcellini-Mascolo-2,Cupini-Marcellini-Mascolo-3} provide fruitful ideas in dealing with regularity properties of variational integrals with $g$-growth conditions.

The following iteration lemma is useful and will be repeatedly used in the sequel, see Lemma 7.1 in \cite{G}.

\begin{lemma}\label{Stampacchia Lemma proof need}
Let $\beta, B, C, x_i$ be such that $\beta>1, C>0, B>1, x_i \ge 0$ and
\begin{equation}
x_{i+1}\le CB^{i}x_{i}^{\beta },  \ \ i=0,1,2,\cdots. \label{generalized Stampachia lemma proof need}
\end{equation}
If $x_{0}\le C^{-\frac{1}{\beta -1} } B^{-\frac{1}{\left ( \beta -1 \right )^{2}}}$, then $ x_{i}\le B^{-\frac{i}{\alpha } }x_{0}, i=0,1,2,\cdots$,
so that
$$
\lim_{i \to \infty }x_{i}=0.
$$
\end{lemma}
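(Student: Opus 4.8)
The plan is to prove, by induction on $i$, the geometric decay estimate
$$
x_i \le B^{-i/(\beta-1)}\, x_0 \qquad (i=0,1,2,\dots),
$$
after which the conclusion is immediate: since $B>1$ and $\beta>1$ we have $B^{-i/(\beta-1)}\to 0$, so $x_i\to 0$. (The exponent written as $B^{-i/\alpha}$ in the statement is to be read with $\alpha=\beta-1$.) The key point is to choose as induction hypothesis the full geometric bound rather than the weaker $x_i\le x_0$, and the threshold on $x_0$ is exactly what is needed to make the inductive step close.

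The base case $i=0$ is trivial, since it reads $x_0\le x_0$. For the inductive step, assume $x_i\le B^{-i/(\beta-1)}x_0$. Using that $t\mapsto t^\beta$ is non-decreasing on $[0,+\infty)$, the hypothesis (\ref{generalized Stampachia lemma proof need}) gives
$$
x_{i+1}\le C B^{i} x_i^{\beta} \le C B^{i}\bigl(B^{-i/(\beta-1)}x_0\bigr)^{\beta} = C\, B^{\,i\left(1-\frac{\beta}{\beta-1}\right)} x_0^{\beta} = C\, B^{-i/(\beta-1)}\, x_0^{\beta-1}\, x_0,
$$
where we used the identity $1-\frac{\beta}{\beta-1}=-\frac{1}{\beta-1}$. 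Hence it suffices to verify
$$
C\, B^{-i/(\beta-1)}\, x_0^{\beta-1} \le B^{-(i+1)/(\beta-1)},
$$
which, after cancelling $B^{-i/(\beta-1)}$, is equivalent to $C\,x_0^{\beta-1}\le B^{-1/(\beta-1)}$, i.e. to $x_0\le C^{-1/(\beta-1)}B^{-1/(\beta-1)^2}$ — precisely the hypothesis imposed on $x_0$. Therefore $x_{i+1}\le B^{-(i+1)/(\beta-1)}x_0$, which completes the induction and the proof.

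I do not expect any genuine obstacle here; the argument is routine once the right induction hypothesis is fixed, and the only care needed is the bookkeeping of exponents — arranged so that the assumed bound on $x_0$ is exactly the value for which the passage from $C\,B^{-i/(\beta-1)}x_0^{\beta-1}x_0$ to $B^{-(i+1)/(\beta-1)}x_0$ holds. (Alternatively, one could linearize by setting $z_i=\log x_i$, turning (\ref{generalized Stampachia lemma proof need}) into the affine recursion $z_{i+1}\le \log C + i\log B + \beta z_i$ and solving it explicitly, but the direct induction above is shorter.)
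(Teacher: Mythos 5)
Your proof is correct, and the induction is exactly the standard argument (the one given in Giusti's book, to which the paper simply defers without reproducing the proof). You also correctly identify that the $\alpha$ appearing in the paper's exponent $B^{-i/\alpha}$ is a misprint and should be read as $\beta-1$; the hypothesis on $x_0$ is precisely the threshold that makes the inductive step close, as you observe.
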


\section{The first generalization and applications.}
\noindent In this section we give the first generalization of Lemma \ref{Stampacchia Lemma, Gao Zhang and Ma generalized}, that is, (\ref{stampacchia_assumption-2}) is replaced by (\ref{the first condition}),
where the function $g$ is the one introduced in Section 1, and give applications to degenerate elliptic equations of divergence type with $g$-growth conditions.

We prove the following

\begin{theorem}\label{Stampacchia Lemma, Gao Zhang and Ma generalized,generalized}
Let $g:[0,+\infty)\rightarrow [0,+\infty)$ be a function satisfying the assumptions (${\cal G}_1$), (${\cal G}_2$) and (${\cal G}_3$);
let $c , \alpha, \beta, k_0$ be positive constants; let $\theta \ge 0$ satisfies
\begin{equation}\label{condition for L}
\lim _{L\rightarrow +\infty} \frac {L^\theta}{g(L)} =0;
\end{equation}
let
$\varphi: [k_0,+\infty)$ $\rightarrow [0, +\infty)$ be non-increasing
and such that
\begin{equation}\label{stampacchia_assumption generalized-generalization}
\varphi (h) \le \frac {c  h^{\theta \alpha } }{g^\alpha (h-k)} [\varphi (k)]^\beta
\end{equation}
for every $h,k$ with $h>k\ge k_0>0$. It results that:
	
{\bf (i)} if $\beta > 1$ then
\begin{equation*}\label{statement_stampacchia_beta>1,generalized}
\varphi(2L) = 0,
\end{equation*}
where $L$ be a constant satisfying
\begin{equation}\label{the value of L,generalized}
L\ge 2k_0 \quad and \quad \frac{g\left ( L \right ) }{L^{\theta } } \ge c ^{\frac{1}{\alpha } }  [\varphi (k_0)] ^{\frac {\beta -1}{\alpha}}
 2^{\frac 1 {\beta} \left(\mu \beta +\theta +\frac \mu  {\beta -1}\right)};
\end{equation}
	
{\bf(ii)} if $\beta = 1$ then for any $k\ge k_0$,
\begin{equation*} \label{generalied statement_stampacchia_beta=1,generalized}
\varphi (k)\le \varphi(k_0) e ^{1-\left( \frac {k-k_0}{\tilde \tau} \right) ^{1-\theta }},
\end{equation*}
where $\tilde \tau$ is large enough such that
\begin{equation}\label{c2.3 new}
\varphi (k_0+\tilde \tau) \le \frac {\varphi (k_0)}{e}
\end{equation}
and
\begin{equation}\label{tau,generalized}
\tilde \tau \ge \max \left \{ k_{0} ,\left ( \frac{(ce) ^{\frac 1 \alpha} 2^{\frac {(2-\theta ) \theta }{1-\theta }} (1- \theta )}{g'( 0_+)
  }  \right )^{\frac{1}{ 1-\theta } }   \right \} ;
\end{equation}
	
{\bf(iii)} if $0< \beta < 1$ then for any $k\ge k_0$,
\begin{equation} \label{statement_stampacchia_beta<1,generalized}
\varphi (k) \le 2^{\frac { \mu\alpha (1-\theta)\left ( 2-\beta  \right ) }{(1-\beta)^2 }} \left\{
\left(\frac {c  2 ^{\theta \alpha}}{g'(0_+) ^{\theta \alpha}}\right) ^\frac {1}{1-\beta} +g( k_{0})^{\frac {(1-\theta)\alpha} {1-\beta}}
 \varphi(k_0) \right\} \left(\frac{1}{g\left ( k \right ) } \right) ^{\frac{(1-\theta)\alpha} {1-\beta}}.
\end{equation}
\end{theorem}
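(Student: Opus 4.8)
The plan is to treat the three regimes $\beta>1$, $\beta=1$ and $0<\beta<1$ separately, following the scheme used for Lemma~\ref{Stampacchia Lemma, Gao Zhang and Ma generalized}; the only new feature is that the polynomial $(h-k)^{\alpha}$ in the denominator is replaced by $g^{\alpha}(h-k)$. Two elementary consequences of $(\mathcal{G}_1)$--$(\mathcal{G}_3)$ will be used repeatedly: convexity together with $g(0)=0$ gives the affine lower bound $g(t)\ge g'(0_+)\,t$ for all $t\ge0$ (the graph lies above the tangent at the origin), which converts a power of the argument into a power of $g$; and $(\mathcal{G}_2)$ gives, for every $\lambda>1$ and $t>0$, the scale comparison $g(t)\ge\lambda^{-\mu}g(\lambda t)$. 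I will also use the monotonicity of $\varphi$ to pass from estimates on a discrete grid to estimates at every level.

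\emph{The case $\beta>1$.} I would run the usual dyadic iteration. Put $s_i=2L-L\,2^{-i}$, so that $s_0=L\ge2k_0$, $s_i\uparrow 2L$ and $s_{i+1}-s_i=L\,2^{-(i+1)}$, and write $\varphi_i=\varphi(s_i)$. Applying (\ref{stampacchia_assumption generalized-generalization}) with $h=s_{i+1}$, $k=s_i$, then bounding $s_{i+1}\le2L$ and $g(L\,2^{-(i+1)})\ge2^{-(i+1)\mu}g(L)$, one gets a recursion $\varphi_{i+1}\le C\,B^{\,i}\,\varphi_i^{\,\beta}$ with $B=2^{\mu\alpha}>1$ and $C$ a constant proportional to $c\,L^{\theta\alpha}g(L)^{-\alpha}$. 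By Lemma~\ref{Stampacchia Lemma proof need} it is enough that $\varphi_0\le C^{-1/(\beta-1)}B^{-1/(\beta-1)^2}$; since $\varphi_0=\varphi(L)\le\varphi(k_0)$ and since (\ref{condition for L}) forces $g(L)/L^{\theta}\to+\infty$, this smallness holds once $L$ satisfies (\ref{the value of L,generalized}) --- indeed the second inequality there is just a rewriting of the requirement $\varphi(k_0)\le C^{-1/(\beta-1)}B^{-1/(\beta-1)^2}$, the precise constant being produced exactly as in the proof of Lemma~\ref{Stampacchia Lemma, Gao Zhang and Ma generalized}. Then $\varphi_i\to0$, whence $\varphi(2L)=0$ by monotonicity.

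\emph{The case $\beta=1$.} Here I would iterate along the \emph{non-uniform} grid $s_j=k_0+\tilde\tau\,j^{1/(1-\theta)}$, $j=0,1,2,\dots$, and prove $\varphi(s_j)\le\varphi(k_0)\,e^{-j}$ by induction on $j$. The case $j=0$ is trivial and the case $j=1$ is precisely the standing hypothesis (\ref{c2.3 new}) on $\tilde\tau$. For the step $j\to j+1$ with $j\ge1$, apply (\ref{stampacchia_assumption generalized-generalization}) with $h=s_{j+1}$, $k=s_j$; the induction closes provided $c\,s_{j+1}^{\theta\alpha}\le e^{-1}g^{\alpha}(s_{j+1}-s_j)$, i.e. $g(s_{j+1}-s_j)\ge(ce)^{1/\alpha}\,s_{j+1}^{\theta}$. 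Now $g(s_{j+1}-s_j)\ge g'(0_+)(s_{j+1}-s_j)$, while by convexity of $t\mapsto t^{1/(1-\theta)}$ one has $s_{j+1}-s_j=\tilde\tau\big((j+1)^{1/(1-\theta)}-j^{1/(1-\theta)}\big)\ge\tfrac{\tilde\tau}{1-\theta}\,j^{\theta/(1-\theta)}$, and $s_{j+1}\le2\tilde\tau(j+1)^{1/(1-\theta)}\le2\tilde\tau(2j)^{1/(1-\theta)}$ using $\tilde\tau\ge k_0$; after simplification (the powers of $j$ cancel and $\theta+\tfrac{\theta}{1-\theta}=\tfrac{(2-\theta)\theta}{1-\theta}$) the required inequality reduces to $\tilde\tau^{1-\theta}\ge g'(0_+)^{-1}(ce)^{1/\alpha}(1-\theta)\,2^{(2-\theta)\theta/(1-\theta)}$, which is (\ref{tau,generalized}). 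Finally, for arbitrary $k\ge k_0$ one picks $j$ with $s_j\le k<s_{j+1}$ and combines $\varphi(k)\le\varphi(s_j)$ with $j\ge\big((k-k_0)/\tilde\tau\big)^{1-\theta}-1$.

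\emph{The case $0<\beta<1$.} The engine is a self-improving inequality: taking $h=2k$, $k=k$ in (\ref{stampacchia_assumption generalized-generalization}), replacing $k^{\theta\alpha}$ by $g(k)^{\theta\alpha}/g'(0_+)^{\theta\alpha}$ and then $g(k)\ge2^{-\mu}g(2k)$, one gets, for every $k\ge k_0$,
\begin{equation*}
\varphi(2k)\le\frac{\tilde c\,2^{\mu(1-\theta)\alpha}}{g(2k)^{(1-\theta)\alpha}}\,\varphi(k)^{\beta},\qquad
\tilde c:=\frac{c\,2^{\theta\alpha}}{g'(0_+)^{\theta\alpha}}.
\end{equation*}
Iterating along $s_j=2^{j}k_0$ and writing $\gamma:=(1-\theta)\alpha/(1-\beta)$, a short induction gives $\varphi(s_j)\le M\,g(s_j)^{-\gamma}$ provided $M\ge g(k_0)^{\gamma}\varphi(k_0)$ and $M^{1-\beta}\ge\tilde c\,2^{\mu\gamma}$ --- the exponents fit because $\gamma-(1-\theta)\alpha=\gamma\beta$, $(1-\theta)\alpha+\gamma\beta=\gamma$ and $g(s_{j+1})/g(s_j)\le2^{\mu}$ --- so one may take $M=2^{\mu(1-\theta)\alpha/(1-\beta)^2}\big(\tilde c^{\,1/(1-\beta)}+g(k_0)^{\gamma}\varphi(k_0)\big)$. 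Passing from the grid to an arbitrary $k$ (monotonicity of $\varphi$ together with $g(k)\le2^{\mu}g(s_j)$ when $s_j\le k<s_{j+1}$) costs a further factor $2^{\mu\gamma}$, and since $\mu\gamma+\mu(1-\theta)\alpha/(1-\beta)^2=\mu(1-\theta)\alpha(2-\beta)/(1-\beta)^2$ this yields exactly (\ref{statement_stampacchia_beta<1,generalized}). The point I expect to be delicate throughout is the tension between the growth factor $h^{\theta\alpha}$ in the numerator and the denominator $g^{\alpha}(h-k)$, whose size is controlled only from above by $(\mathcal{G}_2)$ and from below by the affine bound: this is what forces the exponent $1-\theta$ and the non-uniform grid when $\beta=1$, and, in the regime $\beta>1$ where $\theta$ may be $\ge1$, it is what makes hypothesis (\ref{condition for L}) and the doubling estimate $(\mathcal{G}_2)$ indispensable --- the naive scaling $L^{1-\theta}$ of \cite{GZM} is no longer available. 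Getting the constants into the exact closed forms of (\ref{the value of L,generalized}), (\ref{tau,generalized}) and (\ref{statement_stampacchia_beta<1,generalized}) is then bookkeeping.
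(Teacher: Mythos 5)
Your overall strategy in all three regimes is the same as the paper's: dyadic iteration with Lemma~\ref{Stampacchia Lemma proof need} for $\beta>1$, the non-uniform grid $k_0+\tilde\tau\,s^{1/(1-\theta)}$ together with the affine lower bound $g(t)\ge g'(0_+)t$ for $\beta=1$, and the self-improving inequality on the grid $2^s k_0$ for $0<\beta<1$. Parts (ii) and (iii) reproduce the paper's argument and constants correctly (your direct induction $\varphi(s_j)\le M\,g(s_j)^{-\gamma}$ is an equivalent repackaging of the paper's substitution $\varphi=\psi\cdot c_3^{1/(1-\beta)}g^{-\tilde\alpha/(1-\beta)}$).

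There is, however, a concrete slip in part (i). To verify the smallness hypothesis of Lemma~\ref{Stampacchia Lemma proof need} one needs
\begin{equation*}
\varphi(L)\le C^{-\frac{1}{\beta-1}}B^{-\frac{1}{(\beta-1)^2}},\qquad
C=\frac{c\,2^{(\theta+\mu)\alpha}L^{\theta\alpha}}{g^{\alpha}(L)},\quad B=2^{\mu\alpha}.
\end{equation*}
You bound $\varphi(L)\le\varphi(k_0)$ by monotonicity and then assert that (\ref{the value of L,generalized}) is ``just a rewriting'' of the resulting requirement $\varphi(k_0)\le C^{-1/(\beta-1)}B^{-1/(\beta-1)^2}$. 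That is not so: rewriting that requirement in terms of $g(L)/L^{\theta}$ produces the exponent $\theta+\mu+\frac{\mu}{\beta-1}$ on the power of $2$, whereas (\ref{the value of L,generalized}) has the strictly smaller exponent $\frac{1}{\beta}\left(\mu\beta+\theta+\frac{\mu}{\beta-1}\right)$; the difference is $\frac{\beta-1}{\beta}\left(\theta+\frac{\mu}{\beta-1}\right)>0$. To obtain the sharper condition actually stated in the theorem one must apply the standing hypothesis (\ref{stampacchia_assumption generalized-generalization}) \emph{once more}, with $k=k_0$ and $h=L\ge 2k_0$, giving
\begin{equation*}
\varphi(L)\le \frac{c\,L^{\theta\alpha}}{g^{\alpha}(L-k_0)}[\varphi(k_0)]^{\beta}\le \frac{c\,L^{\theta\alpha}2^{\mu\alpha}}{g^{\alpha}(L)}[\varphi(k_0)]^{\beta},
\end{equation*}
and only then compare with the threshold; the extra factor $\frac{c\,L^{\theta\alpha}2^{\mu\alpha}}{g^{\alpha}(L)}$ and the power $\beta$ on $\varphi(k_0)$ are exactly what turn your exponent into $\frac{1}{\beta}(\mu\beta+\theta+\frac{\mu}{\beta-1})$. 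As written, your argument proves $\varphi(2L)=0$ only under a strictly stronger requirement on $L$ than (\ref{the value of L,generalized}), so it does not establish the theorem with the constant claimed.
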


\begin{remark}
We remark that $\theta =1$ satisfies (\ref{condition for L}) for the case $g(t) =t\ln (e+t)$.
\end{remark}

\begin{remark}
The second inequality of (\ref{the value of L,generalized}) holds true for $L $ sufficiently large because of (\ref{condition for L}).
\end{remark}

\begin{remark}
(\ref{c2.3 new}) holds true for $\tilde \tau$ sufficiently large. In fact, we take $k=k_0$ in (\ref{stampacchia_assumption generalized-generalization}) and we have, for all $h>k_0>0$,
\begin{equation}\label{22.1}
\varphi (h) \le \frac {c  h^{\theta \alpha } }{g^\alpha (h-k_0)} [\varphi (k_0)]^\beta.
\end{equation}
Since by (\ref{condition for L})
$$
\lim_{h\to +\infty} \frac {h^\theta}{g(h-k_0 )} \le 2 ^{\max \{\theta, 1\} -1} \lim _{h \to +\infty} \frac {(h-k_0) ^\theta +k_0^\theta}{g(h-k_0)} =0,
$$
then from (\ref{22.1}) one gets
$$
\lim _{h\to +\infty} \varphi (h) =0.
$$
\end{remark}

\noindent{\it Proof of Theorem \ref{Stampacchia Lemma, Gao Zhang and Ma generalized,generalized}.} (i) For $\beta>1$, we fix $L>k_0$ and choose levels
$$
t_i =2L\left ( 1-2^{-i-1}  \right ) , \ \ i=0,1,2,\cdots,
$$
It is obvious that $k_{0}< L\le t_{i}< 2L $ and $\left \{ t_{i}  \right \} $ be an increasing sequence. We choose in
 (\ref{stampacchia_assumption generalized-generalization})
$$
k=t_{i} , \ h=t_{i+1},\ x_{i}=\varphi \left ( t_{i}  \right ),\  x_{i+1}=\varphi \left ( t_{i+1}  \right ),
$$
and noticing that $h-k=t_{i+1}-t_{i}=L2^{-i-1}$, we have
\begin{equation}\label{c2.5}
x_{i+1}\le \frac{c \left ( 2L\left ( 1-2^{-i-2} \right )  \right ) ^{\theta \alpha } }{g^{\alpha }\left ( L2^{-i-1} \right )  }
 x_{i}^{\beta }\le \frac{c  2^{\theta \alpha}L ^{\theta \alpha}}{g^{\alpha }\left ( L2^{-i-1} \right )  } x_{i}^{\beta }  , \ \ i=0,1,2,\cdots.
\end{equation}
By (\ref{condition for g-1}),
\begin{equation}\label{c2.6}
g\left ( L \right )=g\left ( 2^{i+1} L2^{-i-1}  \right )\le 2^{\left ( i+1 \right )\mu  } g(L2^{-i-1}).
\end{equation}
(\ref{c2.5}) and (\ref{c2.6}) imply
$$
x_{i+1}\le \frac{c  2^{\left ( \theta +  \mu  \right )\alpha  }L^{\theta \alpha } 2^{i\mu \alpha }  }{g^{\alpha }\left ( L \right )  }x_{i}^{\beta }, \ \ i=0,1,2, \cdots.
$$
Thus (\ref{generalized Stampachia lemma proof need}) holds true with (we keep in mind that $\beta>1$)
$$
C=\frac{c  2^{\left ( \theta +\mu  \right )\alpha  }L^{\theta \alpha }  }{g^{\alpha }\left ( L \right )  }   \quad \mbox { and  } \quad B=2^{\mu\alpha}.$$
Note that the constant $C$ above is finite since  (\ref{condition for L}). We get from Lemma \ref{Stampacchia Lemma proof need} that
\begin{equation}\label{Stampacchia Lemma proof need result}
\lim_{i \to \infty} x_{i}=0
\end{equation}
provided that
\begin{equation}\label{value L}
\begin{array}{llll}
\displaystyle x_{0}=\varphi \left ( t_{0}  \right )= \varphi (L) & \le &\displaystyle \left ( \frac{c  2^{\left ( \theta +\mu  \right )\alpha}
 L^{\theta \alpha  } }{g^{\alpha }\left ( L \right )  }  \right )^{-\frac{1}{\beta -1} } \left ( 2^{\mu \alpha }  \right )^{-\frac{1}{\left ( \beta -1 \right )^{2}  } } \\
&=&\displaystyle \frac{c ^{-\frac{1}{\beta -1} } 2^{-\frac{\alpha }{\beta -1} \left ( \theta +\mu +\frac{\mu }{\beta -1}  \right ) }
L^{-\frac{\theta \alpha }{\beta -1} } }{g^{-\frac{\alpha }{\beta -1} } \left ( L \right ) }.
\end{array}
\end{equation}
Note that (\ref{Stampacchia Lemma proof need result}) implies
$$
\varphi \left ( 2L \right )=0.
$$
Let us check (\ref{value L}) and determine the value of $L$. 
In (\ref{stampacchia_assumption generalized-generalization}), we take $k=k_0$ and $h=L\ge 2k_0$ (which is equivalent to $L-k_0 \ge \frac{L}{2}$) and we have
$$
\varphi \left ( L \right )\le \frac{c L^{\theta \alpha } }{g^{\alpha } \left ( L-k_0 \right ) } \left [ \varphi \left ( k_0 \right )
 \right ] ^{\beta }\le \frac{c L^{\theta \alpha }2^{\mu \alpha }  }{g^{\alpha }\left ( L \right )  }\left [ \varphi \left ( k_0 \right )  \right ]^{\beta }   ,
$$
where we have used  the fact
$$
g^{\alpha }\left ( L-k_0 \right )\ge g^{\alpha }\left ( \frac{L}{2} \right )\ge 2^{-\mu \alpha }g^{\alpha }\left ( L \right ).
$$
(\ref{value L}) would be satisfied if $L \ge 2k_0$ and
$$
\frac{c L^{\theta \alpha }2^{\mu \alpha }  }{g^{\alpha }\left ( L \right )  }\left [ \varphi \left ( k_0 \right )  \right ]^{\beta }\le
\frac{c ^{-\frac{1}{\beta -1} } 2^{-\frac{\alpha }{\beta -1} \left ( \theta +\mu +\frac{\mu }{\beta -1}  \right ) } L^{-\frac{\theta \alpha }
{\beta -1} } }{g^{-\frac{\alpha }{\beta -1} } (L)}.
$$
The above inequality is equivalent to the second inequality in (\ref{the value of L,generalized}).

\vspace{3mm}

(ii) Let $\beta=1$ and assume that $\tilde \tau$ satisfies (\ref{c2.3 new}) and (\ref{tau,generalized}). Let
$$
k_{s}=k_{0}+\tilde \tau s^{\frac{1}{1-\theta } }, \ \ s=0,1,2,\cdots,
$$
then $\left \{ k_{s}  \right \}$ is an increasing sequence and
$$
k_{s+1}-k_{s}=\tilde \tau \left ( \left ( s+1 \right )^{\frac{1}{1-\theta } } -s^{\frac{1}{1-\theta } } \right ) .
$$
We use Taylor's formula to get
$$
k_{s+1}-k_{s}=\tilde  \tau \left( \frac{1}{1-\theta}s^{\frac{\theta}{1-\theta} }+\frac{\theta }{2!\left ( 1-\theta  \right )^{2}  }\xi
^{\frac{2\theta -1}{1-\theta} } \right)\ge \frac{\tilde \tau }{1-\theta}s^{\frac{\theta}{1-\theta} } ,
$$
where $\xi$ lies in the open interval $\left (s,s+1  \right ) $. In (\ref{stampacchia_assumption generalized-generalization}) let us take
$\beta=1, k=k_s$ and $h=k_{s+1}$, we use the above inequality and obtain, for $s\ge 1$,
\begin{equation}\label{Recursive condition}
\varphi \left ( k_{s+1}  \right ) \le \frac{c \left( k_{0}+\tilde \tau\left ( s+1 \right )^{\frac{1}{1-\theta} }   \right )^{\theta\alpha}
 }{g^{\alpha } \left ( \frac{\tilde \tau}{1-\theta} s^{\frac{\theta}{1-\theta } }  \right ) }\varphi \left ( k_s \right )  \le \frac{c
 \left ( k_{0}+\tilde \tau\left ( 2s \right )^{\frac{1}{1-\theta} }   \right ) ^{\theta\alpha}  }{g^{\alpha } \left ( \frac{\tilde \tau}{1-\theta}
  s^{\frac{\theta}{1-\theta } }  \right ) }\varphi \left ( k_s \right ) .
\end{equation}
We use the fact $g(0)=0$ and Lagrange mean value theorem to obtain
\begin{equation}\label{Lagrange inequality}
g\left ( \frac{\tilde \tau}{1-\theta} s^{\frac{\theta}{1-\theta} }  \right )=g\left ( \frac{\tilde \tau}{1-\theta} s^{\frac{\theta}{1-\theta} }
 \right ) -g\left ( 0 \right ) =g'\left ( \zeta  \right )\frac{\tilde \tau}{1-\theta} s^{\frac{\theta}{1-\theta} }\ge g'\left (0_+ \right )
 \frac{\tilde \tau}{1-\theta} s^{\frac{\theta}{1-\theta} } ,
\end{equation}
where $ \zeta$ lies in the open interval $\left ( 0,\frac{\tilde \tau}{1-\theta} s^{\frac{\theta}{1-\theta} }  \right )  $. Substituting
(\ref{Lagrange inequality}) into (\ref{Recursive condition}) and noticing, for $s\ge 1$,
$$
k_{0}\le \tilde \tau<\tilde \tau\left ( 2s \right )^{\frac{1}{1-\theta} }
$$
then for $s\ge 1$,
\begin{equation}\label{c2.8}
\begin{array}{llll}
\varphi (k_{s+1}) &\le &\displaystyle \frac {c  \left(k_0+ \tilde \tau  (2s) ^{\frac 1 {1-\theta}}\right)^{\theta \alpha} }{g'(0_+)^\alpha
  \left(\frac {\tilde \tau }{1-\theta }\right) ^\alpha  s ^{\frac {\theta \alpha}{1-\theta}}} \varphi (k_s)\le \frac {c 2^{\theta \alpha}
  \left(\tilde \tau  (2s) ^{\frac 1 {1-\theta}}\right)^{\theta \alpha} }{g'(0_+)^\alpha  \left(\frac {\tilde \tau }{1-\theta }\right) ^\alpha
   s ^{\frac {\theta \alpha}{1-\theta}}} \varphi(k_s)\\
&=&\displaystyle \frac {c 2^{\theta \alpha} \left(\tilde \tau  2^{\frac 1 {1-\theta}}\right)^{\theta \alpha} }{g'(0_+)^\alpha
 \left(\frac {\tilde \tau }{1-\theta }\right) ^\alpha  } \varphi(k_s) =\frac{c \left ( 2^{\frac{2-\theta}{1-\theta} }\tilde \tau  \right )^{\theta\alpha}
  }{g'\left ( 0_+\right )^{\alpha } \left ( \frac{\tilde \tau}{1-\theta}  \right )^{\alpha}  }\varphi \left ( k_{s}  \right ) \\
&\le&\displaystyle   \frac 1 e \varphi(k_s),
\end{array}
\end{equation}
where in the last inequality we have used (\ref{tau,generalized}), which ensures
$$
\frac{c \left ( 2^{\frac{2-\theta}{1-\theta} }\tilde \tau  \right )^{\theta\alpha}
  }{g'\left ( 0_+\right )^{\alpha } \left ( \frac{\tilde \tau}{1-\theta}  \right )^{\alpha}  }\le \frac{1}{e} .
$$
(\ref{c2.3 new}) ensures (\ref{c2.8}) also holds true for $s=0$ as well.
For any $k \ge k_0$, there exists $s\in \mathbb {N}^{+}$ such that
$$
k_{0}+\tilde \tau\left ( s-1 \right )^{\frac{1}{1-\theta} }  \le k<k_{0}+\tilde \tau s^{\frac{1}{1-\theta} } .
$$
Thus, considering $	\varphi \left ( k \right )$ is non-increasing, one has
$$
\varphi \left ( k \right ) \le \varphi \left ( 	k_{0}+\tilde \tau\left ( s-1 \right )^{\frac{1}{1-\theta} }  \right ) =\varphi \left ( k_{s-1}
\right )\le e^{1-s}\varphi \left ( k_{0}  \right )\le \varphi \left ( k_{0}  \right )e^{1-\left ( \frac{k-k_0}{\tilde \tau}  \right )^{1-\theta}}.
$$

\vspace{3mm}

(iii) For $0<\beta <1$ we use Lagrange mean value theorem again and we have, for $k\ge k_0>0$,
$$
g\left ( k \right )=g\left ( k \right )-g\left ( 0 \right ) =g'\left ( \varsigma  \right ) k\ge g'\left ( 0_+\right ) k ,
$$
where $\varsigma $ lies in the open interval $\left (0,k  \right ) $. We take $h=2k$ in (\ref{stampacchia_assumption generalized-generalization})
and we get, for every $k \ge k_0>0 $,
\begin{equation}\label{22.2}
\varphi \left ( 2k \right ) \le \frac{c \left ( 2k\right ) ^{\theta \alpha } }{g^{\alpha }\left ( k \right )   }\left [ \varphi
\left ( k \right )  \right ]^{\beta }\le \frac{c 2^{\theta \alpha } }{ g'\left ( 0_+ \right )^{\theta \alpha }g\left ( k \right )^
{\left ( 1-\theta \right )\alpha }} \left [ \varphi \left ( k \right )  \right ]^{\beta }=\frac {c_3}{g^{\tilde \alpha}(k)} [\varphi (k)]^\beta,
\end{equation}
where
\begin{equation}\label{22.3}
c_3= \frac {c 2^{\theta \alpha}}{g'(0_+) ^{\theta \alpha}}, \ \ \tilde \alpha =(1-\theta) \alpha.
\end{equation}
If we introduce a new function $\psi (k): [k_0,+\infty) \rightarrow [0,+\infty)$ such that
$$
\varphi (k) =\psi (k) \frac {c_3^{\frac 1 {1-\beta}}}{g^{\frac {\tilde \alpha}{1-\beta}}(k)},
$$
then we use (\ref{22.2}) and (\ref{condition for g-1})  and we get
\begin{equation}\label{2.8}
\psi (2k)\le \frac {g^{\frac {\tilde \alpha} {1-\beta}}(2k)}{g^{\frac {\tilde \alpha} {1-\beta}}(k)} \psi ^\beta (k) \le 2 ^{\frac
{\mu {\tilde \alpha}}{1-\beta}} \psi ^\beta (k), \ \ k\ge k_0.
\end{equation}
For any $k\ge k_0$, one can find a natural number $s$ such that
$$
2^{s-1} k_0 \le k < 2 ^s k_0,
$$
for such a $k$ one has
\begin{equation}\label{2.9}
\varphi (k)\le \varphi (2^{s-1}k_0)  =\psi (2^{s-1}k_0) \frac {c_3 ^{\frac 1 {1-\beta}}} {g^{\frac {\tilde \alpha} {1-\beta}} (2^{s-1}k_0)} .
\end{equation}
(\ref{2.8}) implies, for $s>1$,
\begin{equation}\label{2.10}
\begin{array}{llll}
 \displaystyle \psi (2^{s-1}k_0) &\le & \displaystyle  2 ^{\frac {\mu {\tilde \alpha}}{1-\beta } \sum \limits_{i=0}^{s-2}\beta ^i}  \psi ^{\beta ^{s-1}} (k_0)
\le 2 ^{\frac {\mu {\tilde \alpha}}{(1-\beta )^2}}(1+\psi (k_0)) \\
 & \le & \displaystyle  2 ^{\frac {\mu {\tilde \alpha}}{(1-\beta )^2}} \left(1+\varphi (k_0) g^{\frac {\tilde \alpha} {1-\beta}} (k_0) c_3^{\frac 1 {\beta -1}}\right).
\end{array}
\end{equation}
The above inequality holds true for $s=1$ as well. Since
$$
g (2 ^s k_0) =g \left(2 (2^{s-1}k_0) \right) \le 2 ^\mu g(2^{s-1}k_0),
$$
then
\begin{equation}\label{2.11}
g(2^{s-1}k _0) \ge 2^{-\mu } g(2^s k_0) \ge 2^{-\mu} g(k).
\end{equation}
Substituting (\ref{2.10}) and (\ref{2.11}) into (\ref{2.9}) we arrive at
\begin{equation*}\label{2.12}
\varphi (k) \le 2 ^{\frac {\mu {\tilde \alpha}(2-\beta)}{(1-\beta )^2}} \left( c_3 ^{\frac 1 {1-\beta }}+\varphi (k_0) g^{\frac
{\tilde \alpha} {1-\beta}} (k_0)\right) \left(\frac 1 {g(k)}\right) ^{\frac {\tilde \alpha} {1-\beta}}.
\end{equation*}
This inequality is equivalent to (\ref{statement_stampacchia_beta<1,generalized}) due to the definition of $c_3$ and $\tilde \alpha$ in (\ref{22.3}), completing the proof of Theorem \ref{Stampacchia Lemma, Gao Zhang and Ma generalized,generalized}.
\qed

\vspace{3mm}

If one takes $\theta =0$ in (\ref{stampacchia_assumption generalized-generalization}), then  the following corollary of Theorem \ref{Stampacchia Lemma, Gao Zhang and Ma generalized,generalized} is obvious.

\begin{corollary}\label{Stampacchia Lemma, Gao Zhang and Ma generalized,generalized-new}
Let $g:[0,+\infty)
 \rightarrow [0,+\infty)$ be a function satisfying the assumptions (${\cal G}_1$), (${\cal G}_2$) and (${\cal G}_3$).
 Let $c , \alpha, \beta, k_0$ be positive constants; let
$\varphi: [k_0,+\infty)$ $\rightarrow [0, +\infty)$ be non-increasing
and such that
\begin{equation}\label{stampacchia_assumption generalized-generalization-new}
\varphi (h) \le \frac {c  }{g^\alpha (h-k)} [\varphi (k)]^\beta
\end{equation}
for every $h,k$ with $h>k\ge k_0>0$. It results that:
	
{\bf (i)} if $\beta > 1$ then
\begin{equation}\label{statement_stampacchia_beta>1,generalized-new}
\varphi(2L) = 0,
\end{equation}
where $L$ be a constant satisfying
\begin{equation*}\label{the value of L,generalized-new}
L\ge 2k_0 \quad and \quad g\left ( L \right ) \ge c ^{\frac{1}{\alpha } }  [\varphi (k_0)] ^{\frac {\beta -1}{\alpha}}
 2^{\left(\mu +\frac \mu  {\beta(\beta -1) }\right)};
\end{equation*}
	
{\bf(ii)} if $\beta = 1$ then for any $k\ge k_0$,
\begin{equation} \label{generalied statement_stampacchia_beta=1,generalized-new}
\varphi (k)\le \varphi(k_0) e ^{1-\frac {k-k_0}{\tilde \tau}},
\end{equation}
where $\tilde \tau$ is large enough such that
\begin{equation}\label{c2.3 new new}
\varphi (k_0+\tilde \tau) \le \frac {\varphi (k_0)}{e}
\end{equation}
and
\begin{equation}\label{tau,generalized-new new}
\tilde \tau \ge \max \left \{ k_{0} ,\frac{(c e)^{\frac 1 \alpha} }{g'( 0_+)
}  \right \} ;
\end{equation}
	
{\bf(iii)} if $0< \beta < 1$ then for any $k\ge k_0$,
\begin{equation*} \label{statement_stampacchia_beta<1,generalized-new}
\varphi (k) \le 2^{\frac { \mu\alpha \left ( 2-\beta  \right ) }{(1-\beta)^2 }} \left\{
c ^\frac {1}{1-\beta} +g( k_{0})^{\frac {\alpha} {1-\beta}}
 \varphi(k_0) \right\} \left(\frac{1}{g\left ( k \right ) } \right) ^{\frac{\alpha} {1-\beta}}.
\end{equation*}
\end{corollary}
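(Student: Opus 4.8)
The plan is to obtain Corollary \ref{Stampacchia Lemma, Gao Zhang and Ma generalized,generalized-new} as the special case $\theta=0$ of Theorem \ref{Stampacchia Lemma, Gao Zhang and Ma generalized,generalized}. With $\theta=0$ the hypothesis (\ref{stampacchia_assumption generalized-generalization}) is literally (\ref{stampacchia_assumption generalized-generalization-new}), so nothing needs to be rewritten in the assumption; the only point that genuinely requires a word of justification is that the standing side condition (\ref{condition for L}) of the theorem is automatically fulfilled when $\theta=0$, and after that each of the three conclusions is read off from the corresponding conclusion of the theorem by putting $\theta=0$ and simplifying the constants.

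First I would check (\ref{condition for L}) in the limiting case $\theta=0$, that is, $\lim_{L\to+\infty}1/g(L)=0$, equivalently $g(L)\to+\infty$. This is precisely where assumption (${\cal G}_3$) is used: since $g$ is of class $C^1$, convex and non-decreasing with $g(0)=0$, its derivative $g'$ is non-negative and non-decreasing, so by the Lagrange mean value theorem $g(L)=g(L)-g(0)=g'(\varsigma)L\ge g'(0_+)L$ for some $\varsigma\in(0,L)$; since $g'(0_+)>0$ this gives $g(L)\ge g'(0_+)L\to+\infty$. Hence Theorem \ref{Stampacchia Lemma, Gao Zhang and Ma generalized,generalized} is applicable with $\theta=0$, and its conclusions hold with this choice.

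It then remains only to perform the substitution $\theta=0$ in cases (i), (ii), (iii). In case (i), the exponent $\tfrac{1}{\beta}\bigl(\mu\beta+\theta+\tfrac{\mu}{\beta-1}\bigr)$ occurring in (\ref{the value of L,generalized}) collapses to $\mu+\tfrac{\mu}{\beta(\beta-1)}$, which is exactly the exponent displayed in the corollary, and the conclusion $\varphi(2L)=0$ is unchanged. In case (ii), the decay exponent $1-\theta$ becomes $1$, so (\ref{c2.3 new}) becomes (\ref{c2.3 new new}), while in (\ref{tau,generalized}) one has $2^{\frac{(2-\theta)\theta}{1-\theta}}=2^{0}=1$ and the factors $(1-\theta)$ and $\tfrac{1}{1-\theta}$ are both equal to $1$, so (\ref{tau,generalized}) reduces to (\ref{tau,generalized-new new}) and the bound simplifies to $\varphi(k)\le\varphi(k_0)e^{1-(k-k_0)/\tilde\tau}$. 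In case (iii), $\tilde\alpha=(1-\theta)\alpha=\alpha$ and, by the definition (\ref{22.3}) of $c_3$, one gets $c_3=c\,2^{\theta\alpha}/g'(0_+)^{\theta\alpha}=c$ when $\theta=0$, so (\ref{statement_stampacchia_beta<1,generalized}) becomes the inequality asserted in case (iii) of the corollary. This completes the proof.

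I do not expect any real obstacle here: the corollary is entirely subsumed in Theorem \ref{Stampacchia Lemma, Gao Zhang and Ma generalized,generalized}, and the one substantive remark is that (${\cal G}_3$) is what guarantees $g(L)\to+\infty$, so that the hypothesis (\ref{condition for L}) is not vacuous for $\theta=0$. If one preferred a self-contained argument, one could simply rerun the proof of the theorem verbatim with $\theta=0$: every factor $h^{\theta\alpha}$, $L^{\theta\alpha}$ and $s^{\theta/(1-\theta)}$ disappears, and Lemma \ref{Stampacchia Lemma proof need}, the convexity estimate (\ref{condition for g-1}), and the Lagrange mean value inequalities are invoked exactly as before, with no new ingredient needed.
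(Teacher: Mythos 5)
Your proof is correct and matches the paper's intent exactly: the paper derives the corollary simply by setting $\theta=0$ in Theorem \ref{Stampacchia Lemma, Gao Zhang and Ma generalized,generalized}, and your verification that condition (\ref{condition for L}) reduces to $g(L)\to+\infty$ (guaranteed by (${\cal G}_3$) via the mean value theorem) plus the arithmetic simplification of the three bounds is precisely what makes that substitution legitimate.
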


\vspace{3mm}

In the remaining part of this section we shall restrict ourselves to the case $\theta =0$ in (\ref{stampacchia_assumption generalized-generalization}).
We now turn our attention to (\ref{stampacchia_assumption generalized-generalization-new}). Let us consider the inequality
\begin{equation}\label{new-new}
\varphi  ( 2k  )\le \frac{\tilde c}{g^{\alpha } ( k  )  } [ \varphi  ( k  )   ]^{\beta }, \ \ \   \forall k\ge k_0 >0.
\end{equation}
Let us compare (\ref{stampacchia_assumption generalized-generalization-new}) with (\ref{new-new}): it is obvious that if one takes $h=2k$ in
(\ref{stampacchia_assumption generalized-generalization-new}), then one gets (\ref{new-new}) with $\tilde c=c $. We ask the following question: is
 (\ref{new-new}) weaker than (\ref{stampacchia_assumption generalized-generalization-new}) for all cases of $\beta $? The answer is: it depends.
 For different cases of the value of $\beta$, we have different answers. 

The following theorem says that, in case of $0<\beta <1$, the two assumptions (\ref{stampacchia_assumption generalized-generalization-new}) and (\ref{new-new}) are equivalent.

\begin{theorem}\label{remark 1}
Let $g:[0,+\infty)  \rightarrow [0,+\infty)$ be a function satisfying the assumptions (${\cal G}_1$), (${\cal G}_2$) and (${\cal G}_3$).
Let $\varphi : [k_{0},+  \infty )\to [0,+  \infty )$ be non-increasing and $\alpha \in \left ( 0,+  \infty  \right ) $, $\beta  \in \left ( 0,1 \right ) $ be constants. Then
\begin{equation*}
	(\ref{stampacchia_assumption generalized-generalization-new})\Leftrightarrow (\ref{new-new}).
\end{equation*}
\end{theorem}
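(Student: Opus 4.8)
The easy half of the equivalence is immediate: taking $h=2k$ in $(\ref{stampacchia_assumption generalized-generalization-new})$, which is legitimate since $2k>k\ge k_0>0$, produces exactly $(\ref{new-new})$ with $\tilde c=c$. So the plan is to concentrate on the converse: assuming $(\ref{new-new})$ with some constant $\tilde c$, I want $(\ref{stampacchia_assumption generalized-generalization-new})$ to hold with a larger constant $c$ depending only on $\tilde c,\mu,\alpha,\beta,k_0$ and $\varphi(k_0)$. The only ingredients will be the monotonicity of $\varphi$ and of $g$, the strict inequality $0<\beta<1$, and the growth bound (${\cal G}_2$); note that $g(k)>0$ whenever $k\ge k_0>0$, so no division by zero occurs.

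The first step is to extract from $(\ref{new-new})$ the a priori decay
\[
\varphi(k)\le \frac{M}{g(k)^{\alpha/(1-\beta)}},\qquad k\ge k_0,
\]
for a suitable constant $M=M(\tilde c,\mu,\alpha,\beta,k_0,\varphi(k_0))$. This is nothing but the computation carried out in part (iii) of the proof of Theorem \ref{Stampacchia Lemma, Gao Zhang and Ma generalized,generalized}, which uses only the doubling inequality $(\ref{22.2})$ appearing there, and for $\theta=0$ that inequality coincides with $(\ref{new-new})$. Concretely, the substitution $\psi(k):=\varphi(k)\,g(k)^{\alpha/(1-\beta)}$ turns $(\ref{new-new})$, via (${\cal G}_2$), into $\psi(2k)\le \tilde c\,2^{\mu\alpha/(1-\beta)}\psi(k)^{\beta}$; since $\beta<1$, iterating this relation along the dyadic points $2^{j}k_0$ keeps $\psi$ bounded by a constant, and a final use of (${\cal G}_2$) transfers the bound from those points to an arbitrary $k\ge k_0$.

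Now fix $h>k\ge k_0$. If $\varphi(k)=0$ there is nothing to prove, since $\varphi(h)\le\varphi(k)=0$ while the right-hand side of $(\ref{stampacchia_assumption generalized-generalization-new})$ also vanishes; so assume $\varphi(k)>0$ and let $m\ge0$ be the integer with $2^{m}k\le h<2^{m+1}k$. If $m=0$, then $h-k<k$, so $g(h-k)\le g(k)$ by monotonicity of $g$; writing $\varphi(k)=\varphi(k)^{1-\beta}\varphi(k)^{\beta}$ and applying the decay estimate to the first factor, $\varphi(h)\le\varphi(k)\le M^{1-\beta}g(k)^{-\alpha}\varphi(k)^{\beta}\le M^{1-\beta}g(h-k)^{-\alpha}\varphi(k)^{\beta}$. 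If $m\ge1$, then $2^{m-1}k\ge k\ge k_0>0$, so $(\ref{new-new})$ may be applied at the level $2^{m-1}k$; combining $\varphi(h)\le\varphi(2^{m}k)$ with $\varphi(2^{m-1}k)\le\varphi(k)$ (the latter using $\beta>0$) gives $\varphi(h)\le\tilde c\,g(2^{m-1}k)^{-\alpha}\varphi(k)^{\beta}$. Finally $h-k<h<2^{m+1}k=4\cdot 2^{m-1}k$, so monotonicity of $g$ and (${\cal G}_2$) with $\lambda=4$ give $g(h-k)\le g(4\cdot 2^{m-1}k)\le 4^{\mu}g(2^{m-1}k)$, i.e. $g(2^{m-1}k)^{-\alpha}\le 4^{\mu\alpha}g(h-k)^{-\alpha}$, hence $\varphi(h)\le 4^{\mu\alpha}\tilde c\,g(h-k)^{-\alpha}\varphi(k)^{\beta}$. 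Choosing $c:=\max\{M^{1-\beta},\,4^{\mu\alpha}\tilde c\}$ yields $(\ref{stampacchia_assumption generalized-generalization-new})$.

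The delicate point, I expect, is the range $k\le h<2k$ (the case $m=0$): there the gap $h-k$ is too small for any iteration of the doubling inequality $(\ref{new-new})$ to carry information from $\varphi(k)$ down to $\varphi(h)$, so one cannot avoid invoking the a priori decay — which, however, is itself a free consequence of $(\ref{new-new})$ by the first step. For $h\ge 2k$ the argument is robust: a single application of $(\ref{new-new})$ at level $2^{m-1}k$, together with the comparison $g(h-k)\le 4^{\mu}g(2^{m-1}k)$, does the job. Finally, I note that the convexity and $C^{1}$ regularity of $g$ contained in (${\cal G}_1$), and the assumption (${\cal G}_3$), are not needed for this theorem.
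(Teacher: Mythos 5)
Your proof is correct and follows essentially the same route as the paper's: you split on the ratio $h/k$, handling $h\ge 2k$ by one application of (\ref{new-new}) at level $2^{m-1}k$ together with the doubling estimate $g(h-k)\le 4^{\mu}g(2^{m-1}k)$, and handling $k<h<2k$ via the a priori decay $\varphi(k)\lesssim g(k)^{-\alpha/(1-\beta)}$, exactly as in the paper's Case 1 and Case 2. The only (minor, and welcome) difference is that you state explicitly that the decay estimate is already a consequence of (\ref{new-new}) alone, because the proof of Corollary~\ref{Stampacchia Lemma, Gao Zhang and Ma generalized,generalized-new}-(iii) uses only $h=2k$; the paper's phrasing ``(\ref{stampacchia_assumption generalized-generalization-new}) implies'' could otherwise look circular at this point.
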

\begin{proof} ``$\Rightarrow$".  We take $h=2k$ in (\ref{stampacchia_assumption generalized-generalization-new}) and we get (\ref{new-new}) with $\tilde c=c$.

``$\Leftarrow$". Assume (\ref{new-new}). Let us consider $h> k\ge k_{0}$. We divide the proof into two cases: $2^{n+1}k\ge h>2^{n}k$ for some integer $n\ge 1$ and $2k\ge h>k$.

{\bf Case 1}: $2^{n+1}k\ge h>2^{n}k$ for some integer $n\ge 1$. Since $\varphi$ decreases, we have
\begin{equation}\label{2.14}
\varphi  ( h  )\le \varphi  ( 2^{n}k   )=\varphi  ( 2 ( 2^{n-1}k  )  ),
\end{equation}
here we keep in mind that $n\ge 1$, so $2^{n-1}k\ge k\ge k_{0}$ and we use (\ref{new-new}) with $2^{n-1}k$ in place of $k$:
\begin{equation}\label{2.15}
\varphi  ( 2 ( 2^{n-1}k ))\le \frac{\tilde c}{g^{\alpha}(2^{n-1}k)}[\varphi(2^{n-1}k)]^{\beta }.
\end{equation}
Since $2^{n-1}k\ge k$, we use the monotonicity of $\varphi$ to obtain $\varphi  ( 2^{n-1}k  ) \le \varphi  ( k  )$, namely
\begin{equation}\label{New-new 1}
 [ \varphi  ( 2^{n-1}k  )  ]^{\beta }   \le  [ \varphi  ( k  )   ]^{\beta }.
\end{equation}
Since  $2^{n+1}k\ge h$, we have $ ( 2^{n+1} -1  )k\ge h-k$, then
\begin{equation*}
2^{n-1}k=\frac{2^{n+1}k}{4}\ge \frac{ ( 2^{n+1}-1   ) k}{4}\ge \frac{h-k}{4},
\end{equation*}
and using the monotonicity of $g$ and (\ref{condition for g-1}) again, we get
\begin{equation}\label{2.17}
g^{\alpha } ( 2^{n-1} k  )\ge g^{\alpha } \left( \frac{h-k}{4}   \right)\ge \frac{1}{4^{\mu \alpha } } g^{\alpha } ( h-k  ).
\end{equation}
Combining (\ref{2.14}), (\ref{2.15}), (\ref{New-new 1}) and (\ref{2.17}) we arrive at
\begin{equation*}
\varphi (h)
\le \frac{\tilde c 4^{\mu \alpha }}{g^{\alpha }( h-k )}[\varphi ( k )]^{\beta },
\end{equation*}
which shows that (\ref{stampacchia_assumption generalized-generalization-new}) holds true for $c =\tilde c 4 ^{\mu \alpha}$.

{\bf Case 2: $2k\ge h>k$}.
By Corollary \ref{Stampacchia Lemma, Gao Zhang and Ma generalized,generalized-new}-(iii),  (\ref{stampacchia_assumption generalized-generalization-new}) implies
$$
\varphi (k) \le \bar c \left(\frac{1}{g\left ( k \right ) } \right) ^{\frac{\alpha} {1-\beta}},
$$
where
\begin{equation}\label{c_2-new}
\bar c = 2^{\frac { \mu\alpha \left ( 2-\beta  \right ) }{(1-\beta)^2 }} \left\{
c ^\frac {1}{1-\beta} +g( k_{0})^{\frac {\alpha} {1-\beta}}
 \varphi(k_0) \right\} .
\end{equation}
%
Since $\varphi$ decreases we have 
\begin{equation*}
\varphi ( h ) \le \varphi   ( k   ) =  [ \varphi   ( k   )    ]^{1-\beta } [ \varphi   ( k   )    ]^{\beta }
\le \frac {\bar c  ^{1-\beta}}{ g^{\alpha}(k) } [\varphi (k)]^\beta.
\end{equation*}
Since $2k\ge h$ we get $k\ge h-k$, and due to the monotonicity of $g$, we obtain $g^{\alpha}  ( k   )\ge  g^{\alpha}(h-k)$, then
\begin{equation*}
\varphi  ( h   )\le \frac{\bar c^{1-\beta }  } {g^{\alpha}  ( h-k   )  }  [ \varphi   ( k   )    ]^{\beta } .
\end{equation*}
In both cases we have obtained (\ref{stampacchia_assumption generalized-generalization-new}) with $ c =\max  \left\{\tilde c4^{\mu \alpha },
\bar c^{1-\beta } \right  \} $ with $\bar c$ be as in (\ref{c_2-new}).
\end{proof}

Let us now consider the remaining two cases $\beta =1$ and $\beta >1$.
The following theorems say that, for these two cases of $\beta$, the assumptions (\ref{stampacchia_assumption generalized-generalization-new}) and (\ref{new-new}) are not equivalent.

\begin{theorem}\label{remark 2}
Let $g:[0,+\infty)  \rightarrow [0,+\infty)$ be a function satisfying the assumptions (${\cal G}_1$), (${\cal G}_2$)  and (${\cal G}_3$).
Let $\varphi : [k_{0},+  \infty )\to [0,+  \infty )$ be non-increasing. Let $\alpha \in \left ( 0,+  \infty  \right ) $ be a constant and $\beta =1$. Then
\begin{equation*}
	(\ref{stampacchia_assumption generalized-generalization-new})\not\Leftarrow (\ref{new-new}).
\end{equation*}
More precisely, the function
\begin{equation}\label{phi}
\varphi (k)=e ^{-(\ln k)^2}, \ \ k\ge 1,
\end{equation}
verifies (\ref{new-new}) with $k_0=1$, $\beta =1$, $\tilde c=2 ^{-\ln 2}$, $g(k)=k^{\ln 2}$ and $\alpha =2$,  but it does not
satisfy (\ref{stampacchia_assumption generalized-generalization-new}) with $\beta =1$, for any choice of the two constants $\alpha >0$, $c>0$ and any choice of the function $g(t)$.
\end{theorem}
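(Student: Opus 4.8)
The plan is to dispatch the two halves of the statement separately. For the first half one simply computes. The function $\varphi(k)=e^{-(\ln k)^2}$ is non-increasing on $[1,+\infty)$ since $\varphi'(k)=-\tfrac{2\ln k}{k}\,\varphi(k)\le 0$ there, with $\varphi(1)=1$ and $\varphi(k)\to 0$. Expanding $(\ln(2k))^2=(\ln 2)^2+2\ln 2\,\ln k+(\ln k)^2$ and using $e^{-(\ln 2)^2}=2^{-\ln 2}$ and $e^{-2\ln 2\,\ln k}=k^{-2\ln 2}$ gives
$$\varphi(2k)=2^{-\ln 2}\,k^{-2\ln 2}\,\varphi(k)=\frac{2^{-\ln 2}}{\bigl(k^{\ln 2}\bigr)^{2}}\,[\varphi(k)]^{1},$$
so (\ref{new-new}) holds, in fact with equality, for every $k\ge 1$ with $k_0=1$, $\beta=1$, $\tilde c=2^{-\ln 2}$, $g(k)=k^{\ln 2}$ and $\alpha=2$; here $g(k)=k^{\ln 2}$ is merely a witness and need not satisfy $({\cal G}_1)$, since (\ref{new-new}) imposes no structural requirement on $g$.

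For the second half I argue by contradiction. Suppose $\varphi$ satisfied (\ref{stampacchia_assumption generalized-generalization-new}) with $\beta=1$ for some constants $c,\alpha>0$, some $k_0\ge 1$, and some $g$ obeying $({\cal G}_1)$--$({\cal G}_3)$. Then Corollary \ref{Stampacchia Lemma, Gao Zhang and Ma generalized,generalized-new}(ii) applies: its hypotheses are met, there being a finite $\tilde\tau>0$ that satisfies (\ref{c2.3 new new}) and (\ref{tau,generalized-new new}), because $\varphi(k)\to 0$ while $\max\{k_0,(ce)^{1/\alpha}/g'(0_+)\}$ is finite (recall $g'(0_+)>0$). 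Hence $\varphi(k)\le \varphi(k_0)\,e^{1-(k-k_0)/\tilde\tau}$ for all $k\ge k_0$. (Alternatively, self-containedly: a non-constant convex $g$ with $g(0)=0$ and $g>0$ on $(0,+\infty)$ is unbounded, so one may fix $d>0$ with $g^{\alpha}(d)\ge e c$, iterate (\ref{stampacchia_assumption generalized-generalization-new}) along $k_j=k_0+jd$ to obtain $\varphi(k_0+nd)\le (c/g^{\alpha}(d))^{n}\varphi(k_0)\le e^{-n}\varphi(k_0)$, and use the monotonicity of $\varphi$ to conclude $\varphi(k)\le e\,\varphi(k_0)\,e^{-(k-k_0)/d}$.) In either case $\varphi$ decays at least exponentially.

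This contradicts the identity $\varphi(k)=e^{-(\ln k)^2}$: taking logarithms in an inequality of the form $e^{-(\ln k)^2}\le C_1 e^{-C_2(k-k_0)}$ with $C_1,C_2>0$ would force $C_2(k-k_0)\le (\ln k)^2+\ln C_1$ for all large $k$, which is impossible since $(k-k_0)/(\ln k)^2\to+\infty$. Hence no admissible $g$ and no constants $c,\alpha>0$ can make (\ref{stampacchia_assumption generalized-generalization-new}) hold for this $\varphi$ with $\beta=1$, which is the claim. There is no genuine obstacle in this argument; the only points demanding care are the exact constant bookkeeping in the verification of (\ref{new-new}) and the observation that the structural hypotheses on $g$ (convexity together with $g(0)=0$) already force the unboundedness needed to extract the exponential decay that $\varphi$ visibly lacks.
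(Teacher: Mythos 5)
Your proof is correct and follows the paper's exact route: direct expansion of $(\ln(2k))^2$ to show (\ref{new-new}) holds with equality for the stated parameters, then a contradiction argument using the exponential-decay conclusion of Corollary \ref{Stampacchia Lemma, Gao Zhang and Ma generalized,generalized-new}-(ii) against the sub-exponential decay of $e^{-(\ln k)^2}$. The optional self-contained iteration you sketch (fixing $d$ with $g^\alpha(d)\ge ec$, available because convexity together with $g(0)=0<g(t_0)$ forces $g(t)\ge (g(t_0)/t_0)\,t\to+\infty$, and marching along $k_0+jd$) is a clean way to reach the same exponential bound without invoking the Corollary wholesale or using $g'(0_+)>0$, but it rests on the same underlying idea rather than a genuinely different route.
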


\begin{proof} We take $\varphi $ be as in (\ref{phi}) and we have
$$
\begin{array}{llll}
&\displaystyle \varphi (2k) =e ^{-(\ln (2k))^2} =e ^{-(\ln 2 +\ln k)^2} =e ^{-[(\ln 2)^2 +2 \ln 2 \ln k +(\ln k)^2]}\\
=&\displaystyle e ^{-\ln 2 (\ln 2 +2\ln k)} e ^{-(\ln k)^2}= e ^{\ln (2k^2) ^{-\ln 2}} \varphi (k) =\left(\frac 1 {2k^2} \right)^{\ln 2}
 \varphi (k) =\frac {2^{-\ln 2}}{(k ^{\ln 2})^2 }\varphi(k).
\end{array}
$$
This shows that (\ref{new-new}) holds true with $k_0=1$, $\beta =1$, $\tilde c=2 ^{-\ln 2}$, $g(k)=k^{\ln 2}$ and $\alpha =2 $ .

Now we shall show that (\ref{stampacchia_assumption generalized-generalization-new}) does not hold true with $\beta =1$:
by contradiction, if (\ref{stampacchia_assumption generalized-generalization-new})
would hold true with $\beta =1$, then Corollary \ref{Stampacchia Lemma, Gao Zhang and Ma generalized,generalized-new}-(ii)
would guarantees (\ref{generalied statement_stampacchia_beta=1,generalized-new}), then
$$
\varphi (k) \le \hat c e ^{-\lambda k}
$$
where
$$
\hat c= \varphi (k_0) e ^{1+\frac {k_0}{\tilde \tau}}  \ \mbox { and } \ \lambda =\frac {1}{\tilde \tau}.
$$
That is
$$
e ^{-(\ln k)^2} \le  \hat c e ^{-\lambda k} ,
$$
this is equivalent to
$$
e ^{\lambda k -(\ln k)^2} \le \hat c,
$$
but this is false for sufficient large $k$ since the left hand side approaches to $+\infty$ as $k\rightarrow +\infty$.
\end{proof}

\begin{theorem}\label{remark 3}
Let $g:[0,+\infty)  \rightarrow [0,+\infty)$ be a function satisfying the assumptions (${\cal G}_1$), (${\cal G}_2$)  and (${\cal G}_3$).
Let $\varphi : [k_{0},+  \infty )\to [0,+  \infty )$ be non-increasing. Let $\alpha \in \left ( 0,+  \infty  \right ) $ and $\beta >1$ be constants. Then
\begin{equation*}
	(\ref{stampacchia_assumption generalized-generalization-new})\not\Leftarrow (\ref{new-new}).
\end{equation*}
More precisely, the function
\begin{equation}\label{phi-2}
\varphi (k)=e ^{-k}, \ \ k\ge k_0,
\end{equation}
verifies (\ref{new-new}) with any $\alpha >0$, a suitable $k_0= k_0(\alpha)$, $\beta=\frac 3 2$, $\tilde c=1$ and $g(k)=k^2$,
 but it does not satisfy (\ref{stampacchia_assumption generalized-generalization-new}) for any choice of the three constants $\beta >1$, $\alpha >0$,
  $c >0$ and any choice of the function $g(t)$.
\end{theorem}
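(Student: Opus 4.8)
\noindent{\it Proof proposal.} The statement splits into a direct verification and a non-existence claim, and I would treat them in turn, the second part following the same pattern as the proof of Theorem \ref{remark 2}.

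\emph{Verification of (\ref{new-new}).} With $\varphi(k)=e^{-k}$, $g(k)=k^{2}$, $\beta=\tfrac32$ and $\tilde c=1$ one has $\varphi(2k)=e^{-2k}$, while the right-hand side of (\ref{new-new}) equals $k^{-2\alpha}e^{-3k/2}$; hence (\ref{new-new}) is equivalent to $e^{-k/2}\le k^{-2\alpha}$, i.e.\ to $k^{2\alpha}\le e^{k/2}$. Since $k^{2\alpha}/e^{k/2}\to 0$ as $k\to+\infty$ for every fixed $\alpha>0$, this last inequality holds for all $k\ge k_{0}$ once $k_{0}=k_{0}(\alpha)$ is taken large enough, which is exactly the asserted dependence. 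This step is pure algebra and presents no difficulty.

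\emph{Failure of (\ref{stampacchia_assumption generalized-generalization-new}).} I would argue by contradiction. Suppose (\ref{stampacchia_assumption generalized-generalization-new}) held for $\varphi$ as in (\ref{phi-2}) with some $\beta>1$, $\alpha>0$, $c>0$ and some $g$ satisfying (${\cal G}_1$)--(${\cal G}_3$). Such a $g$ is unbounded (a non-constant convex non-decreasing function with $g(0)=0$ has $g'$ non-decreasing and not identically zero, hence grows at least linearly), so both requirements on $L$ in Corollary \ref{Stampacchia Lemma, Gao Zhang and Ma generalized,generalized-new}-(i) can be met, and that corollary would force $\varphi(2L)=0$ for some finite $L$; but $\varphi(2L)=e^{-2L}>0$, a contradiction. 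Alternatively, and perhaps more transparently, one can avoid the corollary altogether: fixing any $t>0$ and putting $h=k+t$ in (\ref{stampacchia_assumption generalized-generalization-new}) gives $e^{-(k+t)}\le \frac{c}{g^{\alpha}(t)}\,e^{-\beta k}$ for all $k\ge k_{0}$, i.e.\ $e^{(\beta-1)k}\le c\,e^{t}/g^{\alpha}(t)$; the right-hand side is a fixed finite positive number (here one uses only $g(t)>0$ from (${\cal G}_1$)), whereas the left-hand side tends to $+\infty$ as $k\to+\infty$ since $\beta>1$ --- again absurd.

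The only point requiring a little care is the universal quantifier ``for any choice of the function $g$'': both versions of the argument use nothing about $g$ beyond positivity and finiteness on $(0,+\infty)$ (and, in the corollary version, unboundedness), all guaranteed by (${\cal G}_1$), so the conclusion holds uniformly over all admissible $g$. I do not anticipate any genuine obstacle here; the contradiction is driven entirely by the super-exponential growth of the inverse of $\varphi$ against the at-most-polynomial damping a single factor $g^{-\alpha}$ can provide once $\beta>1$.
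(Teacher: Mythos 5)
Your proof is correct and, in its first version, follows exactly the paper's route: verify (\ref{new-new}) directly via the factorization $e^{-2k}=e^{-k/2}[\varphi(k)]^{3/2}$ with $k_0(\alpha)$ chosen so that $k^{2\alpha}\le e^{k/2}$, and then obtain a contradiction from Corollary \ref{Stampacchia Lemma, Gao Zhang and Ma generalized,generalized-new}-(i), which would force $\varphi(2L)=0$ while $\varphi$ is everywhere positive. You add one small refinement the paper leaves implicit: you explain why an admissible $g$ is unbounded (indeed, by (${\cal G}_1$) and (${\cal G}_3$), convexity gives $g(t)\ge g'(0_+)t\to\infty$), so the requirement $g(L)\ge\text{(const)}$ in the corollary can always be met.

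Your second, alternative argument is a genuine improvement in elementariness. Fixing $t>0$, taking $h=k+t$ in (\ref{stampacchia_assumption generalized-generalization-new}), and rearranging to $e^{(\beta-1)k}\le c\,e^{t}/g^{\alpha}(t)$ exposes the contradiction directly from the super-exponential growth versus the fixed right-hand side, without invoking the full machinery of Corollary \ref{Stampacchia Lemma, Gao Zhang and Ma generalized,generalized-new}. This uses only $g(t)>0$ from (${\cal G}_1$), so it applies uniformly over all admissible $g$, and it isolates exactly what drives the failure: when $\beta>1$, any single factor $g^{-\alpha}(h-k)$ with $h-k$ bounded cannot match the $\beta$-th power gain in $\varphi$. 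The paper's corollary-based argument has the pedagogical advantage of illustrating the sharpness of part (i) of the corollary, but your direct argument is shorter and self-contained. Both are valid; no gap.
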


\begin{proof}
Let us take $\varphi $ as in (\ref{phi-2}), then
\begin{equation}\label{2.22}
\varphi (2k) =e ^{-2k} =e ^{-\frac k 2 }e ^{-\frac {3k}{2}} =e ^{-\frac k 2 } [\varphi (k)] ^{\frac 3 2 }=\frac
 {k ^{2\alpha}}{e ^{\frac k 2}} \frac {1}{ (k^2)^\alpha}[\varphi (k)] ^{\frac 3 2 }.
\end{equation}
Note that there exists $k_0=k_0(\alpha)\ge 1$ such that
$$
\frac {k ^{2\alpha}}{e ^{\frac k 2}} \le 1, \ \ \forall k\ge k_0,
$$
then (\ref{2.22}) yields
$$
\varphi (2k) \le  \frac {1}{ (k^2)^\alpha}[\varphi (k)] ^{\frac 3 2 }, \ \ \forall k\ge k_0,
$$
so that $\varphi$ verifies (\ref{new-new}) with any $\alpha >0$, with a suitable $k_0=k_0(\alpha)\ge 1$, $\beta=\frac 3 2$, $\tilde c=1$, and $g(k)=k^2$.

We claim that such a $\varphi$ does not satisfy (\ref{stampacchia_assumption generalized-generalization-new})
for any choice of the constants $\beta >1$, $\alpha >0$, $c >0$, $k_0>0$, and for any choice of the function $g(k)$. Indeed, if
 such a $\varphi$ would satisfy (\ref{stampacchia_assumption generalized-generalization-new}), then Corollary \ref{Stampacchia Lemma, Gao Zhang and Ma generalized,generalized-new}-(i) would imply
(\ref{statement_stampacchia_beta>1,generalized-new}), i.e.,
$$
\varphi(2L)=0
$$
for a suitable $L\ge 0$: this gives a contradiction since $\varphi (k)>0$ for every $k>0$.
\end{proof}


We now give an application of Corollary \ref{Stampacchia Lemma, Gao Zhang and Ma generalized,generalized-new}
 to regularity properties of variational integrals with nonstandard growth conditions. Let us consider the variational integral
\begin{equation}\label{variational integral}
{\cal J} (u;\Omega) =\int_\Omega f(x,Du(x))dx,
\end{equation}
where $\Omega$ is a bounded open subset of $\mathbb R^n$, $n\ge 2$, $u: \Omega\rightarrow \mathbb R$ and $f:\Omega \times
\mathbb R^n \rightarrow \mathbb R$ is a Carath\'eodory
function, that is, $x\mapsto f(x,z)$ is measurable and $z\mapsto f(x,z)$ is continuous.

We assume $g^ p$ growth from below: there exist constants $p\in (1,n)$ and $\nu \in (0,+\infty)$, and a function $a(x):
\Omega \rightarrow [0,+\infty) $ such that
\begin{equation}\label{nonstandard growth condition}
\nu g^p (|z|) -a(x) \le f(x,z)
\end{equation}
for almost all $x\in \Omega$ and all $z\in \mathbb R^n$, where $g(t)$ is the function introduced in Section 1.
We fix a boundary datum $u_* :\Omega \rightarrow \mathbb R$ such that $u_* \in W^{1,1} (\Omega)$
and
\begin{equation}\label{L1 integrable}
 f(x, Du_*(x)) \in L^1(\Omega).
\end{equation}
The set of competing functions for the variational integral (\ref{variational integral}) is
$$
{\cal C} = \left\{w \in u_* +W_0^{1,1} (\Omega) \mbox { such that } 
f(x, Dw(x)) \in L^1(\Omega)\right\}.
$$
A function $u:\Omega \rightarrow \mathbb R$ is a minimizer for (\ref{variational integral}) if $u\in \cal C$ and it verifies, for all $w\in \cal C$,
\begin{equation}\label{minimallity}
{\cal J} (u;\Omega) \le {\cal J} (w;\Omega).
\end{equation}

We deal with regularity of minimizers of the variational integral (\ref{variational integral}). Related results have
been obtained in \cite{Leonetti-Petricca}, where the special case $g(t)=t$ is dealt with. Now we consider a  more general case:
there is a function $g(t)$ in (\ref{nonstandard growth condition}). We ask the following question:
if the boundary datum $u_*$ makes the density $f(x,Du_*(x))$ more integrable than (\ref{L1 integrable}) requires, does the minimizer
$u$ enjoy higher integrability?
The answer is positive and we now prove the following

\begin{theorem}\label{theorem 3.1}
Let $g$ be a function satisfying the assumptions (${\cal G}_1$), (${\cal G}_2$) and (${\cal G}_3$). Assume that $a(x) ,f(x,Du_*(x))
\in L^\sigma (\Omega)$ where $\sigma \in (1,+\infty)$. If $u\in \cal C$
minimizes the variational integral (\ref{variational integral}) under (\ref{nonstandard growth condition}), then
\begin{equation}\label{3.5-new}
\begin{array}{llll}
&\displaystyle \sigma >\frac n p \Rightarrow u-u_*\in L^\infty (\Omega); \\
&\displaystyle  \sigma =\frac n p \Rightarrow  \exists \tau (n,p,\mu,\nu,\|f(x,Du_*)+a\|_{L^\sigma (\Omega)}) >0
\mbox { s.t. } e ^{\tau |u-u_*|} \in L^1 (\Omega); \\
&\displaystyle \sigma <\frac n p \Rightarrow g(|u-u_*|) \in L_w ^{\frac {np\sigma }{n-p\sigma }} (\Omega).
\end{array}
\end{equation}
Note that $\frac {np\sigma }{n-p\sigma } >p*= \frac {np}{n-p}$ and $u-u_*\in L^\infty (\Omega)$ is equivalent to $g(u-u_*) \in L^\infty (\Omega)$.
\end{theorem}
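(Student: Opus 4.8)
The plan is to run a De Giorgi--Stampacchia iteration on the super-level sets of $v:=u-u_{*}$ and then to invoke Corollary~\ref{Stampacchia Lemma, Gao Zhang and Ma generalized,generalized-new}. Write $F:=f(x,Du_{*})+a\in L^{\sigma}(\Omega)$; the growth condition (\ref{nonstandard growth condition}) together with $u,u_{*}\in{\cal C}$ yields $g^{p}(|Du|),\,g^{p}(|Du_{*}|)\in L^{1}(\Omega)$ and, more precisely, $\nu\,g^{p}(|Du_{*}|)\le F$ a.e. For $k>0$ put $A_{k}:=\{x\in\Omega:|v(x)|>k\}$ and $\varphi(k):=|A_{k}|$; since $v\in W_{0}^{1,1}(\Omega)\subset L^{1}(\Omega)$, the function $\varphi$ is non-increasing and $\varphi(k)\to 0$ as $k\to+\infty$. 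First I would fix $k>0$ and test the minimality (\ref{minimallity}) with $w:=u_{*}+T_{k}(v)$, where $T_{k}$ is the truncation at height $k$: then $w\in{\cal C}$, $Dw=Du$ on $\Omega\setminus A_{k}$ and $Dw=Du_{*}$ on $A_{k}$, so cancelling the integrals over $\Omega\setminus A_{k}$ and applying (\ref{nonstandard growth condition}) on $A_{k}$ gives
$$
\nu\int_{A_{k}}g^{p}(|Du|)\,dx\ \le\ \int_{A_{k}}F\,dx .
$$

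Next I would turn this gradient bound into a bound for $\varphi(h)$, $h>k$. For $M>0$ set $\phi_{M}:=\min\{(|v|-k)_{+},M\}\in W_{0}^{1,1}(\Omega)$, a bounded function supported in $A_{k}$, so $g(\phi_{M})\in W_{0}^{1,1}(\Omega)$ with finite norm. The Sobolev inequality $\|z\|_{L^{n/(n-1)}(\Omega)}\le c_{n}\|Dz\|_{L^{1}(\Omega)}$ applied to $z=g(\phi_{M})$, together with Lemma~\ref{new lemma} and (\ref{condition for g-3}), which give $g'(\phi_{M})|D\phi_{M}|\le\mu\bigl(g(\phi_{M})+g(|D\phi_{M}|)\bigr)$ and $|D\phi_{M}|\le|Dv|$ on $A_{k}$, produces
$$
\|g(\phi_{M})\|_{L^{n/(n-1)}(\Omega)}\ \le\ c_{n}\mu\left(\int_{A_{k}}g(\phi_{M})\,dx+\int_{A_{k}}g(|Dv|)\,dx\right).
$$
Since $g(\phi_{M})$ is supported in $A_{k}$, H\"older's inequality gives $\int_{A_{k}}g(\phi_{M})\le\varphi(k)^{1/n}\|g(\phi_{M})\|_{L^{n/(n-1)}(\Omega)}$, so fixing $k_{1}>0$ with $c_{n}\mu\,\varphi(k_{1})^{1/n}\le\frac12$ lets one absorb the first term for every $k\ge k_{1}$. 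For $h>k$ and any $M\ge h-k$ one has $\phi_{M}\equiv h-k$ on $A_{h}$, whence $g(h-k)\,\varphi(h)^{(n-1)/n}\le\|g(\phi_{M})\|_{L^{n/(n-1)}(\Omega)}$. Finally $g(|Dv|)\le 2^{\mu}\bigl(g(|Du|)+g(|Du_{*}|)\bigr)$ by (\ref{condition for g-1}), and H\"older in the form $\int_{A_{k}}g(|Du|)\le\varphi(k)^{1/p'}\bigl(\int_{A_{k}}g^{p}(|Du|)\bigr)^{1/p}$ (and likewise for $u_{*}$, using $\nu g^{p}(|Du_{*}|)\le F$), combined with Step~1 and with $\int_{A_{k}}F\le\|F\|_{L^{\sigma}(\Omega)}\varphi(k)^{1/\sigma'}$, gives
$$
\varphi(h)\ \le\ \frac{c}{g^{\alpha}(h-k)}\,[\varphi(k)]^{\beta},\qquad h>k\ge k_{1},
$$
with $\alpha=\dfrac{n}{n-1}$, $\beta=\dfrac{n}{n-1}\left(\dfrac1{p'}+\dfrac1{p\sigma'}\right)=\dfrac{n}{n-1}\cdot\dfrac{p\sigma-1}{p\sigma}$, and $c$ depending only on $n,p,\mu,\nu$ and $\|F\|_{L^{\sigma}(\Omega)}$.

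Then I would feed this into Corollary~\ref{Stampacchia Lemma, Gao Zhang and Ma generalized,generalized-new} with $k_{0}=k_{1}$, noting the elementary facts $\beta>1\Leftrightarrow p\sigma>n$, $\beta=1\Leftrightarrow p\sigma=n$, $\beta<1\Leftrightarrow p\sigma<n$, and $\frac{\alpha}{1-\beta}=\frac{np\sigma}{n-p\sigma}$ when $\sigma<n/p$. If $\sigma>n/p$, part (i) gives $\varphi(2L)=0$ for $L$ large enough (such $L$ exists since $g(L)\to+\infty$), i.e.\ $v\in L^{\infty}(\Omega)$. If $\sigma=n/p$, part (ii) gives $\varphi(k)\le\varphi(k_{1})\,e^{1-(k-k_{1})/\tilde\tau}$ for $k\ge k_{1}$, and since $\int_{\Omega}e^{\tau|v|}\,dx=|\Omega|+\tau\int_{0}^{\infty}e^{\tau t}\varphi(t)\,dt$ this is finite for every $0<\tau<1/\tilde\tau$, so $e^{\tau|v|}\in L^{1}(\Omega)$ with $\tilde\tau$, hence $\tau$, depending on the data as stated. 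If $\sigma<n/p$, part (iii) gives $\varphi(k)\le C\,g(k)^{-np\sigma/(n-p\sigma)}$ for $k\ge k_{1}$; since $g$ is, by convexity and $g'(0_{+})>0$, a continuous strictly increasing bijection of $[0,+\infty)$ with $g(0)=0$, this says $|\{g(|v|)>t\}|\le C\,t^{-np\sigma/(n-p\sigma)}$ for $t\ge g(k_{1})$, the smaller levels being bounded by $|\Omega|$, so $g(|v|)\in L_{w}^{np\sigma/(n-p\sigma)}(\Omega)$. The two closing remarks of the statement are then immediate: $v\in L^{\infty}$ is equivalent to $g(v)\in L^{\infty}$ because $g$ is an increasing bijection of $[0,+\infty)$ vanishing at $0$, and $\frac{np\sigma}{n-p\sigma}>p^{*}\Leftrightarrow\sigma>1$.

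The main obstacle is Step~2. A merely qualitative embedding such as Lemma~\ref{lemma 2} is not enough, because the Stampacchia iteration requires the passage from the gradient bound of Step~1 to the bound on $\varphi(h)$ to be quantitative, with an explicit constant and the right exponent $\beta$ on $\varphi(k)$. This forces one to use the $W^{1,1}$-Sobolev embedding directly on $g(\phi_{M})$, to linearize $g'(\phi_{M})|D\phi_{M}|$ into $g(\phi_{M})+g(|D\phi_{M}|)$ by means of Lemma~\ref{new lemma} and (\ref{condition for g-3}), and to absorb the lower-order term $\int_{A_{k}}g(\phi_{M})$ using $\varphi(k)\to 0$; this is exactly why the conclusions concern only the tail of $|u-u_{*}|$, and why the truncation parameter $M$ (needed only to secure finiteness a priori) may be disposed of by simply taking $M\ge h-k$.
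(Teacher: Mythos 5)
Your argument is correct and leads to the stated conclusions, but it takes a genuinely different route through the iteration. The paper estimates $\int_{A_k}g^{p}(|Du-Du_*|)\,dx$ directly from the minimality and the $g^{p}$-growth, then treats $g((|u-u_*|-k)_{+})$ as a $W_0^{1,p}$ function and applies the $W^{1,p}\hookrightarrow L^{p^*}$ Sobolev embedding, absorbing the zero-order term by a second Sobolev step through $p_{*}=\tfrac{np}{n+p}$; it arrives at the Stampacchia inequality with $\alpha=p^{*}$ and $\beta=\bigl(1-\tfrac1\sigma\bigr)\tfrac{p^{*}}{p}$. You instead treat $g(\phi_{M})$ as a $W_0^{1,1}$ function, use the endpoint embedding $W^{1,1}\hookrightarrow L^{n/(n-1)}$, linearize $|Dg(\phi_{M})|$ with Lemma~\ref{new lemma} and (\ref{condition for g-3}) exactly as the paper does in (\ref{No.2}), absorb the zero-order term with a simple H\"older factor $\varphi(k)^{1/n}$, and downgrade the $L^{p}$ gradient bound to an $L^{1}$ one by another H\"older, landing at $\alpha=\tfrac{n}{n-1}$, $\beta=\tfrac{n}{n-1}\cdot\tfrac{p\sigma-1}{p\sigma}$. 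The two pairs $(\alpha,\beta)$ are different, but an elementary computation shows they produce the same thresholds ($\beta\gtrless1\Leftrightarrow p\sigma\gtrless n$) and the same Marcinkiewicz exponent $\tfrac{\alpha}{1-\beta}=\tfrac{np\sigma}{n-p\sigma}$, so Corollary~\ref{Stampacchia Lemma, Gao Zhang and Ma generalized,generalized-new} yields the same three conclusions. Your comparison map $w=u_{*}+T_{k}(v)$ is in fact identical, case by case, to the paper's $w=u-G_{k}(u-u_{*})$, so the minimality step is the same in substance. Two small stylistic economies in your write-up: the constant in $g(|Dv|)\le 2^{\mu}(g(|Du|)+g(|Du_{*}|))$ can be tightened to $2^{\mu-1}$, and in the $\sigma=n/p$ case you replace the paper's discrete summation criterion (Lemma 3.11 of \cite{Boccardo-Croce}) by the layer-cake identity, which is a matter of taste. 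One thing the $W^{1,1}$ route buys you is a smaller $\alpha$ independent of $p$ and a cleaner absorption; the truncation parameter $M$ is a harmless device to guarantee a priori finiteness and is correctly disposed of by taking $M\ge h-k$.
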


In (\ref{3.5-new}) $L_{w} ^m (\Omega)$ is the Marcinkiewicz space (see Definition 3.8 in \cite{Boccardo-Croce}), which consists of all measurable functions $f$ on $\Omega$ with the following property: there exists a constant $\gamma$ such that
\begin{equation}\label{weak space}
|\{x\in \Omega: |f|>\lambda \}| \le \frac {\gamma}{ \lambda ^m}, \ \ \forall \lambda >0,
\end{equation}
where $|E|$ is the Lebesgue measure of the set $E$. The norm of $f\in L_{w} ^m (\Omega)$ is defined by
$$
\|f\|_{L_{weak}^m (\Omega)} ^m =\inf \left\{ \gamma>0, (\ref{weak space}) \mbox { holds} \right\}.
$$
A H\"older inequality holds true for $f\in L_{w}^m (\Omega)$, $m>1$: there exists $B=B( \|f\| _{L_{w} ^m (\Omega)},m)>0$ such that for every measurable subset $E\subset \Omega$,
\begin{equation}\label{Holder}
\int_E |f| dx \le B |E| ^{1-\frac 1 m },
\end{equation}
see Proposition 3.13 in \cite{Boccardo-Croce}. For some basic properties of functions in $L_w^m (\Omega)$, we refer the reader to \cite{Boccardo-Croce}.

We remark that, in the special case $g(t)=t$, Theorem \ref{theorem 3.1} is the same as Theorem 1.1 in \cite{Leonetti-Petricca}.

\vspace{2mm}

Let us take a special case $g(t) =t \ln (e+t)$ as in (\ref{condition for g-2}). In this case, (\ref{nonstandard growth condition}) becomes
\begin{equation}\label{nonstandard growth condition-1}
\nu |z|^p \ln ^p (e+|z|)-a(x) \le f(x,z)
\end{equation}
for almost all $x\in \Omega$ and for all $z\in \mathbb R^n$. We have

\begin{corollary}\label{corollary-1}
Assume that $a(x) ,f(x,Du_*(x)) \in L^\sigma
(\Omega)$ where $\sigma \in (1,+\infty)$. If $u\in \cal C$
minimizes the variational integral (\ref{variational integral}) under (\ref{nonstandard growth condition-1}), then
\begin{equation*}
\begin{array}{llll}
&\displaystyle \sigma >\frac n p \Rightarrow |u-u_*| \ln (e+|u-u_*|) \in L^\infty (\Omega);\\
&\displaystyle  \sigma =\frac n p \Rightarrow  \exists \tilde \tau (n,p,\mu,\nu,\|f(x,Du_*)+a\|_{L^\sigma (\Omega)}) >0 \mbox { such that } e ^{\frac {|u-u_*|}{2\tilde \tau}}  \in L^1 (\Omega); \\
&\displaystyle \sigma <\frac n p \Rightarrow |u-u_*| \ln (e+|u-u_*|) \in L_w ^{\frac {np\sigma }{n-p\sigma }} (\Omega).
\end{array}
\end{equation*}
\end{corollary}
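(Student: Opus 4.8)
The plan is to obtain Corollary~\ref{corollary-1} as the specialization of Theorem~\ref{theorem 3.1} to the particular choice $g(t)=t\ln(e+t)$. First I would recall that, as already verified in Section~1 around \eqref{condition for g-2}--\eqref{t1.2}, this $g$ is $C^1$, convex and non-decreasing, satisfies $g(0)=0$ and $g(t)>0$ for $t>0$, obeys \eqref{condition for g-1} with $\mu=2$, and has $g'(0_+)=1>0$; hence it fulfills $({\cal G}_1)$, $({\cal G}_2)$ and $({\cal G}_3)$. Moreover, with this $g$ the growth condition \eqref{nonstandard growth condition} becomes exactly \eqref{nonstandard growth condition-1}. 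Therefore all hypotheses of Theorem~\ref{theorem 3.1} are met and its conclusion \eqref{3.5-new} applies verbatim with this $g$.

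Second, I would translate \eqref{3.5-new} into the three assertions of the corollary. Since $g(s)=s\ln(e+s)$, the statements $g(|u-u_*|)\in L^\infty(\Omega)$ and $g(|u-u_*|)\in L_w^{np\sigma/(n-p\sigma)}(\Omega)$ read, word for word, $|u-u_*|\ln(e+|u-u_*|)\in L^\infty(\Omega)$ and $|u-u_*|\ln(e+|u-u_*|)\in L_w^{np\sigma/(n-p\sigma)}(\Omega)$, which settles the cases $\sigma>n/p$ and $\sigma<n/p$. For the borderline case $\sigma=n/p$, Theorem~\ref{theorem 3.1} yields $e^{\tau|u-u_*|}\in L^1(\Omega)$ for a suitable $\tau>0$; inspecting the proof, this $\tau$ comes from the exponential decay of the relevant superlevel function supplied by Corollary~\ref{Stampacchia Lemma, Gao Zhang and Ma generalized,generalized-new}-(ii), namely an estimate of the form $\varphi(k)\le\varphi(k_0)e^{1-(k-k_0)/\tilde\tau}$ with $\tilde\tau$ depending only on $n,p,\mu,\nu$ and $\|f(x,Du_*)+a\|_{L^\sigma(\Omega)}$. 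Writing $\int_\Omega e^{\lambda|u-u_*|}\,dx$ through its distribution function and inserting this decay, the integral is finite for every $\lambda<1/\tilde\tau$; the choice $\lambda=1/(2\tilde\tau)$ then gives $e^{|u-u_*|/(2\tilde\tau)}\in L^1(\Omega)$, which is the stated form.

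There is no essential obstacle: the corollary is a direct reading of Theorem~\ref{theorem 3.1}. The only two points deserving a line of care are (i) confirming that the constant $\tilde\tau$ extracted from the argument genuinely depends only on the parameters listed in the statement, which follows from the explicit expression for the Stampacchia constant in Corollary~\ref{Stampacchia Lemma, Gao Zhang and Ma generalized,generalized-new}-(ii) together with the constants produced when deriving the Stampacchia-type inequality from \eqref{minimallity} and \eqref{nonstandard growth condition-1}; and (ii) retaining the harmless factor $1/2$ in the exponent so that the distribution-function integral $\int_0^{\infty}\lambda e^{\lambda t}\,|\{|u-u_*|>t\}|\,dt$ converges strictly. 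Both are routine.
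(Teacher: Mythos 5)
Your proof is correct and is essentially the same as the paper's: the paper presents Corollary~\ref{corollary-1} as the immediate specialization of Theorem~\ref{theorem 3.1} to $g(t)=t\ln(e+t)$, having already verified in Section~1 that this $g$ satisfies $({\cal G}_1)$--$({\cal G}_3)$ with $\mu=2$. Your extra remark that the $\tau$ in Theorem~\ref{theorem 3.1} is $1/(2\tilde\tau)$ matches what the proof of that theorem actually produces in its Case~2.
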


\vspace{2mm}

Assume now that we have $g^ p$ growth from below and $g^q$ growth from above: for some positive constants $p, q, \nu,K$
 with $1 < p < n$, $p \le q$, for some functions $a_1, a_2: \Omega \rightarrow [0,+\infty) $ we have
$$
\nu g^p(|z|) -a_1(x) \le f(x,z) \le K g^q(|z|) +a_2 (x)
$$
for almost all $x\in \Omega $ and all $z\in \mathbb R^n$, then
$$
\begin{array}{llll}
|f(x,z)| &\le \displaystyle K\left( g(|z|)+1\right) ^q  +a_1(x) +a_2(x)\\
& \le \displaystyle 2 ^{q-1}K g^q(|z|) +2 ^{q-1}K +a_1(x) +a_2(x) ,
\end{array}
$$
and the $\sigma$-integrability of $|f(x,Du_*(x)|$ is guaranteed by $g(|Du_*(x) |) \in L^{q\sigma} (\Omega)$ and $a_1,a_2\in L^\sigma $: this gives the following

\begin{corollary}\label{corollary}
Assume that $a_1(x),a_2(x) \in L^\sigma (\Omega)$ and $g(|Du_*(x) |)\in L^{q\sigma } (\Omega)$ where $\sigma \in (1,+\infty)$.
If $u\in \cal C$ minimizes the variational integral (\ref{variational integral}) under (\ref{nonstandard growth condition}), then the results in (\ref{3.5-new}) hold true.
\end{corollary}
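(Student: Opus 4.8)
The plan is to reduce Corollary~\ref{corollary} directly to Theorem~\ref{theorem 3.1}, exploiting that the two-sided growth bound in force before the statement already contains the one-sided condition~(\ref{nonstandard growth condition}). First I would set $a(x):=a_1(x)$ and observe that the lower inequality $\nu g^p(|z|)-a_1(x)\le f(x,z)$ is exactly~(\ref{nonstandard growth condition}) with this choice; since $a_1\in L^\sigma(\Omega)$ by hypothesis, the requirement of Theorem~\ref{theorem 3.1} on the datum $a$ is met.

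Next I would verify $f(x,Du_*(x))\in L^\sigma(\Omega)$. From the upper inequality, together with the elementary estimate $(t+1)^q\le 2^{q-1}(t^q+1)$, one obtains the pointwise bound $|f(x,Du_*(x))|\le 2^{q-1}K\,g^q(|Du_*(x)|)+2^{q-1}K+a_1(x)+a_2(x)$, already displayed before the statement. Raising to the power $\sigma$ and integrating over $\Omega$, it suffices that each of the four summands lies in $L^\sigma(\Omega)$: the constant $2^{q-1}K$ does because $|\Omega|<\infty$; the terms $a_1,a_2$ do by hypothesis; and $g^q(|Du_*(\cdot)|)\in L^\sigma(\Omega)$ because $\int_\Omega \bigl(g^q(|Du_*(x)|)\bigr)^\sigma dx=\int_\Omega g(|Du_*(x)|)^{q\sigma}\,dx<\infty$ is precisely the assumption $g(|Du_*(\cdot)|)\in L^{q\sigma}(\Omega)$. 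Hence $f(x,Du_*(x))\in L^\sigma(\Omega)$, and in particular $f(x,Du_*(x))\in L^1(\Omega)$, so the competing class $\cal C$ and the notion of minimizer are well posed.

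With both integrability hypotheses of Theorem~\ref{theorem 3.1} checked, and $u\in\cal C$ a minimizer of~(\ref{variational integral}) under~(\ref{nonstandard growth condition}) with $a=a_1$, Theorem~\ref{theorem 3.1} applies verbatim and yields the trichotomy~(\ref{3.5-new}), which is the assertion.

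I do not expect a genuine obstacle here: the corollary merely repackages Theorem~\ref{theorem 3.1} under more transparent hypotheses on the data, and the only point requiring a word of care is the identification $a=a_1$ in~(\ref{nonstandard growth condition}) together with the equivalence $g^q(|Du_*|)\in L^\sigma(\Omega)\Longleftrightarrow g(|Du_*|)\in L^{q\sigma}(\Omega)$; everything else follows from the boundedness of $\Omega$ and the stated memberships $a_1,a_2\in L^\sigma(\Omega)$.
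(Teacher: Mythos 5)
Your proposal is correct and follows the same route the paper takes: set $a=a_1$ so the lower growth bound gives~(\ref{nonstandard growth condition}), use the upper growth bound together with $a_1,a_2\in L^\sigma(\Omega)$ and $g(|Du_*|)\in L^{q\sigma}(\Omega)$ to get $f(x,Du_*)\in L^\sigma(\Omega)$, and then invoke Theorem~\ref{theorem 3.1}. This matches the paragraph the paper places immediately before the statement of Corollary~\ref{corollary}, which is the paper's entire justification.
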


\noindent{\it Proof of Theorem \ref{theorem 3.1}.}
Let $u\in \cal C$ be a minimizer of the variational integral (\ref{variational integral}) under (\ref{nonstandard growth condition}). For a constant $k >0$, let us define
$$
A_k  =\{x\in \Omega: |u(x)-u_*(x)| >k \}.
$$
We apply (\ref{condition for g-1}),  the convexity of $g$ and the $g^ p$ growth from below in (\ref{nonstandard growth condition}),  and we get
\begin{equation}\label{3.7}
\begin{array}{llll}
&\displaystyle \int_{A_k } g^p (|Du-Du_*|) dx\\
\le &\displaystyle  \int_{A_k } g^p \left(2 \frac {|Du|+|Du_*|} 2\right) dx \\
\le &\displaystyle 2^{\mu p} \int_{A_k } g^p \left(\frac {|Du|} 2 + \frac {|Du_*|}{2}\right) dx \\
\le &\displaystyle 2^{\mu p} \int_{A_k }\frac 1 {2^p} \left( g (|Du|)+ g(|Du_*|) \right) ^p dx \\
\le &\displaystyle 2^{\mu p-1} \int_{A_k}\left( g^p (|Du|) +g^p(|Du_*|)\right) dx \\
\le &\displaystyle \frac {2^{ \mu  p-1}} \nu \int_{A_k} \left(f(x,Du)+f(x,Du_*) +2a\right) dx.
\end{array}
\end{equation}
In order to control $\int_{A_k} f(x,Du)dx $ we need the minimality of $u$. Define the variation $v$ as follows
$$
v =G_k(u-u_*) =
\begin{cases}
u-u_*+k, & \mbox { if } u-u_*<-k, \\
0, & \mbox { if } |u-u_*| \le k, \\
u-u_*-k, &\mbox { if } u-u_* >k.
\end{cases}
$$
It turns out that
$$
Dv= (Du-Du_*)\cdot 1_{A_k},
$$
where $1_E(x)$ is the characteristic function over the set $E$, that is, $1_E(x) =1$ for $x\in E$ and $1_E(x)=0$ otherwise. Now we consider
$w =u-v$, and we have
$$
w \in u_* +W_0 ^{1,1} (\Omega) \ \ \mbox { and } \ \ Dw =Du \cdot 1 _{\Omega \setminus A_k} +Du_*\cdot  1 _{A_k}.
$$
Since $u$ and $u_*$ have finite energy, then the above inequality tells us that $w $ has finite energy too:
$$
f(x,Dw(x)) \in L^1(\Omega).
$$
Thus $w \in \cal C$ and we can use minimality (\ref{minimallity}):
$$
\begin{array}{llll}
&\displaystyle \int_{\Omega \setminus A_k} f(x,Du)dx +\int_{A_k} f(x,Du)dx =\int_{\Omega } f(x,Du)dx\\
\le &\displaystyle \int_{\Omega } f(x,Dw)dx = \int_{\Omega \setminus A_k} f(x,Du)dx +\int_{A_k} f(x,Du_*)dx.
\end{array}
$$
Since $u$ and $u_*$ have finite energy, all the integrals are finite; then we can drop $\int_{\Omega \setminus A_k} f(x,Du)dx$ from both sides and we get
$$
\int_{A_k} f(x,Du)dx \le \int_{A_k} f(x,Du_*)dx.
$$
This inequality can be used in (\ref{3.7}) and we get
\begin{equation}\label{3.8}
\begin{array}{llll}
&& \displaystyle
\int_{A_k} g^p (|Du-Du_*|) dx \\
&\le &\displaystyle  \frac {2^{ \mu p}}\nu  \int_{A_k}  ( f(x,Du_*) +a) dx\\
&\le &\displaystyle \frac {2^{ \mu p}}\nu \|f(x,Du_*)+a\|_{L^\sigma (\Omega)} |A_k| ^{1-\frac 1 \sigma},
\end{array}
\end{equation}
where we used H\"older inequality (\ref{Holder}). Since $u, u_*\in \cal C$, then $f(x,Du), f(x,Du_*) \in L^1(\Omega)$, (\ref{nonstandard growth condition}) tells us that
$g(|Du|), g(|Du_*|)$ $ \in L^p(\Omega)$. We use Lemma \ref{lemma 2} and we have $g(|u|), g(|u_*|) \in L^{p*} (\Omega)$.
Then (\ref{condition for g-1}) together with the convexity property of $g$ implies
\begin{equation}\label{No.1}
\begin{array}{llll}
\displaystyle g(|u-u_*|) &\le &\displaystyle  g \left(2 \frac {|u|+|u_*|}{2}\right)  \le 2 ^\mu g \left (\frac {|u|}{2} +\frac {|u_*|}{2}\right)\\[3mm]
& \le &\displaystyle 2 ^{\mu -1} \left(g(|u|) +g(|u_*|)\right) \in L^{p*} (\Omega).
\end{array}
\end{equation}
Similarly,
\begin{equation}\label{No.2-1}
\begin{array}{llll}
\displaystyle g(|D(u-u_*)|) &\le &\displaystyle  g \left(2 \frac {|Du|+|Du_*|}{2}\right) \le 2 ^\mu g \left (\frac {|Du|}{2} +\frac {|Du_*|}{2}\right)\\[3mm]
& \le &\displaystyle 2 ^{\mu -1} \left(g(|Du|) +g(|Du_*|)\right) \in L^{p} (\Omega).
\end{array}
\end{equation}
(\ref{No.1}), (\ref{No.2-1}) combined with (\ref{new inequality}) and (\ref{condition for g-3}) imply
\begin{equation}\label{No.2}
\begin{array}{llll}
&\displaystyle |Dg((|u-u_*|-k)_+)|\\[3mm]
=&\displaystyle  g'((|u-u_*|-k)_+) |D((|u-u_*|-k)_+)|  \\[3mm]
\le &\displaystyle g'((|u-u_*|-k)_+) (|u-u_*|-k)_+\\[3mm]
  &\displaystyle + g'(|D((|u-u_*|-k)_+)| ) |D((|u-u_*|-k)_+)| \\[3mm]
\le &\displaystyle \mu \left[ g((|u-u_*|-k)_+) + g(|D((|u-u_*|-k)_+)| )\right] \in L^p(\Omega).
\end{array}
\end{equation}
(\ref{No.1}) and (\ref{No.2}) tell us that
$$
g\left( (|u-u_*|-k )_+\right) \in W_0^{1,p} (\Omega),
$$
which allows us to use Sobolev inequality
\begin{equation}\label{Sobolev inequality}
u\in W_0^{1,p} (\Omega) \Longrightarrow \|u\| _{L^{p^*}(\Omega)} \le {\cal S} \|Du\|_{L^p(\Omega)}, \ \ p\ge 1, \ {\cal S} ={\cal S} (n,p),
\end{equation}
and we have, for $h> k>0$,
\begin{equation}\label{3.12}
\begin{aligned}
 & \int _{A_{k} }| Dg ((| u-u_*|-k)_+)|^pdx \\
=&\int _{\Omega }  | Dg (  ( |  u-u_{\ast }| -k   )_{+  }    )   |^{p}dx \\
\ge & {\cal S}^{-p} \left( \int_{A_{k}}  ( g (  ( |  u -u_{\ast }| -k   )_{+  }    ))^{p^{\ast } }  dx \right )^{\frac{p}{p^{\ast } } } \\
\ge& {\cal S}^{-p}  \left( \int _{A_{h} }  ( g (  ( |  u -u_{\ast } | -k   )_{+  }    ))^{p^{\ast } } dx  \right )^{\frac{p}{p^{\ast } } }\\
\ge&{\cal S}^{-p}  \left( \int _{A_{h} } ( g ( h -k   ))^{p^{\ast } }  dx  \right)^{\frac{p}{p*} }\\
= & {\cal S}^{-p}  g^{p}  ( h-k  )  | A_{h}   |^{\frac{p}{p^{\ast } } }.
\end{aligned}
\end{equation}
In order to estimate the left hand side of (\ref{3.12}), we use again Sobolev inequality, (\ref{No.2}) and (\ref{3.8}), then
\begin{equation}\label{3.13}
\begin{aligned}
&\int _{A_{k} } |Dg((|u-u_*|-k)_+)|^pdx \\
\le & 2^{p-1}\mu^p \left[ \int_{A_{k}}  g^p((|u-u_*|-k)_+) dx +  \int_{A_{k}} g^p (|D((|u-u_*|-k)_+)| ) dx \right] \\
=& 2^{p-1}\mu^p \left[ \int_{\Omega}  g^p((|u-u_*|-k)_+) dx +  \int_{A_k} g^p (|D((|u-u_*|-k)_+)| ) dx \right].
\end{aligned}
\end{equation}
The first term in the right hand side of the above inequality can be estimated by using H\"older inequality,
\begin{equation}\label{3.14}
\begin{aligned}
& \int_{\Omega}  g^p((|u-u_*|-k)_+) dx \\
\le & {{\cal C}_*} \left(\int_\Omega | Dg((|u-u_*|-k)_+)|^{p_*}dx \right)^{\frac {p}{p_*}} \\
= & {{\cal C}_*} \left(\int_{A_k} | Dg((|u-u_*|-k)_+)|^{p_*}dx \right)^{\frac {p}{p_*}} \\
\le & {{\cal C}_*} \left[ \left(\int_{A_k} | Dg((|u-u_*|-k)_+)|^{p}dx \right )^ {\frac {p_*}{p}} \left(\int_{A_k} dx \right)
^{\frac {p-p_*}{p}} \right]^{\frac {p}{p_*}} \\
=& {{\cal C}_*} \int _{A_{k} }|Dg((|u-u_*|-k)_+) |^p dx \cdot |A_k| ^{\frac {p-p_*}{p_*}},
\end{aligned}
\end{equation}
where $p_*=\frac {np}{n+p}$, ${{\cal C}_*} $ is a constant depending only on $n,p$. There exists a constant $k_0>0$ such that for all $k\ge k_0$,
\begin{equation}\label{choice of k0}
2^{p-1}\mu^p{{\cal C}_*} |A_k| ^{\frac {p-p_*}{p_*}} <\frac 1 2.
\end{equation}
For this choice of $k_0$ and (\ref{3.14}), the first term in the right hand side of (\ref{3.13})
 can be absorbed by the left hand side, thus (\ref{3.13}) implies
\begin{equation}\label{3.15}
\int _{A_{k} } |Dg((|u-u_*|-k)_+)|^pdx \le c\int_{A_k} g^p (|Du-Du_*| ) dx.
\end{equation}
Combining (\ref{3.12}), (\ref{3.15}) and (\ref{3.8}), we arrive at
\begin{equation*}
|A_h| \le \frac {c}{g^{p*}(h-k)} |A_k| ^{\left(1-\frac 1 \sigma\right) \frac {p*}{p}}, \ \ \forall k\ge k_0,
\end{equation*}
where $c$ is a constant depending only on $n,p,\mu,\nu$ and $\|f(x,Du_*)+a_1\|_{L^\sigma (\Omega)}$, and $k_0$ satisfies (\ref{choice of k0}).
Therefore, (\ref{stampacchia_assumption generalized-generalization-new}) holds true for
$$
\varphi(k)=|A_k|, \ \alpha =p^* \ \mbox { and } \ \beta =\left(1-\frac 1 \sigma\right) \frac {p^*}{p}.
$$
We use Corollary \ref{Stampacchia Lemma, Gao Zhang and Ma generalized,generalized-new} and we have

\vspace{2mm}

{\bf Case 1}: $\sigma >\frac{n}{p}$. In this case $\beta >1$ and we use Corollary \ref{Stampacchia Lemma, Gao Zhang and Ma generalized,generalized-new}-(i):
there exists a constant $L$ such that
$$
|A_{2L}|=|\{|u-u_*|>2L\}| =0,
$$
thus $|u-u_*|\le 2L$, a.e. $\Omega$.

\vspace{2mm}

{\bf Case 2}: $\sigma =\frac{n}{p}$. In this case $\beta =1$ and we use Corollary \ref{Stampacchia Lemma, Gao Zhang and Ma generalized,generalized-new}-(ii):  for any $k\ge k _0$,
\begin{equation} \label{statement_stampacchia_beta=1 new new}
|\{ |u-u_*|>k\}| =\varphi (k) \le \varphi (k_0) e ^{1-\frac {k-k_0}{\tilde \tau}} =\varphi (k_0) e^{1+\frac {k_0}{\tilde \tau}} e ^{-\frac {k}{\tilde \tau}}:=\bar C e ^{-\frac {k}{\tilde \tau}},
\end{equation}
where $\tilde \tau$ is the constant satisfying (\ref{c2.3 new new}) and (\ref{tau,generalized-new new}). (\ref{statement_stampacchia_beta=1 new new}) implies
\begin{equation*}\label{12.13-1}
\left|\left\{ e ^{\frac {|u-u_*|}{2 \tilde \tau}} > e ^{\frac k {2\tilde \tau}}\right\}\right| =|\{|u-u_*|>k\}| \le \bar C e ^{-\frac {k}{\tilde \tau}}.
\end{equation*}
Denote $\tilde k =e ^{\frac k {2\tilde \tau}}$, then the above inequality is equivalent to
\begin{equation*}\label{12.13-1}
\left|\left\{ e ^{\frac {|u-u_*|}{2 \tilde \tau}} > \tilde k \right\}\right| \le \frac {\bar C} {\tilde k ^2}, \ \ \  \forall \tilde k \ge \tilde k_0 = e ^{\frac {k_0}{2 \tilde \tau}}.
\end{equation*}
%
%
We use Lemma 3.11 in \cite{Boccardo-Croce}, which states that a necessary and sufficient condition for $g\in L^r (\Omega)$, $r\ge 1$, is
$$
\sum_{k=1} ^\infty k^{r-1} |\{ |g|>k\}| <\infty.
$$
We use the above lemma with $g=e ^{\frac {|u-u_*|}{2 \tilde \tau}} $ and $r=1$, since
$$
\sum_{\tilde k = [\tilde k _0]+1 } ^\infty \left| \left\{e ^{\frac {|u-u_*|}{2 \tilde \tau}}  >\tilde k \right\} \right|
 \le \sum_{\tilde k =[\tilde k _0]+1} ^\infty \frac {\bar C}{\tilde k ^2} =\hat C <\infty,
$$
then
$$
\sum_{\tilde k =1} ^\infty \left| \left\{e ^{\frac {|u-u_*|}{2 \tilde \tau}}  >\tilde k \right\} \right|
= \left( \sum_{\tilde k =1} ^{[\tilde k_0]} +\sum_{\tilde k =[\tilde k_0]+1 } ^\infty \right)
\left| \left\{e ^{\frac {|u-u_*|}{2 \tilde \tau}}  >\tilde k \right\} \right| \le [\tilde k_0]|\Omega|+\hat C  <\infty,
$$
that is,
$$
e ^{\frac {|u-u_*|}{2 \tilde \tau}}  \in L^1(\Omega).
$$


%

\vspace{2mm}

{\bf Case 3}: $\sigma <\frac{n}{p}$. In this case $0< \beta <1$ and we use Corollary \ref{Stampacchia Lemma, Gao Zhang and Ma generalized,generalized-new}-(iii):
\begin{equation}\label{the last one-12}
|\{ |u-u_*|>k\}| \le c \left(\frac 1 {g(k)}\right) ^{\frac {np\sigma}{n-p\sigma}}, \ \ \forall k\ge k_0.
\end{equation}
For $0<k<k_0$, one has
\begin{equation}\label{the last one-2}
|\{ |u-u_*|>k\}| \le |\Omega| \le |\Omega| g(k_0)^{\frac {np\sigma}{n-p\sigma}} \left(\frac 1 {g(k)}\right)^{\frac {np\sigma}{n-p\sigma}}.
\end{equation}
(\ref{the last one-12}) and (\ref{the last one-2}) imply, for all $k>0$,
$$
|\{ |u-u_*|>k\}| \le \max \left\{c, |\Omega| g(k_0)^{\frac {np\sigma}{n-p\sigma}} \right\} \left(\frac 1 {g(k)}\right)^{\frac {np\sigma}{n-p\sigma}}.
$$
The above inequality is equivalent to
\begin{equation*}\label{the last one-1}
|\{g( |u-u_*|)>k\}| \le \max \left\{c, |\Omega| g(k_0)^{\frac {np\sigma}{n-p\sigma}} \right\} \left(\frac 1 {k}\right) ^{\frac {np\sigma}{n-p\sigma}}, \ \ \forall k>0,
\end{equation*}
then the desired result $g(|u-u_*|) \in L_w ^{\frac {np\sigma}{n-p\sigma}} (\Omega)$ follows, completing the proof of Theorem \ref{theorem 3.1}.
\qed

\section{The second generalization and applications.}
\noindent In this section we give the second generalization of Lemma \ref{Stampacchia Lemma, Gao Zhang and Ma generalized}, that is, (\ref{stampacchia_assumption-2}) is replaced by (\ref{the second condition}) with the function $g$ the one introduced in Section 1, and give an application to degenerate elliptic equations.
We prove the following

\begin{lemma}\label{Stampacchia Lemma, generalization}  Let $g:[0,+\infty)\rightarrow [0,+\infty)$ be a function satisfying the assumptions (${\cal G}_1$), (${\cal G}_2$) and (${\cal G}_3$); let $  c, \alpha, \beta,k_0$ be positive constants and $\theta \ge 0$; let
$\varphi: [k_0,+\infty) \rightarrow [0, +\infty)$ be nonincreasing and such that
\begin{equation}\label{stampacchia_assumption 22}
\varphi (h) \le \frac {c g ^{\theta \alpha } (h)}{(h-k)^\alpha}
[\varphi (k)]^\beta
\end{equation}
\noindent
for every $h,k$ with $h>k\ge k_0 >0$.
It results that:

{\bf (i)} if $\beta > 1$ and $\theta < 1$
\begin{equation}\label{19.123}
\lim_{L\rightarrow +\infty} \frac {g^\theta (L)}{L} =0,
\end{equation}
then
$$
\varphi (2L) =0,
 $$
where $L\ge 2k_0$ satisfies
\begin{equation}\label{the value of L-2}
  \frac{g^{\theta }(L) }{L} \le c^{-\frac{1}{\alpha } }2^{-\frac{1}{\beta }(\mu \theta
  	+  \beta +\frac{1}{\beta -1} ) }  [\varphi (k_{0} )]^{\frac{1-\beta }{\alpha } } .
\end{equation}

{\bf(ii)} if $\beta = 1$, $\mu \theta <1$,  and there exists $\tilde \theta >\theta $ such that
\begin{equation}\label{19.2}
\lim _{L\rightarrow +\infty} \frac {g^{\tilde \theta } (L)}{L} =0,
\end{equation}
then for any $k\ge k_0$,
\begin{equation*} \label{generalied statement_stampacchia_beta=1}
\varphi (k) \le \varphi(k_0) e ^{1-\left(\frac {k-k_0}{\tau}\right) ^{1-\frac {\theta }{\tilde \theta}}} ,
\end{equation*}
where $\tau \ge \max \left \{ k_{0} ,\frac{1}{2}  \right \}$ is large enough such that
\begin{equation}\label{tau111}
\frac {c g^{\theta \alpha } (k_0+\tau)}{\tau ^\alpha} \le \frac 1 e
\end{equation}
and
\begin{equation}\label{tau11}
 \frac {c\left(2 ^{\frac {\tilde \theta }{\tilde \theta -\theta}+1 } \right)^{\mu \theta \alpha}\tau ^{(\mu \theta -1)\alpha}  }{\left(\frac { \tilde \theta }{\tilde \theta -\theta}\right) ^\alpha } \left(\max _{s\ge 1}\frac {g^\theta \left(s ^{\frac {\tilde \theta }{\tilde \theta -\theta }}\right)}{s^{\frac {\theta }{\tilde \theta -\theta }}} \right)^\alpha <\frac 1 e;
\end{equation}

{\bf(iii)} if $0< \beta < 1$, $0\le \theta <1$, and there exists $0< \epsilon _0 <1-\theta$ such that
\begin{equation}\label{tau11111}
\lim _{k\rightarrow +\infty} \frac {g'(k)}{g ^{1-\theta -\epsilon _0}(k)} =0,
\end{equation}
then for any $k\ge k_0$,
\begin{equation*}\label{statement_stampacchia_beta<1}
\varphi (k) \le  2 ^{\frac {\mu \epsilon_0\alpha(2-\beta) }{(1-\beta )^2}} \left(T ^{\frac 1 {1-\beta}}+\varphi (k_0)g^{\frac {\epsilon_0\alpha}{1-\beta}}(k_0) \right) \left(\frac 1 {g(k)}\right) ^{\frac {\epsilon_0 \alpha}{1-\beta}} ,
\end{equation*}
where
$$
T= c 2 ^{\mu\theta \alpha} \left[\max _{k\ge k_0} \left(\frac {g'(k)}{g ^{1-\theta-\epsilon _0}(k)}\right) \right]^\alpha.
$$
\end{lemma}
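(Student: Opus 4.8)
The plan is to treat the three ranges $\beta>1$, $\beta=1$ and $0<\beta<1$ separately, and in each case to massage the hypothesis (\ref{stampacchia_assumption 22}) into an inequality of a type already settled in the proof of Theorem \ref{Stampacchia Lemma, Gao Zhang and Ma generalized,generalized}. The genuinely new feature here is the factor $g^{\theta\alpha}(h)$ in the numerator (in place of a pure power of $h$): after using (${\cal G}_2$) to replace the argument $h$ by something comparable with $h-k$, or with the level index in the $\beta=1$ iteration, one is left with a coefficient that must be shown to be finite, and this finiteness is exactly what the growth restrictions (\ref{19.123}), (\ref{19.2}) and (\ref{tau11111}) provide.

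For $\beta>1$, I would fix $L>k_0$, choose levels $t_i=2L(1-2^{-i-1})$ and set $x_i=\varphi(t_i)$, so that $L\le t_i<2L$ and $t_{i+1}-t_i=L2^{-i-1}$. Putting $k=t_i$, $h=t_{i+1}$ in (\ref{stampacchia_assumption 22}) and using $g^{\theta\alpha}(t_{i+1})\le g^{\theta\alpha}(2L)\le 2^{\mu\theta\alpha}g^{\theta\alpha}(L)$, which is (\ref{condition for g-1}) with $\lambda=2$, gives $x_{i+1}\le C\,B^{i}\,x_i^{\beta}$ with $C=c\,2^{(\mu\theta+1)\alpha}g^{\theta\alpha}(L)L^{-\alpha}$, finite by (\ref{19.123}), and $B=2^{\alpha}$. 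Lemma \ref{Stampacchia Lemma proof need} then forces $\varphi(2L)=\lim_i x_i=0$ as soon as $\varphi(L)\le C^{-1/(\beta-1)}B^{-1/(\beta-1)^2}$. Taking $k=k_0$, $h=L\ge 2k_0$ (so $L-k_0\ge L/2$) in (\ref{stampacchia_assumption 22}) bounds $\varphi(L)$ by $c\,2^{\alpha}g^{\theta\alpha}(L)L^{-\alpha}[\varphi(k_0)]^{\beta}$, and a short rearrangement identifies the required smallness of $\varphi(L)$ with condition (\ref{the value of L-2}) on $g^{\theta}(L)/L$.

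For $\beta=1$, I would imitate the $\beta=1$ part of the proof of Theorem \ref{Stampacchia Lemma, Gao Zhang and Ma generalized,generalized}, but with the slower levels $k_s=k_0+\tau\,s^{\tilde\theta/(\tilde\theta-\theta)}$. Since $\tilde\theta/(\tilde\theta-\theta)>1$ (because $\theta<\tilde\theta$), convexity of $t\mapsto t^{\tilde\theta/(\tilde\theta-\theta)}$ gives $k_{s+1}-k_s\ge \tau\,\frac{\tilde\theta}{\tilde\theta-\theta}\,s^{\theta/(\tilde\theta-\theta)}$, while $k_{s+1}\le 2^{\tilde\theta/(\tilde\theta-\theta)+1}\tau\,s^{\tilde\theta/(\tilde\theta-\theta)}$ for $s\ge1$, so (${\cal G}_2$) (using $\tau\ge 1/2$ to keep the dilation factor larger than $1$) bounds $g^{\theta\alpha}(k_{s+1})$ by $(2^{\tilde\theta/(\tilde\theta-\theta)+1}\tau)^{\mu\theta\alpha}g^{\theta\alpha}(s^{\tilde\theta/(\tilde\theta-\theta)})$. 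Feeding these into (\ref{stampacchia_assumption 22}) with $\beta=1$, $k=k_s$, $h=k_{s+1}$, all the $s$-dependence collapses into the factor $\big(g^{\theta}(s^{\tilde\theta/(\tilde\theta-\theta)})/s^{\theta/(\tilde\theta-\theta)}\big)^{\alpha}$; the substitution $u=s^{\tilde\theta/(\tilde\theta-\theta)}$ turns this into $\big(g^{\tilde\theta}(u)/u\big)^{\theta\alpha/\tilde\theta}$, which is bounded by (\ref{19.2}), so the supremum appearing in (\ref{tau11}) is finite, and since $\mu\theta<1$ the factor $\tau^{(\mu\theta-1)\alpha}$ decays, making (\ref{tau11}) attainable for $\tau$ large. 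Hence $\varphi(k_{s+1})\le e^{-1}\varphi(k_s)$ for $s\ge1$ by (\ref{tau11}), while the step $s=0$ is exactly (\ref{tau111}); iterating and then interpolating over $k_0+\tau(s-1)^{\tilde\theta/(\tilde\theta-\theta)}\le k<k_0+\tau s^{\tilde\theta/(\tilde\theta-\theta)}$ yields the stated decay with exponent $1-\theta/\tilde\theta$. I expect this case to be the main obstacle: the extra parameter $\tilde\theta$ and the exponent bookkeeping, and in particular the finiteness of $\max_{s\ge1}g^{\theta}(s^{\tilde\theta/(\tilde\theta-\theta)})/s^{\theta/(\tilde\theta-\theta)}$, are where the work concentrates; the other two cases are essentially adaptations of the proof of Theorem \ref{Stampacchia Lemma, Gao Zhang and Ma generalized,generalized}.

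For $0<\beta<1$, taking $h=2k$ in (\ref{stampacchia_assumption 22}) and using $g(2k)\le 2^{\mu}g(k)$ gives $\varphi(2k)\le c\,2^{\mu\theta\alpha}\,\frac{g^{\theta\alpha}(k)}{k^{\alpha}}[\varphi(k)]^{\beta}$, so the task is to estimate $g^{\theta+\epsilon_0}(k)/k$. Writing $g(k)=g(k)-g(0)=g'(\xi)k$ for some $\xi\in(0,k)$ and using that $g'$ is non-decreasing, one gets $\frac{g^{\theta+\epsilon_0}(k)}{k}=\frac{g'(\xi)}{g^{1-\theta-\epsilon_0}(k)}\le\frac{g'(k)}{g^{1-\theta-\epsilon_0}(k)}$, which is bounded on $[k_0,+\infty)$ by (\ref{tau11111}); hence $\frac{g^{\theta\alpha}(k)}{k^{\alpha}}\le\big(\max_{t\ge k_0}\frac{g'(t)}{g^{1-\theta-\epsilon_0}(t)}\big)^{\alpha}g^{-\epsilon_0\alpha}(k)$ and therefore $\varphi(2k)\le \frac{T}{g^{\epsilon_0\alpha}(k)}[\varphi(k)]^{\beta}$ with $T$ as in the statement. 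This is precisely inequality (\ref{22.2}) in the proof of Theorem \ref{Stampacchia Lemma, Gao Zhang and Ma generalized,generalized} with $c_3$ replaced by $T$ and $\tilde\alpha$ by $\epsilon_0\alpha$, so from here I would carry over that proof verbatim: the substitution $\varphi(k)=\psi(k)\,T^{1/(1-\beta)}g(k)^{-\epsilon_0\alpha/(1-\beta)}$, the recursion $\psi(2k)\le 2^{\mu\epsilon_0\alpha/(1-\beta)}\psi^{\beta}(k)$ (via (\ref{condition for g-1})), and the geometric summation over $2^{s-1}k_0\le k<2^{s}k_0$, which together give the claimed estimate.
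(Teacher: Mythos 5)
Your proposal reproduces the paper's proof essentially verbatim in all three cases: the same dyadic levels $t_i=2L(1-2^{-i-1})$ and application of the iteration lemma for $\beta>1$, the same slower levels $k_s=k_0+\tau s^{\tilde\theta/(\tilde\theta-\theta)}$ with the substitution $u=s^{\tilde\theta/(\tilde\theta-\theta)}$ to reduce the finiteness of the supremum to (\ref{19.2}) for $\beta=1$, and the same bound $g(k)\le g'(k)k$ leading to $\varphi(2k)\le T\,g^{-\epsilon_0\alpha}(k)[\varphi(k)]^\beta$ followed by the $\psi$-substitution and geometric summation for $0<\beta<1$. The only deviation is cosmetic (convexity of $t^{\tilde\theta/(\tilde\theta-\theta)}$ in place of Taylor's formula for the level-difference lower bound), which gives the identical estimate.
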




\begin{remark}
We remark that (\ref{the value of L-2}) holds true for sufficiently large $L$ because of (\ref{19.123}).
\end{remark}

\noindent {\bf Proof of Lemma \ref{Stampacchia Lemma, generalization}}.

(i) For $\beta >1$ we fix $L>k_0$ and choose levels
$$
k_i = 2L (1-2^{-i-1}), \ \  i=0,1,2,\cdots.
$$
It is obvious that $k_0<L\le k_i <2L$ and $\{ k_i \}$ be an increasing sequence. We choose in (\ref{stampacchia_assumption 22})
$$
k=k_i,\  h=k_{i+1}, \ x_i = \varphi (k_i) \  \mbox { and  }  \ x_{i+1} = \varphi (k_{i+1}),
$$
and noticing that
$$
h-k = k_{i+1}-k_i=L 2 ^{-i-1},
$$
we have, for $i=0,1,2,\cdots, $
$$
\begin{aligned}
x_{i+1} & \le \frac {c g^{\theta \alpha}( 2L (1-2^{-i-2})) }{ (L2^{-i-1}) ^\alpha} x_i^\beta
\le \frac {c g^{\theta \alpha}( 2L)}{ (L2^{-i-1}) ^\alpha} x_i^\beta \\
& \le \frac {c 2 ^{\mu \theta \alpha }g^{\theta \alpha}(L)}{ (L2^{-i-1}) ^\alpha} x_i^\beta
= c 2 ^{(\mu \theta +1) \alpha} \left(\frac {g^\theta (L)}{L}\right) ^\alpha 2 ^{i\alpha} x_i^\beta ,
\end{aligned}
$$
where we used again (\ref{condition for g-1}). 
Thus (\ref{generalized Stampachia lemma proof need}) holds true with 
$$
C=c 2 ^{(\mu \theta +1) \alpha} \left(\frac {g^\theta (L)}{L}\right) ^\alpha \  \ \mbox { and } \  \ B=2^\alpha.
$$
We get from Lemma \ref{Stampacchia Lemma proof need}
\begin{equation}\label{limit 1}
\lim _{i\rightarrow +\infty} x_i =0
\end{equation}
provided that
\begin{equation}\label{condition 1}
x_0 =\varphi (k_0) =\varphi (L) \le \left(c 2 ^{(\mu \theta +1) \alpha} \left(\frac {g^\theta (L)}{L}\right) ^\alpha\right) ^{-\frac 1 {\beta -1}} \left(2 ^\alpha \right)  ^{-\frac 1 {(\beta -1)^2 }} .
\end{equation}
Note that (\ref{limit 1}) implies
$$
\varphi (2L) =0.
$$
Let us check condition (\ref{condition 1}) and determine the value of $L$. (\ref{condition 1}) is equivalent to
\begin{equation}\label{limit 2}
\varphi (L) \le c^{-\frac 1 {\beta -1}} 2 ^ {-\frac {\alpha}{\beta-1} (\mu \theta +1+\frac 1 {\beta -1}) } \left(\frac {g^\theta (L)}{L}\right) ^{-\frac \alpha {\beta -1}}.
\end{equation}
In (\ref{stampacchia_assumption 22}) we take $k=k_0$ and $h=L\ge 2k_0$ (which is equivalent to $L-k_0 \ge \frac L 2$) and we have
$$
\varphi (L) \le \frac {cg^{\theta \alpha } (L)}{(L-k_0)^\alpha } [\varphi (k_0)] ^\beta \le c 2 ^\alpha \left( \frac { g^{\theta}(L)}{L} \right)^\alpha [\varphi (k_0)] ^\beta.
$$
Then condition (\ref{limit 2}) would be satisfied if $L\ge 2k_0 $ and
$$
c 2 ^\alpha \left( \frac { g^{\theta}(L)}{L} \right)^\alpha [\varphi (k_0)] ^\beta \le c ^{-\frac 1 {\beta -1}} 2 ^ {-\frac {\alpha}{\beta-1} (\mu \theta +1+\frac 1 {\beta -1}) } \left(\frac {g^\theta (L)}{L}\right) ^{-\frac \alpha {\beta -1}}.
$$
The above inequality is equivalent to (\ref{the value of L-2}).

(ii)  Let  $\beta =1$ and $\tau$ satisfies (\ref{tau111}) and (\ref{tau11}). For $s=0,1,2,\cdots, $ we let
$$
k_s =k_0+ \tau  s^{\frac {\tilde \theta} {\tilde \theta-\theta }},
$$
then $\{k_s\}$ is an increasing sequence and
$$
k_{s+1}-k_s =\tau \left[ (s+1) ^{\frac {\tilde \theta} {\tilde \theta-\theta }} -s^{\frac {\tilde \theta} {\tilde \theta-\theta }}\right].
$$
We use Taylor's formula in order to get
\begin{equation}\label{estimate 10}
k_{s+1}-k_s =\tau \left[\frac {\tilde \theta }{\tilde \theta -\theta}s ^{\frac {\theta }{\tilde \theta -\theta}} +\frac {\tilde \theta \theta}{2!(\tilde \theta -\theta)^2} \xi ^{\frac {2 \theta -\tilde \theta }{\tilde \theta -\theta}}
\right]\ge \frac {\tau \tilde \theta }{\tilde \theta -\theta}s ^{\frac {\theta }{\tilde \theta -\theta}} ,
\end{equation}
where $\xi$ lies in the open interval $(s,s+1)$. In (\ref{stampacchia_assumption 22}) we take $\beta =1$, $k=k_s$ and $h=k_{s+1}$, we use (\ref{estimate 10}) and we get, for $s\ge 1$,
\begin{equation}\label{estimate 11}
\begin{aligned}
\varphi (k_{s+1}) &\le \frac {c g ^{ \theta  \alpha}  \left(k_0 +\tau (s+1)^{\frac {\tilde \theta} {\tilde \theta- \theta }}\right)}{\left( \frac {\tau \tilde \theta }{\tilde \theta -\theta} \right)^\alpha s ^{\frac {\theta \alpha}{\tilde \theta -\theta}}} \varphi (k_s) \\
 &\le \frac {c g ^{ \theta  \alpha}  \left(2\tau (2s)^{\frac {\tilde \theta} {\tilde \theta- \theta }}\right)}{\left( \frac {\tau \tilde \theta }{\tilde \theta -\theta}\right)^\alpha s ^{\frac {\theta \alpha}{\tilde \theta -\theta}}} \varphi (k_s) \\
& \le \frac {c \left(2 ^{\frac {\tilde \theta }{\tilde \theta -\theta}+1 } \tau \right)^{\mu \theta \alpha} g^{\theta \alpha }\left(s^ \frac {\tilde \theta }{\tilde \theta -\theta }\right) }{\left(\frac {\tau \tilde \theta }{\tilde \theta -\theta}\right) ^\alpha  s ^{\frac {\theta \alpha }{\tilde \theta -\theta }}} \varphi(k_s).
\end{aligned}
\end{equation}
We next show that, for any $s \ge 1$,
$$
\frac {g^\theta \left(s ^{\frac {\tilde \theta }{\tilde \theta -\theta }}\right)}{s^{\frac {\theta }{\tilde \theta -\theta }}}
$$
is finite. In fact, by (\ref{19.2}),
$$
\lim_{s\rightarrow +\infty} \frac {g^\theta \left(s ^{\frac {\tilde \theta }{\tilde \theta -\theta }}\right)}{s^{\frac {\theta }{\tilde \theta -\theta }}}
=\lim_{\tilde s\rightarrow +\infty} \frac {g^\theta (\tilde s )}{\tilde s ^{\frac {\theta}{\tilde \theta}}} =
\left( \lim_{\tilde s\rightarrow +\infty} \frac {g^{\tilde \theta} (\tilde s )}{\tilde s }  \right)^{\frac {\theta }{\tilde \theta}} =0.
$$
(\ref{estimate 11}) and (\ref{tau11}) ensure
$$
\varphi (k_{s+1}) \le \frac {c \left(2 ^{\frac {\tilde \theta }{\tilde \theta -\theta}+1 } \right)^{\mu \theta \alpha}\tau ^{(\mu \theta -1)\alpha}  }{\left(\frac { \tilde \theta }{\tilde \theta -\theta}\right) ^\alpha } \left(\max _{s\ge 1}\frac {g^\theta \left(s ^{\frac {\tilde \theta }{\tilde \theta -\theta }}\right)}{s^{\frac {\theta }{\tilde \theta -\theta }}} \right)^\alpha  \varphi(k_s)\le \frac 1 e \varphi (k_s).
$$
By recursion,
\begin{equation}\label{tau1111}
\varphi (k_s) \le \frac 1 {e^s } \varphi (k_0),\ \ s=1,2,\cdots.
\end{equation}
In (\ref{stampacchia_assumption 22}) we take $k=k_0$ and $h=k_0+\tau$, (\ref{tau111}) ensures (\ref{tau1111}) holds true also for $s=0$.

For any $k\ge k_0$, there exists $s\in \{1,2,\cdots\}$ such that
$$
k_0 +\tau (s-1) ^{\frac {\tilde \theta } {\tilde \theta- \theta }}  \le k <  k_0 +\tau s ^{\frac {\tilde \theta } {\tilde \theta- \theta }} .
$$
Thus, considering $\varphi (k)$ is nonincreasing, using (\ref{tau1111}), one obtains
$$
\varphi (k) \le \varphi \left( k_0 +\tau (s-1) ^{\frac {\tilde \theta } {\tilde \theta- \theta }} \right)=\varphi (k_{s-1})
\le e^{1-s} \varphi (k_0)\le \varphi(k_0) e ^{1-\left(\frac {k-k_0}{\tau}\right) ^{1-\frac {\theta }{\tilde \theta}}}.
$$

(iii) For the case $0<\beta <1$, let us take $h=2k$ in (\ref{stampacchia_assumption 22}) and we get for every $k\ge k_0> 0$,
\begin{equation}\label{tau111111}
\varphi(2k) \le \frac {cg^{\theta \alpha} (2k)}{k^\alpha} [\varphi (k)] ^\beta \le \frac {c2 ^{\mu \theta \alpha}g^{\theta \alpha} (k)}{k^\alpha} [\varphi (k)] ^\beta .
\end{equation}
Since
$$
g(k) =g(k) -g(0) =g'(\xi) k \le g'(k)k,
$$
where $\xi \in (0,k)$, then (\ref{tau111111}) yields
$$
\begin{aligned}
\varphi(2k) & \le \frac {c2 ^{\mu \theta \alpha} (g'(k))^{\alpha}}{g^{(1-\theta )\alpha}(k)} [\varphi (k)] ^\beta \\
& =c 2 ^{\mu \theta \alpha} \left(\frac {g'(k)}{g^{1-\theta -\epsilon _0} (k)} \right)^\alpha \frac {1}{g^{\epsilon_0 \alpha} (k)} [\varphi (k)]^\beta \\
& \le c 2 ^{\mu \theta \alpha} \left[\max _{k\ge k_0}\frac {g'(k) }{g^{1-\theta -\epsilon _0} (k)} \right]^\alpha \frac {1}{g^{\epsilon_0 \alpha} (k)} [\varphi (k)]^\beta\\
& =\frac {T}{g^{\epsilon_0\alpha }(k)} [\varphi (k)] ^\beta,
\end{aligned}
$$
where we have used (\ref{tau11111}), which ensures the term in the square bracket is finite. Let us introduce a new function $\psi (k): [k_0,+\infty) \rightarrow [0,+\infty]$ such that
$$
\varphi (k ) =\psi(k) \frac {T^{\frac 1 {1-\beta}}}{g^{\frac {\epsilon_0 \alpha }{1-\beta}} (k)},
$$
then we get
\begin{equation}\label{19-3}
\psi (2k)\le \left(\frac {g(2k)}{g(k)}\right)^{\frac {\epsilon_0 \alpha}{1-\beta}} \psi ^\beta (k)\le 2 ^{\frac {\mu \epsilon _0 \alpha}{1-\beta}} \psi ^\beta (k), \ \ k\ge k_0.
\end{equation}
For any $k\ge k_0$, one can find a natural number $s\ge 1 $ such that
$$
2^{s-1} k_0 \le k < 2 ^s k_0,
$$
for such a $k$ one has
\begin{equation}\label{2.9-9}
\varphi (k)\le \varphi (2^{s-1}k_0)=\psi (2^{s-1}k_0) \frac {T^{\frac 1 {1-\beta}}}{g^{\frac {\epsilon_0\alpha}{1-\beta}}(2^{s-1} k_0) }.
\end{equation}
(\ref{19-3}) implies, for $s\ge 1$,
\begin{equation}\label{2.10-9}
\begin{array}{llll}
 \displaystyle \psi (2^{s-1}k_0) &\le & \displaystyle  2 ^{\frac {\mu \epsilon_0\alpha}{1-\beta } \sum \limits_{i=0}^{s-2}\beta ^i}  \psi ^{\beta ^{s-1}} (k_0)
\le 2 ^{\frac {\mu \epsilon_0\alpha}{(1-\beta )^2}}(1+\psi (k_0)) \\
 & \le & \displaystyle  2 ^{\frac {\mu \epsilon_0\alpha }{(1-\beta )^2}} \left(1+\varphi (k_0)g^{\frac {\epsilon_0\alpha}{1-\beta}}(k_0) T^{\frac 1 {\beta-1}}  \right).
\end{array}
\end{equation}
Substituting (\ref{2.10-9}) into (\ref{2.9-9}) we arrive at
\begin{equation*}\label{2.12}
\begin{aligned}
\varphi (k) & \le  2 ^{\frac {\mu \epsilon_0\alpha }{(1-\beta )^2}} \left(1+\varphi (k_0)g^{\frac {\epsilon_0\alpha}{1-\beta}}(k_0) T^{\frac 1 {\beta-1}}  \right)\frac {T^{\frac 1 {1-\beta}}}{g^{\frac {\epsilon_0\alpha}{1-\beta}}(2^{s-1} k_0) } \\
&= 2 ^{\frac {\mu \epsilon_0\alpha }{(1-\beta )^2}} \left(T ^{\frac 1 {1-\beta}}+\varphi (k_0)g^{\frac {\epsilon_0\alpha}{1-\beta}}(k_0) \right) \left(\frac 1 {g(2 ^{s-1} k_0)}\right) ^{\frac {\epsilon_0 \alpha}{1-\beta}}\\
&\le 2 ^{\frac {\mu \epsilon_0\alpha (2-\beta) }{(1-\beta )^2}} \left(T ^{\frac 1 {1-\beta}}+\varphi (k_0)g^{\frac {\epsilon_0\alpha}{1-\beta}}(k_0) \right) \left(\frac 1 {g(k)}\right) ^{\frac {\epsilon_0 \alpha}{1-\beta}} .
\end{aligned}
\end{equation*}
This completes the proof of Lemma \ref{Stampacchia Lemma, generalization}.

\vspace{4mm}

We now consider a special case: $g(t) =t\ln (e+t)$. We have the following corollary of Lemma \ref{Stampacchia Lemma, generalization}.

\begin{corollary}\label{Stampacchia Lemma, generalization-2}  
Let $  c, \alpha, \beta,k_0$ be positive constants and $\theta \ge 0$; let
$\varphi: [k_0,+\infty) \rightarrow [0, +\infty)$ be nonincreasing
and such that
\begin{equation}\label{stampacchia_assumption 222}
\varphi (h) \le \frac {c h^{\theta \alpha } \ln ^{\theta \alpha} (e+h)}{(h-k)^\alpha}
[\varphi (k)]^\beta
\end{equation}
\noindent
for every $h,k$ with $h>k\ge k_0 >0$. It results that:

{\bf (i)} if $\beta > 1$ and $0\le \theta <1$, then
$$
\varphi (2L) =0,
 $$
where $L\ge 2k_0$ satisfies
\begin{equation*}\label{the value of L}
\frac {\ln ^\theta (e+L)}{L^{1-\theta}}  \le c^{-\frac{1}{\alpha } }2^{-\frac{1}{\beta }(\mu \theta
  	+  \beta +\frac{1}{\beta -1} ) }  [\varphi (k_{0} )]^{\frac{1-\beta }{\alpha}}.
\end{equation*}

{\bf(ii)} if $\beta = 1$, $\mu \theta <1$, then for any $\theta <\tilde \theta <1$ and any $k\ge k_0$,
\begin{equation*} \label{generalied statement_stampacchia_beta=1}
\varphi (k) \le \varphi(k_0) e ^{1-\left(\frac {k-k_0}{\tau}\right) ^{1-\frac {\theta }{\tilde \theta}}} ,
\end{equation*}
where $\tau \ge \max \left \{ k_{0} ,\frac{1}{2}  \right \}$ is large enough satisfying
\begin{equation*}\label{tau111-2}
\frac {c(k_0+\tau)^{\theta \alpha} \ln ^{\theta\alpha} (e+\tau+ k_0)}{\tau ^\alpha} \le \frac 1 e
\end{equation*}
and
\begin{equation*}\label{tau11-2}
 \frac {c \left(2 ^{\frac {\tilde \theta }{\tilde \theta -\theta}+1 } \right)^{\mu \theta \alpha}\tau ^{(\mu \theta -1)\alpha}  }{\left(\frac { \tilde \theta }{\tilde \theta -\theta}\right) ^\alpha } \left(\max _{s\ge 1} \frac {\ln ^\theta \left( e+s ^{\frac {\tilde \theta }{\tilde \theta -\theta}}\right)}{s ^{\frac {\theta (1-\tilde \theta )}{\tilde \theta -\theta }}}\right)^\alpha <\frac 1 e;
\end{equation*}

{\bf(iii)} if $0< \beta < 1$, $0\le \theta <1$, then for any $0<\epsilon_0 <1-\theta$ and  for any $k\ge k_0$,
\begin{equation*} \label{statement_stampacchia_beta<1}
\varphi (k) \le 2 ^{\frac {\mu \epsilon_0 \alpha (2-\beta)}{1-\beta}} \left(T^{\frac 1 {1-\beta}} +\varphi (k_0)k_0^{\frac {\epsilon_0\alpha}{1-\beta}} \ln^{\frac {\epsilon_0\alpha}{1-\beta}} (e+k_0)\right) \left(\frac 1 {g(k)} \right) ^{\frac {\epsilon_0\alpha}{1-\beta}},
\end{equation*}
where
$$
T= c2 ^{\mu\theta \alpha} \left[\max _{k\ge k_0} \frac{\ln(e+k)+\frac{k}{e+k} }{k^{1-\theta -\epsilon_0}\ln^{1-\theta -\epsilon_0} (e+k)}\right]^\alpha.
$$
\end{corollary}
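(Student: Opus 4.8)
The plan is to obtain this corollary as the special case $g(t)=t\ln(e+t)$ of Lemma~\ref{Stampacchia Lemma, generalization}. As noted in Section~1 around (\ref{condition for g-2})--(\ref{t1.2}), this $g$ is $C^1$, convex, non-decreasing, with $g(0)=0$, $g(t)>0$ for $t>0$, it satisfies (\ref{condition for g-1}) with $\mu=2$, and $g'(0_+)=1>0$; hence $({\cal G}_1)$, $({\cal G}_2)$, $({\cal G}_3)$ all hold. Since $g^{\theta\alpha}(h)=h^{\theta\alpha}\ln^{\theta\alpha}(e+h)$, hypothesis (\ref{stampacchia_assumption 222}) is literally (\ref{stampacchia_assumption 22}) for this $g$. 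So the only work is to verify, case by case, the auxiliary decay conditions required by Lemma~\ref{Stampacchia Lemma, generalization}, and then to rewrite the constants occurring there in terms of $\ln(e+\cdot)$.

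For part (i), with $\beta>1$ and $0\le\theta<1$, I would check (\ref{19.123}): $g^\theta(L)/L=\ln^\theta(e+L)/L^{1-\theta}\to 0$ as $L\to+\infty$, since $1-\theta>0$ and a logarithm is dominated by any positive power. Then Lemma~\ref{Stampacchia Lemma, generalization}(i) gives $\varphi(2L)=0$ for any $L\ge 2k_0$ satisfying (\ref{the value of L-2}), and substituting $g^\theta(L)/L=\ln^\theta(e+L)/L^{1-\theta}$ into (\ref{the value of L-2}) produces exactly the stated inequality for $L$.

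For part (ii), with $\beta=1$ and $\mu\theta<1$ (here $\mu=2$, so $\theta<\tfrac12$), I would fix any $\tilde\theta\in(\theta,1)$ and check (\ref{19.2}): $g^{\tilde\theta}(L)/L=\ln^{\tilde\theta}(e+L)/L^{1-\tilde\theta}\to 0$. Lemma~\ref{Stampacchia Lemma, generalization}(ii) then yields $\varphi(k)\le\varphi(k_0)e^{1-((k-k_0)/\tau)^{1-\theta/\tilde\theta}}$; inserting $g(k_0+\tau)=(k_0+\tau)\ln(e+k_0+\tau)$ into (\ref{tau111}) and computing $g^\theta(s^{\tilde\theta/(\tilde\theta-\theta)})/s^{\theta/(\tilde\theta-\theta)}=\ln^\theta(e+s^{\tilde\theta/(\tilde\theta-\theta)})/s^{\theta(1-\tilde\theta)/(\tilde\theta-\theta)}$ (the power of $s$ moving to the denominator with exponent $\tfrac{\theta}{\tilde\theta-\theta}-\tfrac{\theta\tilde\theta}{\tilde\theta-\theta}=\tfrac{\theta(1-\tilde\theta)}{\tilde\theta-\theta}>0$) turns the two smallness requirements on $\tau$ into the displayed ones; the indicated $\max_{s\ge1}$ is finite because the quotient is continuous on $[1,+\infty)$ and tends to $0$ as $s\to+\infty$ by (\ref{19.2}).

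For part (iii), with $0<\beta<1$ and $0\le\theta<1$, I would fix any $\epsilon_0\in(0,1-\theta)$ and verify (\ref{tau11111}): with $g'(k)=\ln(e+k)+k/(e+k)=O(\ln k)$ and $g^{1-\theta-\epsilon_0}(k)=k^{1-\theta-\epsilon_0}\ln^{1-\theta-\epsilon_0}(e+k)$ growing like a positive power of $k$ (times a log), the quotient tends to $0$, so $\max_{k\ge k_0}g'(k)/g^{1-\theta-\epsilon_0}(k)<\infty$. Lemma~\ref{Stampacchia Lemma, generalization}(iii) then applies, and substituting these expressions into $T$ and $g(k_0)=k_0\ln(e+k_0)$ into the prefactor gives the claimed estimate. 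Throughout, the computations are routine; the only thing to keep track of is that $\mu=2$ is used consistently and that each limit and maximum is genuinely finite, which is exactly what the ranges $\theta<1$, $\mu\theta<1$, $\epsilon_0<1-\theta$ guarantee, so there is no real obstacle beyond the bookkeeping of constants.
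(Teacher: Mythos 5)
Your proposal is correct and follows exactly the route the paper intends: Corollary~\ref{Stampacchia Lemma, generalization-2} is nothing more than Lemma~\ref{Stampacchia Lemma, generalization} specialized to $g(t)=t\ln(e+t)$ with $\mu=2$, and the only genuine verification needed is that the auxiliary limit conditions (\ref{19.123}), (\ref{19.2}), (\ref{tau11111}) hold in the stated parameter ranges, which you check correctly; the paper itself gives no separate proof beyond Remark~4.2, which records precisely the fact you verify for part (iii). One small bookkeeping point you glossed over: in part (iii) the corollary's prefactor is written with exponent $\frac{\mu\epsilon_0\alpha(2-\beta)}{1-\beta}$, whereas a literal substitution into Lemma~\ref{Stampacchia Lemma, generalization}(iii) produces $\frac{\mu\epsilon_0\alpha(2-\beta)}{(1-\beta)^2}$, so the corollary as printed has a typographical slip in that exponent rather than a different (stronger) bound, and your claim that the substitution ``gives the claimed estimate'' should really read ``gives the estimate with $(1-\beta)^2$ in the exponent, matching the lemma.''
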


\begin{remark}
In Corollary \ref{Stampacchia Lemma, generalization-2}-(iii), we used the fact that, for $g(t)=t\ln (e+t)$, (\ref{tau11111}) holds true for any $0<\epsilon_0 <1-\theta$.
\end{remark}

\vspace{4mm}

As an application of Corollary \ref{Stampacchia Lemma, generalization-2} we are interested in the following degenerate elliptic problem
\begin{equation}\label{BVP}
\begin{cases}
-\mbox{div} (a(x,u(x)) Du (x)) =f(x), & \mbox { in } \Omega, \\
u(x)=0,  & \mbox { on } \partial \Omega,
\end{cases}
\end{equation}
here $\Omega$ is a bounded, open subset of $\mathbb R^n$, $n>2$, and $a(x,s):\Omega \times \mathbb R \rightarrow \mathbb R$ is a Carath\'eodory function (that is, measurable with respect to $x$ for every $s\in \mathbb R$
and continuous with respect to $s$ for almost every $x\in \Omega$) satisfying the following conditions
\begin{equation}\label{conditon for a(x,s)}
\frac {\alpha}{(1+|s|)^\theta \ln ^\theta (e+|s|)} \le a (x,s) \le \beta,
\end{equation}
for some positive number $\theta \ge 0$, for almost every $x\in \Omega$ and every $s\in \mathbb R$, where $\alpha$ and $\beta$ are positive constants. As far as the source $f$ is concerned, we assume
\begin{equation*}\label{condition for f}
f\in L_{w} ^m(\Omega), \  \ m>(2^*)'  =\frac {2n}{n+2}.
\end{equation*}

\begin{definition}
A function $u\in W_0 ^{1,2} (\Omega)$ is a solution to (\ref{BVP}) if for any $v\in W_0^{1,2} (\Omega)$,
\begin{equation}\label{weak solution}
\int_\Omega a(x,u) Du Dv dx =\int_\Omega f v dx.
\end{equation}
\end{definition}

We note that, because of assumption (\ref{conditon for a(x,s)}), the differential operator
$$
-\mbox {div} (a(x,v)Dv)
$$
is not coercive on $W_0^{1,2} (\Omega)$, even if it is well defined between $W_0^{1,2} (\Omega)$ and its dual, see the last chapter in the monograph \cite{Boccardo-Croce}. We mention that, for $f\in L^m(\Omega)$ and (\ref{conditon for a(x,s)}) is replaced by
$$
\frac {\alpha}{(1+|s|)^\theta } \le a (x,s) \le \beta,
$$
Boccardo, Dall'Aglio
and Orsina derived in \cite{BDO} some existence and regularity results of (\ref{BVP}), see also the last chapter in \cite{Boccardo-Croce}.


We prove the following

\begin{theorem}\label{theorem 1-24}
Let $f\in L_w ^m (\Omega)$, $m>(2^*)'$, and $u\in W_0^{1,2} (\Omega)$ be a solution to (\ref{BVP}). Then

(i) $m>\frac n 2, 0\le \theta <1 \Rightarrow \exists L=L(n,\alpha,\beta, \mu, \|f\| _{L_{w} ^m (\Omega)},m, |\Omega|)>0$ such that $|u| \le 2L$;

(ii) $m =\frac n 2, 0\le \mu \theta <1,  \Rightarrow \mbox {for any } \rho <1-\theta, \exists \lambda =\lambda (n,\alpha, \|f\| _{L_w ^m (\Omega)},m, \theta, \mu) >0 \mbox { such that }  e^{\lambda |u| ^{\rho}} \in L^1(\Omega) $;

(iii) $ (2^*)'< m<\frac n 2, 0\le \theta <1 \Rightarrow \mbox {for any } \varrho <\frac {\alpha (1-\theta)}{1-\beta} =\frac {nm}{n-2m} (1-\theta)=m^{**} (1-\theta), \mbox { one has }  |u| \ln (e+|u|)\in L_w ^{\varrho} (\Omega)$.
\end{theorem}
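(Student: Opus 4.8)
The plan is to combine a coercivity-restoring change of the unknown with Corollary \ref{Stampacchia Lemma, generalization-2}, applied to $g(t)=t\ln(e+t)$ (for which $\mu=2$ and $g'(0_+)=1$). Writing $A_k=\{x\in\Omega:|u(x)|>k\}$ for $k>0$, the three conclusions will all follow once $\varphi(k):=|A_k|$ is shown to satisfy an inequality of the form (\ref{stampacchia_assumption 222}). First I would introduce the primitive $\Phi(t)=\mathrm{sign}(t)\int_0^{|t|}(1+s)^{-\theta}\ln^{-\theta}(e+s)\,ds$, which is odd, $C^1$, strictly increasing, with $\Phi(0)=0$ and $0<\Phi'(t)=(1+|t|)^{-\theta}\ln^{-\theta}(e+|t|)\le1$; hence $w:=\Phi(u)\in W_0^{1,2}(\Omega)$, and since $0\le\theta<1$ the defining integral diverges at $+\infty$, so $\Phi:[0,+\infty)\to[0,+\infty)$ is a bijection. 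The point of the substitution is that $\Phi'$ is exactly the weight in the lower bound of (\ref{conditon for a(x,s)}): testing (\ref{weak solution}) with $v=G_k(w)=(|w|-k)_+\,\mathrm{sign}(w)\in W_0^{1,2}(\Omega)$, whose gradient is $Dw\cdot 1_{B_k}$ with $B_k=\{|w|>k\}$, and using $a(x,u)\ge\alpha\,\Phi'(u)$ a.e., gives
\[
\alpha\int_{B_k}|Dw|^{2}\,dx\le\int_{B_k}a(x,u)\,\Phi'(u)\,|Du|^{2}\,dx=\int_\Omega a(x,u)\,Du\cdot Dv\,dx=\int_{B_k}f\,G_k(w)\,dx,
\]
so the degeneracy is absorbed and the left-hand side is now coercive.

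Next I would run the classical Stampacchia iteration on the sets $B_k$. Sobolev's inequality gives $\|G_k(w)\|_{L^{2^*}}\le{\cal S}\big(\int_{B_k}|Dw|^{2}\big)^{1/2}$; Hölder's inequality together with the Marcinkiewicz bound (\ref{Holder}) applied to $|f|^{(2^*)'}\in L_w^{m/(2^*)'}$ (legitimate because $m>(2^*)'$) gives $\int_{B_k}f\,G_k(w)\le C\|f\|_{L_w^m}|B_k|^{1/(2^*)'-1/m}\|G_k(w)\|_{L^{2^*}}$; combining these and using $|G_k(w)|\ge h-k$ on $B_h$ for $h>k$ yields
\[
|B_h|\le\frac{c_0}{(h-k)^{2^*}}\,|B_k|^{\beta},\qquad \beta=2^{*}-1-\frac{2^{*}}{m},
\]
with $c_0$ depending only on $n,\alpha,\beta,\mu,m,\|f\|_{L_w^m},|\Omega|$. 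Finally I would undo the substitution: $|B_k|=|A_{\Phi^{-1}(k)}|$, so with $\kappa=\Phi^{-1}(k)$ and $\eta=\Phi^{-1}(h)$, the monotonicity of $\Phi'$ gives $\Phi(\eta)-\Phi(\kappa)\ge(\eta-\kappa)(1+\eta)^{-\theta}\ln^{-\theta}(e+\eta)$, and together with $(1+\eta)\le2\eta$ for $\eta\ge1$ the preceding display becomes
\[
\varphi(\eta)\le\frac{c\,\eta^{\theta\cdot 2^{*}}\ln^{\theta\cdot 2^{*}}(e+\eta)}{(\eta-\kappa)^{2^{*}}}\,[\varphi(\kappa)]^{\beta}\qquad\text{for }\eta>\kappa\ge k_0:=1,
\]
which is precisely (\ref{stampacchia_assumption 222}) with $\alpha=2^{*}$ and the same $\theta,\beta$.

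It then only remains to read off the three regimes from Corollary \ref{Stampacchia Lemma, generalization-2} according to the sign of $\beta-1$. If $m>\frac n2$ then $\beta>1$, and part (i) (whose hypothesis $0\le\theta<1$ also makes $\Phi$ onto) gives $|A_{2L}|=0$ for a suitable $L$, i.e. $|u|\le2L$. If $m=\frac n2$ then $\beta=1$, and part (ii) (hypothesis $\mu\theta<1$) gives $|A_k|\le\varphi(k_0)e^{1-((k-k_0)/\tau)^{1-\theta/\tilde\theta}}$ for every $\tilde\theta\in(\theta,1)$; inserting this into the series criterion of Lemma 3.11 in \cite{Boccardo-Croce} shows that $e^{\lambda|u|^{\rho}}\in L^1(\Omega)$ for every $\rho<1-\theta$ and a suitable $\lambda>0$. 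If $(2^{*})'<m<\frac n2$ then $0<\beta<1$, and part (iii) gives $|A_k|\le C\,g(k)^{-\epsilon_0\cdot 2^{*}/(1-\beta)}$ for every $\epsilon_0\in(0,1-\theta)$; since $2^{*}/(1-\beta)=\frac{nm}{n-2m}=m^{**}$, this reads $|\{g(|u|)>\lambda\}|\le C\lambda^{-\epsilon_0 m^{**}}$, so letting $\epsilon_0\uparrow1-\theta$ gives $|u|\ln(e+|u|)=g(|u|)\in L_w^{\varrho}(\Omega)$ for every $\varrho<(1-\theta)m^{**}$.

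The one genuine obstacle is the non-coercivity forced by (\ref{conditon for a(x,s)}): testing directly with $G_k(u)$ would put on the left a weighted Dirichlet energy of a function whose $L^{2^{*}}$-norm cannot be controlled by that same energy, so the estimate would not close. Passing to $w=\Phi(u)$ with $\Phi'$ equal to the degenerate weight cures this, but at the price of the factor $\eta^{\theta\cdot 2^{*}}\ln^{\theta\cdot 2^{*}}(e+\eta)$ in the iteration inequality, whose absorption is exactly what the generalized Stampacchia lemma of Section 4 is designed to do; a secondary point to watch is that the step from $f\in L_w^m$ to $|B_k|^{1/(2^{*})'-1/m}$ relies on the strict inequality $m>(2^{*})'$.
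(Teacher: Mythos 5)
Your proof is correct, but it takes a genuinely different route from the paper's. The paper tests (\ref{weak solution}) directly with the \emph{bounded} function $T_{h-k}(G_k(u))$, bounds the degenerate weight $a(x,u)\ge\alpha(1+|u|)^{-\theta}\ln^{-\theta}(e+|u|)$ from below on the annulus $B_{k,h}=\{k<|u|\le h\}$ by its value at level $h$, and then applies Sobolev and the Marcinkiewicz--H\"older inequality (\ref{Holder}) to $f$ alone; this yields an inequality of type (\ref{stampacchia_assumption 222}) on the superlevel sets of $|u|$ with Stampacchia exponents $\alpha=\frac{2^*}{2}$, $\beta=\frac{2^*}{2m'}$, and no change of unknown is made. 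You instead introduce the Boccardo-type change of unknown $w=\Phi(u)$, with $\Phi'$ precisely the degenerate weight, so the problem becomes uniformly elliptic in $w$; you then test with the \emph{unbounded} $G_k(w)$, pair $f$ and $G_k(w)$ through the $L^{(2^*)'}$--$L^{2^*}$ duality and Sobolev, obtain the classical Stampacchia inequality for the superlevel sets of $|w|$ with exponents $\alpha=2^*$, $\beta=2^*-1-\frac{2^*}{m}$, and transfer it back to level sets of $|u|$ via $\Phi(\eta)-\Phi(\kappa)\ge(\eta-\kappa)\Phi'(\eta)$. Although your $\alpha$ and $\beta$ differ from the paper's, the quantities that decide each regime coincide: the sign of $\beta-1$ changes at $m=\frac{n}{2}$ under both normalizations, and $\frac{\alpha}{1-\beta}=\frac{nm}{n-2m}=m^{**}$ in both, so the three conclusions come out identically. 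Your approach has the advantage of restoring coercivity up front and making the role of $0\le\theta<1$ (surjectivity of $\Phi$) transparent; the paper's is more direct in that it avoids both the change of unknown and the back-transfer of the level-set inequality, at the modest cost of carrying the degenerate weight through the Sobolev step.
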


We now use Corollary \ref{Stampacchia Lemma, generalization-2} to prove Theorem \ref{theorem 1-24}.

\vspace{3mm}

\noindent {\it Proof of Theorem \ref{theorem 1-24}.} Denote $G_k(u) =u-T_k(u)$.
For $h>k >0$, we take
$$
v =T_{h-k} (G_k (u))
$$
as a test function in (\ref{weak solution}) and we have, by (\ref{conditon for a(x,s)}), that
\begin{equation}\label{the first inequality}
\alpha \int_{B_{k,h}} \frac {|Du|^2}{ (1+|u|)^\theta \ln ^\theta (e+|u|)} dx \le \int _{A_k} fT_{h-k} (G_k(u))dx,
\end{equation}
where
$$
B_{k,h} =\{x\in \Omega: k<|u| \le h\}, \ \ A_k =\{x\in \Omega: |u|>k\}.
$$
We now estimate both sides of (\ref{the first inequality}). The left hand side can be estimated as
\begin{equation}\label{the left hand side}
\begin{array}{llll}
&\displaystyle \alpha \int_{B_{k,h}} \frac {|Du|^2}{ (1+|u|)^\theta \ln ^\theta (e+|u|)} dx \\
\ge &\displaystyle \frac {\alpha}{ (1+h) ^\theta \ln ^\theta (e+h) }\int _{B_{k,h}} |Du|^2dx \\
= &\displaystyle \frac {\alpha}{ (1+h) ^\theta\ln ^\theta (e+h) } \int _{\Omega} |DT_{h-k } (G_k(u))|^2dx \\
\ge &\displaystyle \frac {\alpha {\cal S}^2}{(1+h) ^\theta\ln ^\theta (e+h) } \left(\int_\Omega \left|T_{h-k} (G_k(u))\right|^{2^*}dx \right) ^{\frac 2 {2^*}} \\
\ge &\displaystyle \frac {\alpha {\cal S}^2}{(1+h) ^\theta\ln ^\theta (e+h) }\left(\int_{A_h} \left|T_{h-k} (G_k(u))\right|^{2^*}dx \right) ^{\frac 2 {2^*}} \\
= &\displaystyle \frac {\alpha {\cal S}^2}{(1+h) ^\theta\ln ^\theta (e+h) } (h-k) ^2 |A_h| ^{\frac 2 {2^*}},
\end{array}
\end{equation}
where we used again the Sobolev inequality (\ref{Sobolev inequality}).
Using H\"older inequality (\ref{Holder}), the right hand side can be estimated as
\begin{equation}\label{the right hand side}
\int_{A_k}  fT_{h-k} (G_k(u)) dx \le (h-k) \int_{A_k } |f| dx \le (h-k) B  |A_k| ^{\frac 1 {m'}},
\end{equation}
where $B$ is a constant depending on $\|f\|_{L_{w}^m (\Omega)}$ and $m$. (\ref{the first inequality}) together with (\ref{the left hand side}) and (\ref{the right hand side}) implies, for $h>k\ge 1$,
$$
|A_h| \le \frac {c_{4}  (1+h) ^{\frac {\theta 2^*}{2}} \ln  ^{\frac {\theta 2^*}{2}} (e+h)}{ (h-k) ^{\frac {2^*}{2}}}
|A_k| ^{\frac {2^*}{2m'}}\le \frac {c_{5} h^{\frac {\theta 2^*}{2}} \ln ^{\frac {\theta 2^*}{2}} (e+h)}{ (h-k) ^{\frac {2^*}{2}}} |A_k| ^{\frac {2^*}{2m'}},
$$
where $c_{4}, c_{5}$ are constants depending on $n,\alpha, \|f\| _{L_{w} ^m (\Omega)},m$. Thus (\ref{stampacchia_assumption 222}) holds true with
$$
\varphi (k) =|A_k|, \ c=c_5, \ \alpha =\frac {2^*}{2},  \ \beta =\frac {2^*}{2m'} \ \mbox { and } \ k_0=1.
$$
We use Corollary \ref{Stampacchia Lemma, generalization-2} and we have:

(i) If $m>\frac n 2$, then $\beta>1$. In case of $0\le \theta <1$, we use Corollary \ref{Stampacchia Lemma, generalization-2}-(i) and we have $|A_{2L}| =0$ for some constant $L$ depending on $n,\alpha,\beta, \mu, \|f\| _{L_{w} ^m (\Omega)},m$ and $|\Omega|$, from which we derive $|u|\le 2L$ a.e. $\Omega$;

(ii)  If $m=\frac n 2$, then $\beta=1$. For any $\rho <1-\theta$, one can choose $\tilde \theta$ sufficiently close to 1 such that $\rho =1-\frac {\theta}{\tilde \theta}$.
 In case of $\mu \theta <1$,  we use Corollary \ref{Stampacchia Lemma, generalization-2}-(ii) and we derive that there exists a constant $\tau$ depending on $n,\alpha, \|f\| _{L_w ^m (\Omega)},m, \theta$ and $\mu$, such that
$$
|\{|u|>k\}| \le |\{ |u|>1\}| e ^{1-\left(\frac {k-1}{\tau} \right) ^{\rho }} \le |\Omega| e ^{1-\left(\frac {k-1}{\tau} \right) ^{\rho }}.
$$
We let $2^{1+\rho } \lambda =\tau ^{-\rho}$ and $k\ge 2$ ($\Leftrightarrow k-1\ge \frac k 2$) and we have
$$
|\{|u|>k\}| \le |\Omega| e ^{1-2^{1+\rho }\lambda (k-1)^{\rho }} \le |\Omega| e ^{1-2^{1+\rho }\lambda \left(\frac k 2 \right) ^{\rho}}=c_6 e ^{-2\lambda k ^{\rho}}, \ k\ge 2,
$$
where $c_6= |\Omega| e $. Hence
$$
|\{e ^{\lambda |u| ^{\rho}} >e ^{\lambda k ^{\rho}}\}| = |\{ |u| >k\}| \le c _6 e ^{-2\lambda k ^{\rho}}.
$$
Let $ \tilde k =e ^{\lambda k ^{\rho}}$, then
$$
|\{e ^{\lambda |u| ^{\rho}} >  \tilde  k \}| \le \frac {c_6}{  \tilde k ^2}, \ \mbox { for } \  \tilde k \ge e ^{\lambda 2 ^{\rho }}.
$$
Let $\big [e ^{\lambda 2 ^{\rho}}\big] =k_1$.  We now use Lemma 3.11 in \cite{Boccardo-Croce} again for $g= e ^{\lambda |u| ^{\rho}}$ and $r=1$. Since
$$
\sum_{\tilde k=1} ^\infty |\{e ^{\lambda |u| ^{\rho}} >\tilde   k \}|= \left (\sum_{\tilde  k=1} ^ {k_1}+ \sum _{\tilde k=k_1+1} ^\infty\right) |\{e ^{\lambda |u| ^{\rho}} >\tilde   k \}|\le k_1|\Omega| + c _6\sum _{\tilde k=k_1+1} ^\infty \frac 1 {
\tilde  k ^2} <+\infty,
$$
then $e ^{\lambda |u| ^{\rho}} \in L^1 (\Omega)$;

(iii) If $(2^*)'< m<\frac n 2$, then $\beta<1$. For any $\varrho <\frac {\alpha(1-\theta)}{1-\beta} =m^{**} (1-\theta)$, let us take $\epsilon_0$ sufficiently close to $1-\theta$ such that $\varrho =\frac {\alpha \epsilon_0}{1-\beta}$. In case of $0\le \theta <1$ we use Corollary \ref{Stampacchia Lemma, generalization-2}-(iii) and we have for all $k\ge 1$,
$$
|\{|u| >k\}| \le c_7 \left(\frac 1 {g(k)} \right) ^\varrho ,
$$
where $c_7$ is a constant depending on $n,\alpha, \beta, \mu, \|f\| _{L_{w} ^m (\Omega)},m$ and $|\Omega|$.
The above inequality is equivalent to
$$
|\{|g(|u|)| >k\}| \le c_7 \left(\frac 1 k \right) ^\varrho , \ \ \forall k \ge 1.
$$
Thus
$$
|\{|g(|u|)| >k\}| \le \max \{c_7,|\Omega|\} \left(\frac 1 k \right) ^\varrho , \ \ \forall k >0,
$$
that is $g(|u|)\in L_w ^{\varrho } (\Omega)$, as desired.     \qed

\vspace{4mm}

 \noindent {\bf Acknowledgments:} The corresponding author is grateful for the support from the NSFC (12071021)
and the Key Science and Technology Project of Universities of Hebei Province (ZD2021307).

\rm \footnotesize \baselineskip 9pt

\end{document}